\newtheorem{theorem}{Theorem}[section]
\newtheorem{lemma}[theorem]{Lemma}
\newtheorem{corollary}[theorem]{Corollary}
\newtheorem{conjecture}[theorem]{Conjecture}
\def\ng{\widetilde{\mathsf{g}}}
\def\g{\mathsf{g}}
\def\P{\mathbb{P}}
\def\E{\mathbb{E}}
\def\F{\mathcal{F}}
\def\H{\mathcal{H}}
\def\e{\mathsf{e}}
\def\n{\mathsf{n}}
\def\d{\mathsf{d}_\square}
\def\Q{\mathcal{Q}}
\def\p{\mathsf{p_3}}
\def\w{\mathsf{w_3}}
\def\T{\mathcal{T}}
\def\B{\mathfrak{B}}
\newcommand\NN{\mathbb N}
\newcommand\RR{\mathbb R}
\newcommand\inner[2]{\langle #1, #2 \rangle}
\title[Efficient PTAS for the genus of dense graphs]{Efficient polynomial-time approximation scheme \\ for the genus of dense graphs}
\author{Yifan Jing}
\address{%
Mathematical Institute\\
University of Oxford\\
Oxford, UK}
\email{jing@maths.ox.ac.uk}
\author{Bojan Mohar}
\address{%
Department of Mathematics\\
Simon Fraser University\\
Burnaby, BC, Canada}
\email{mohar@sfu.ca}
\thanks{An extended abstract of the preliminary version of this paper appeared in ``59th IEEE Annual Symposium on Foundations of Computer Science, FOCS 2018, Paris, France, October 7--9, 2018", pp. 719--730.}
\thanks{Supported in part by the NSERC Discovery Grant R832714 (Canada),
and by the Research Project N1-0218 of ARIS (Slovenia).}
\thanks{On leave from IMFM \& FMF, Department of Mathematics, University of Ljubljana.}%
\subjclass[2010]{05C10, 05C80, 05C85, 57M15, 68R10, 68W25}
\date{}
\begin{document}

\begin{abstract}
The main results of this paper provide an Efficient Polynomial-Time Approximation Scheme (EPTAS) for approximating the genus (and non-orientable genus) of dense graphs. By \emph{dense} we mean that $|E(G)|\ge \alpha \, |V(G)|^2$ for some fixed $\alpha>0$. While a constant-factor approximation is trivial for this class of graphs, approximations with factor arbitrarily close to 1 need a sophisticated algorithm and complicated mathematical justification. More precisely, we provide an algorithm that for a given (dense) graph $G$ of order $n$ and given $\varepsilon>0$, returns an integer $g$ such that $G$ has an embedding in a surface of genus $g$, and this is $\varepsilon$-close to a minimum genus embedding in the sense that the minimum genus $\g(G)$ of $G$ satisfies:
$\g(G)\le g\le (1+\varepsilon)\g(G)$. The running time of the algorithm is
$O(f(\varepsilon)\,n^2)$, where $f(\cdot)$ is an explicit function.
Next, we extend this algorithm to also output an embedding (rotation system) whose genus is $g$. This second algorithm is an Efficient Polynomial-time Randomized Approximation Scheme (EPRAS) and runs in time $O(f_1(\varepsilon)\,n^2)$.

Our algorithms are based on the analysis of minimum genus embeddings of quasirandom graphs.
We use a general notion of quasirandom graphs \cite{LoSos}.
We start with a regular partition obtained via an algorithmic version of the Szemer\'edi Regularity Lemma (due to Frieze and Kannan \cite{FK} and to Fox, Lov\'asz, and Zhao \cite{FLZ,FLZ2}).
We then partition the input graph into a bounded number of quasirandom subgraphs, which are preselected in such a way that they admit embeddings using as many triangles and quadrangles as faces as possible. Here we provide an $\varepsilon$-approximation $\nu(G)$ for the maximum number of edge-disjoint triangles in $G$. The value $\nu(G)$ can be computed by solving a linear program whose size is bounded by certain value $f_2(\varepsilon)$ depending only on $\varepsilon$. After solving the linear program, the genus can be approximated (see Corollary \ref{cor:genus partition}). The proof of this result is long and will be of independent interest in topological graph theory.
\end{abstract}

\maketitle

%\tableofcontents

%\newpage

\section{Approximating the genus --- Overview}%

\subsection{The graph genus problem}

For a simple graph $G$, let $\g(G)$ be the {\em genus} of $G$, that is, the minimum $h$ such that $G$ embeds into the orientable surface $\mathbb{S}_h$ of genus $h$, and let $\ng(G)$ be the {\em non-orientable genus} of $G$ which is the minimum $c$ such that $G$ embeds into the non-orientable surface $\mathbb{N}_c$ with crosscap number $c$. If orientability of a surface is not a concern, the notion of the \emph{Euler genus}, which is defined as $\widehat \g(G) = \min\{2\g(G),\tilde \g(G)\}$, can be used. The genus is a natural measure how far is $G$ from being planar. Determining the genus of a graph is one of fundamental problems in graph theory with wide range of applications in computing and design of algorithms. Algorithmic interest comes from the fact that graphs of bounded genus share important properties with planar graphs and thus admit efficient algorithms for many problems that are difficult for general graphs \cite{Verdiere17, AlgsPlanar_Dagstuhl16, Parametrized_Book15}.

The genus of graphs played an important role in the developments of graph theory through its relationship to the \emph{Heawood problem} asking what is the largest chromatic number of graphs embedded in a surface of genus $g$. This problem was eventually reduced to the genus computation for complete graphs and resolved by Ringel and Youngs in 1968, see \cite{Ri74}. Further importance of the genus became evident in the seminal work on graph minors by Robertson and Seymour with developments of structural graph theory that plays an important role in stratification of complexity classes \cite{Parametrized_Book15}.

The \emph{genus problem} is the computational task of deciding whether the genus of a given graph $G$ is smaller than a given integer $k$. The question about its computational complexity was listed among the 12 outstanding open problems in the monograph by Garey and Johnson \cite{GaJo79} in 1979. Half of these problems were resolved by 1981, three of them (including graph isomorphism) are still unresolved, while three of them have been answered with considerable delay. The genus problem was among the latter three problems.  It was resolved in 1989 when Thomassen \cite{Th89} proved that it is NP-complete. Later, Thomassen simplified his proof in \cite{Th93} by showing that the question whether $G$ triangulates an (orientable) surface is NP-complete. In 1997 he also proved that the genus problem for cubic graphs is NP-complete \cite{Th97}. Mohar \cite{Mo01} proved that the genus is NP-complete even if we restrict our attention to \emph{apex graphs}, i.e. graphs which become planar by removing a single vertex.

Measuring graphs by their genus is \emph{fixed parameter tractable}. It follows from the Robertson and Seymour theory of graph minors (and their $O(n^3)$ algorithm for testing $H$-minor inclusion for any fixed graph $H$ \cite{GM13, GM22}) that for every fixed $k$, there is an $O(n^3)$ algorithm for testing whether a given graph $G$ has genus at most $k$. The time complexity in their cubic-time algorithm involves a huge constant depending on $H$, and the algorithm needs the (unknown) list of the forbidden minors for genus $k$. Notably, this is ``an impossible task" since the number of surface obstructions is huge (see, e.g., \cite{Myrvold18} for the up to date results about the surface of genus 1). Moreover, the Robertson--Seymour theory has a non-constructive element. The constants involved in their estimates about forbidden minors are not computable through their results. This deficiency was repaired with the results of Mohar \cite{Mo96,Mo99}, who found a linear-time algorithm for embedding graphs in any fixed surface.  His result generalizes the seminal linear-time algorithms for planarity testing by Hopcroft and Tarjan \cite{HT74} and by Booth and Lueker \cite{BL76}. It also generalizes to any surface the linear time algorithms that actually construct an embedding in the plane \cite{CNAO85} or find a Kuratowski subgraph when an embedding does not exist \cite{Wi84}. Mohar's algorithm gives a constructive proof for the finite number of forbidden minors for surface embeddability. The price paid for this is that the algorithms are complicated and hard to implement. A different linear-time FPT algorithm based on structural graph theory (reducing a graph to have bounded tree-width) has been found by Kawarabayashi, Mohar, and Reed \cite{KaMoRe08}. This algorithm includes as a subroutine a linear-time algorithm for computing the genus of graph of bounded tree-width, which turned out to be a difficult task by itself.

A large body of research has been done on approximating the genus by means of polynomial-time algorithms. Graphs whose genus is $\Theta(n)$ ($n$ being the number of vertices) admit a constant factor approximation algorithm. This is an easy consequence of Euler's formula, see \cite{ChKaKa97}. This case includes graphs whose (average) degree is at least $d$ for some $d>6$.

For graphs of bounded degree, $\Delta = \Delta(G)\le \Delta_0$, other approaches have been found. Chen et al.\ \cite{ChKaKa97} describe an algorithm with approximation factor $O(\sqrt{n})$.  Chekuri and Sidiropoulos \cite{ChSi13} found a polynomial-time algorithm that returns an embedding into a surface whose Euler genus is at most $(\Delta\, \widehat{\g}(G) \log n)^{O(1)}$. Here the approximation factor depends on $\Delta$, polylog factor in $n$ and polynomial factor of the Euler genus itself.

Some other results give additional insight into approximating the genus when the average degree is bounded. For example, the aforementioned paper of Mohar \cite{Mo01} yields a polynomial-time constant factor approximation for the genus of apex graphs (whose maximum degree can be arbitrarily large, but their average degree is less than 8). This result was extended to $k$-apex graphs in \cite{KaSi15}.

Kawarabayashi and Sidiropoulos \cite{KaSi15} removed the dependence on the maximum degree needed in Chekuri and Sidiropoulos approximation. With a very clever approach they were able to design a polynomial-time algorithm that approximates the Euler genus of any graph within a factor of $O(\widehat{\g}^{255}\log^{189}n)$. A corollary of their result is that the genus can be approximated within factor $O(n^{1-\alpha})$ for some constant $\alpha>0$, see \cite{KaSi15}.  A predecessor to this result was published by Makarychev, Nayyeri, and Sidiropoulos \cite{MaNaSi13}, who proved that for a graph $G$ possessing a Hamiltonian path (which, unfortunately, needs to be given as part of the input), one can efficiently approximate the Euler genus within factor $(\g(G) \log n)^{O(1)}$. Here the quality of approximation depends on the orientable genus together with a $polylog(n)$ factor.

A while ago, the second named author outlined an approach to approximating the genus of graphs that splits the problem in four different regimes, where he suspects different kind of results can be obtained and different kind of methods will be used.

\begin{itemize}
\item[(1)]
{\bf Spherical density.} In this regime, we consider graphs with (average) degree at most 6. While this regime is probably most interesting, there is no evidence that there are efficient constant-factor approximation algorithms, so resolving this is the major open problem. Moreover, it has been conjectured by the second named author that approximation in this range is hard.
\begin{conjecture}[Mohar, 2018]
Approximating the genus of cubic graphs is APX-hard.
\end{conjecture}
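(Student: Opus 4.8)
The plan is a gap-preserving gadget reduction of the standard topological-hardness flavour, with the twist that every gadget must be $3$-regular. I would reduce from a bounded-occurrence APX-hard constraint satisfaction problem --- say \textsc{Max-E2-Lin-2} in which each variable appears in at most a constant number of equations (equivalently a bounded-degree signed \textsc{Max-Cut}) --- for which there is a fixed $\delta_0>0$ and an NP-hard gap between instances with $\mathrm{opt}=m$ (all $m$ equations satisfiable) and instances with $\mathrm{opt}\le(1-\delta_0)m$. The goal is a polynomial-time map $I\mapsto G=\Phi(I)$ producing a cubic graph, together with constants $g_0,a,b>0$ depending only on the degree bound, such that the \emph{completeness} bound $\widehat{\g}(G)\le g_0+am-b\,\mathrm{opt}(I)$ and a matching \emph{soundness} bound $\widehat{\g}(G)\ge g_0+am-b\,\mathrm{opt}(I)-\delta_1 m$ both hold for a sufficiently small $\delta_1$. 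Arranging $g_0,am=\Theta(m)$ then converts the additive gap $\delta_0 m$ of the CSP into a multiplicative gap $1+\Omega(1)$ for $\widehat{\g}$, and hence also for $\g$, establishing APX-hardness.

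For each variable $x_i$ one builds a small cubic ``bit gadget'' $H_i$ having exactly two combinatorially distinct minimum-genus embeddings, labelled $0$ and $1$, distinguished by the rotation at a designated interface; rigidity is the subtle point here, because in a cubic graph each vertex has only two rotations, so $H_i$ should be assembled from several copies of a rigid block joined cyclically so that the value of the bit is forced to be globally consistent. For each equation $x_i+x_j=c$ one builds a cubic ``constraint gadget'' $K$ attached to copies of the interfaces of $H_i$ and $H_j$, engineered so that the minimum Euler genus of $H_i\cup H_j\cup K$ takes a fixed value when the chosen bits satisfy the equation and is exactly one larger otherwise. All attachments are along vertex cuts of size at most three, so that $3$-regularity of the ambient graph $G$ is preserved, and $G$ is assembled as a tree of such amalgamations. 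The completeness direction is then routine: starting from an optimal assignment, embed each $H_i$ in the corresponding state and each $K$ in its cheap or expensive embedding according to whether its equation is satisfied, and amalgamate the surfaces; additivity of Euler genus over $1$-amalgamations (Battle--Harary--Kodama--Youngs) together with the elementary $\le$-inequality for $2$- and $3$-amalgamations yields the claimed upper bound.

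The soundness direction is the main obstacle, and I expect it to absorb essentially all the topological work. One must show that \emph{every} embedding of $G$ whose Euler genus is close to $g_0+am-b\,\mathrm{opt}(I)$ can be ``rounded'' into an embedding that respects every gadget --- each $H_i$ in a pure state $0$ or $1$, each $K$ in one of its two canonical embeddings --- at a cost of only $O(1)$ in genus per gadget altered, so that it decodes to an assignment violating $O(\delta_1 m)$ equations. The obstruction is that genus is a genuinely global invariant that is \emph{not} exactly additive over $2$- and $3$-amalgamations: each such amalgamation can save a bounded amount of genus, and across the $\Theta(m)$ amalgamations of the construction these savings could a priori accumulate to $\Omega(m)$ and wipe out the gap. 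What is needed is a quantitative, gap-preserving strengthening of the classical amalgamation inequalities, tailored to the specific gadgets: an \emph{exact} additivity theorem for genus across the size-$\le 3$ cuts used here, exploiting $3$-connectivity (or apex-type structure) of the blocks and the fact that their behaviour at an interface is restricted to finitely many patterns. A plausible route to such a theorem is a face-counting/charging argument via Euler's formula applied gadget by gadget, showing that any local ``saving'' at one cut forces a compensating long face elsewhere, so that the global balance is controlled.

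Finally, two remarks. First, one could instead build on Thomassen's result that deciding whether a cubic graph triangulates a surface is NP-complete, replacing ``genus'' by the total excess face-length $\sum_f(|f|-3)$ --- which by Euler's formula is an affine function of $\widehat{\g}(G)$ --- and reducing from a \textsc{Max}-version of the underlying problem; the $3$-regularity requirement and the same soundness/rigidity difficulty persist. Second, a more accessible partial result within reach would be APX-hardness of the genus for graphs of \emph{bounded degree} (not necessarily $3$-regular) under $\mathrm{P}\neq\mathrm{NP}$, where relaxing the size-$\le 3$ cut restriction leaves much more room for robust gadget design; forcing everything down to cubic graphs is precisely what makes the stated conjecture hard.
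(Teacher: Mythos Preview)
The statement you are addressing is a \emph{conjecture}, not a theorem: the paper explicitly presents it as an open problem attributed to Mohar (2018) and offers no proof. There is therefore no ``paper's own proof'' to compare your proposal against.

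Your proposal is a reasonable-sounding research plan, but it is not a proof, and you are candid about this yourself. The entire argument hinges on the soundness step --- showing that near-optimal embeddings of $G$ can be rounded gadget-by-gadget at bounded cost --- which in turn requires a quantitative additivity theorem for Euler genus across the many size-$\le 3$ cuts in your construction. You correctly identify this as ``the main obstacle'' and note that the savings at each amalgamation could a~priori accumulate to $\Omega(m)$ and destroy the gap; you then sketch only a ``plausible route'' to controlling this. Until that additivity/rigidity lemma is actually established for some concrete choice of gadgets, what you have is an outline of a strategy, not a proof. The conjecture remains open.
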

\item[(2)]
{\bf Bounded (average) degree with hyperbolic density.} In this regime, we consider graphs whose degree (or average degree) is bounded above by a constant $\Delta$ and is bounded below by a smaller constant $\delta>6$. It is easy to see that the genus in this range can be approximated within a constant. However, any improvements of that ``trivial" constant may be hard to obtain. Moreover, the approximation may still be APX-hard.
\item[(3)]
{\bf Intermediate density range.} In this regime, we consider graphs with unbounded average degree and $o(n^2)$ edges. While we expect that possible hardness of approximation from range (2) and PTAS approximability from range (4) could be extended to some extent into this range, it is not clear at all how much this would work.
\item[(4)]
{\bf Dense graphs.} In this regime, we consider \emph{dense graphs}. These are graphs with quadratic number of edges, i.e. $|E(G)|\ge \alpha |V(G)|^2$, where $\alpha>0$ is a constant. This paper provides a complete solution for this case.
\end{itemize}

\subsection{Our results}

The main results of this paper provide an Efficient Polynomial-Time Approximation Scheme (EPTAS) for approximating the genus of dense graphs. A graph is \emph{$\alpha$-dense} if $\e(G):=|E(G)|\ge \alpha n^2$. By saying a graph $G$ is \emph{dense} we mean it to be $\alpha$-dense for some fixed $\alpha>0$. While a constant-factor approximation is trivial for this class of graphs, approximations with factor arbitrarily close to 1 provided in this paper need a sophisticated algorithm and complicated mathematical justification.

Given a (dense) graph $G$ of order $n$ and the allowed approximation error $\varepsilon>0$, we want to find an integer $g$ and an embedding of $G$ into a surface of genus $g$ which is close to a minimum genus embedding, i.e. $\g(G)\le g\le (1+\varepsilon)\g(G)$.  The problem is equivalent to the following one, where the assumption on density is left out.

\begin{quote}
	{\sc Approximating Genus Dense}.\\
	\emph{Input:} A graph $G$ of order $n$ and a real number $\varepsilon>0$.\\
	\emph{Output:} An integer $g$ and either a conclusion that $\g(G)\le g<\varepsilon n^2$, or that $\g(G)\le g\le (1+\varepsilon)\g(G)$.
\end{quote}

To see the equivalence, by Euler's formula, we have
\[
\g(G)=\frac{1}{2}(|E(G)|-|F(G)|-|V(G)|+2),
\]
where $|F(G)|$ is the number of faces in the minimum genus embedding. As each face has at least three edges, one have $3|F(G)| \leq 2|E(G)|$. Suppose $\alpha$ is the edge density of $G$, then $\g(G)\geq (\frac16+o(1))\alpha n^2$. By choosing $\varepsilon<\alpha/6$, we will see that the algorithm always output an integer $g$ with $\g(G)\le g<(1+\varepsilon)\g(G)$.

% It is easy to see that (after appropriate rescaling of $\varepsilon$) this problem is equivalent to the following one, where the assumption on density is left out.

% \begin{quote}
% 	{\sc Approximating Genus Dense}.\\
% 	\emph{Input:} A graph $G$ of order $n$ and a real number $\varepsilon>0$.\\
% 	\emph{Output:} An integer $g$ and either a conclusion that $\g(G)\le g<\varepsilon n^2$, or that $\g(G)\le g\le (1+\varepsilon)\g(G)$.
% \end{quote}

In order to obtain EPTAS for the genus of dense graphs, we outline an algorithm whose time complexity is $O(f(\varepsilon) n^2)$.
%$O(f(\varepsilon) n^{O(1)})$, where $f:\RR^+\to\RR^+$ is an explicit positive function. 
%In fact, the polynomial dependence on $n$ in our algorithm is quadratic.

\begin{theorem}
\label{thm:main algorithm genus}
The problem {\sc Approximating Genus Dense} can be solved in time $O(f(\varepsilon)\,n^2)$, where $f:\RR^+\to\RR^+$ is an explicit positive function.
\end{theorem}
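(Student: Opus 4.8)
The plan is to reduce the genus approximation to two ingredients that are developed in the body of the paper: (i) an algorithmic regularity decomposition of $G$, and (ii) an understanding of minimum genus embeddings of quasirandom graphs, in particular how the genus is controlled by the maximum number of edge-disjoint triangles and quadrangles that can be used as faces. The overall strategy is the dichotomy already visible in the problem statement: either the genus is genuinely small (bounded by $\varepsilon n^2$), in which case we may output an embedding of any genus at most $\varepsilon n^2$ and be done, or the genus is $\Theta(n^2)$, in which case the multiplicative error we can afford is itself of order $\varepsilon n^2$, and this slack is exactly what absorbs the errors introduced by regularity and by rounding the linear program.

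First I would run the Frieze--Kannan / Fox--Lov\'asz--Zhao algorithmic version of the Szemer\'edi Regularity Lemma to obtain, in time $O(f(\varepsilon)\,n^2)$, a regular partition of $V(G)$ into a bounded number $k=k(\varepsilon)$ of parts. From this partition I would extract a bounded collection of quasirandom subgraphs (between pairs of parts, with densities read off from the reduced graph), using the general notion of quasirandomness of Lov\'asz--S\'os. The key structural input, supplied earlier in the paper, is that the minimum genus of each such quasirandom piece is determined up to $o(n^2)$ error by how many of its edges can be packed into triangular and quadrilateral faces; by Euler's formula, maximizing the number of such faces minimizes the genus. This is where the quantity $\nu(G)$ — an $\varepsilon$-approximation to the maximum number of edge-disjoint triangles — enters: I would set up a linear program of size bounded by $f_2(\varepsilon)$ whose variables encode, for each pair (and triple) of parts, how many edges are routed through triangles versus quadrangles versus higher faces, with constraints enforcing consistency across the shared parts. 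Solving this LP (in time depending only on $\varepsilon$) yields a value from which, via Euler's formula applied globally, one reads off the approximate genus $g$; this is the content invoked in Corollary~\ref{cor:genus partition}.

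For the upper bound $g \ge \g(G)$ I would exhibit an actual embedding realizing genus $g$: assemble the near-optimal embeddings of the quasirandom pieces, connecting them along the parts; the number of extra handles needed to glue the pieces, and to handle the irregular pairs and the $o(1)$-fraction of exceptional edges, is $o(n^2)$ and hence can be charged against the $\varepsilon n^2$ budget (rescaling $\varepsilon$ at the outset as the problem statement permits). For the lower bound $g \le (1+\varepsilon)\g(G)$ I would argue that any embedding of $G$ induces embeddings of the pieces no better than the LP optimum up to the same additive error, so no embedding can do substantially better than $g$; when $\g(G) \ge \varepsilon n^2$ the additive $O(\varepsilon n^2)$ error becomes a multiplicative $(1+O(\varepsilon))$ error, and when $\g(G) < \varepsilon n^2$ we fall into the first branch of the output. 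Running times: regularity is $O(f(\varepsilon)n^2)$, extracting quasirandom pieces and building the LP is $O(\mathrm{poly}(k)\,n^2)$, solving the LP is $O(f_2(\varepsilon))$, and constructing the final rotation system is $O(n^2)$; so the total is $O(f(\varepsilon)\,n^2)$ for a suitable explicit $f$.

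The main obstacle I expect is the topological analysis underlying Corollary~\ref{cor:genus partition}: proving that the minimum genus of a quasirandom graph — and, more delicately, of a bounded union of quasirandom graphs glued along a regularity partition — is captured up to $o(n^2)$ by the triangle/quadrangle packing LP. One must show both that near-optimal embeddings of the pieces can be combined without losing more than $o(n^2)$ handles at the interfaces (an upper-bound, constructive argument about merging rotation systems), and that this is essentially forced (a lower-bound argument, presumably via Euler's formula together with the fact that quasirandom bipartite graphs have few short cycles available to serve as faces, so most faces in any embedding have length $\ge 4$, or $\ge 3$, limiting the face count). Handling the exceptional set and the irregular pairs so that their contribution stays $o(n^2)$, uniformly in the (bounded) number of pieces, is the technically delicate point that the paper evidently devotes most of its length to.
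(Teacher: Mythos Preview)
Your proposal is essentially correct and matches the paper's approach: algorithmic regularity partition (via Fox--Lov\'asz--Zhao), the triangle-packing LP (\ref{eq:lp}) on the bounded-size quotient graph, and the genus formula of Corollary~\ref{cor:genus partition}, whose justification rests on the quasirandom embedding Theorems~\ref{thm:genus QR bipartite} and~\ref{thm:genus QR tripartite} (and their multipartite extension, Theorem~\ref{thm:quasimultigenus}).

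Two small corrections are worth flagging. First, you omit the base case: when $n < n_0(\varepsilon)$ the quasirandom genus bounds do not apply, and the paper's algorithm simply computes $\g(G)$ exactly by a known bounded-genus algorithm---this is one of the places where the non-polynomial dependence on $\varepsilon^{-1}$ in $f(\varepsilon)$ hides. Second, for Theorem~\ref{thm:main algorithm genus} the algorithm outputs only the integer $g$ and does \emph{not} construct a rotation system; the inequality $\g(G)\le g$ is certified existentially by the quasirandom results, not algorithmically. Constructing the embedding is exactly the step that forces randomization (via R\"odl nibble for the near-perfect hypergraph matchings) and is deferred to Theorem~\ref{thm:main algorithm embedding}, so your last running-time item ``constructing the final rotation system is $O(n^2)$'' does not belong here.
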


The dependence on $\varepsilon^{-1}$ in the time complexity of our algorithms is super-exponen\-tial. In fact, the value $f(\varepsilon)$ is a tower of exponents of height $O(\varepsilon^{-1})$. There are two steps where non-polynomial dependence on $\varepsilon^{-1}$ occurs. The obvious one is when we apply the Regularity Lemma. The second one appears when treating ``small'' graphs. Namely, $n$ is considered small if it is smaller than $n_0$, where $n_0=n_0(\varepsilon)$ is the bound from Theorem \ref{thm:genus} and it grows very fast in terms of $\varepsilon^{-1}$. This case has linear complexity in terms of $n$ \cite{Mo96,Mo99}, but it involves a large constant factor which is increasing super-exponentially fast with $\varepsilon^{-1}$. This is where we are prevented of designing an FPTAS.

Our second computational result extends the previous one by constructing an embedding whose genus is close to the minimum genus. Formally, we consider the problem:

\begin{quote}
	{\sc Approximate Genus Embedding Dense}.\\
	\emph{Input:} A graph $G$ of order $n$ and a real number $\varepsilon>0$.\\
	\emph{Output:} Rotation system of a 2-cell embedding of $G$, whose genus $g$ is close to $\g(G)$: either $\g(G) \le g \le (1+\varepsilon)\g(G)$, or $\g(G)\le g <\varepsilon n^2$.
\end{quote}

As a solution we provide an Efficient Polynimial-time Randomized Approximation Scheme (EPRAS) \cite{Va03}.

\begin{theorem}
\label{thm:main algorithm embedding}
There is a randomized algorithm for {\sc Approximate Genus Embedding Dense} that returns an embedding of the input graph $G$ of genus $g$, such that either $\g(G) \le g \le (1+\varepsilon)\g(G)$, or $\g(G)\le g <\varepsilon n^2$.  The time spent by the algorithm is $O(f_1(\varepsilon)\,n^2)$, where $f_1:\RR^+\to\RR^+$ is an explicit positive function.
\end{theorem}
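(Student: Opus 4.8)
The plan is to reuse the deterministic algorithm of Theorem~\ref{thm:main algorithm genus} as a black box to obtain the target value $g$ together with all the combinatorial data it produces along the way: a regular partition of $G$, the decomposition of the relevant part of $G$ into a bounded number of quasirandom subgraphs, and the solution of the linear program yielding the approximation $\nu(G)$ for the maximum number of edge-disjoint triangles. The key observation is that the number $g$ returned by that algorithm is \emph{certified} by an embedding of $G$ whose existence follows from Corollary~\ref{cor:genus partition} and Theorem~\ref{thm:genus}, but which is only asserted there, not constructed. Our job is thus to turn that existence proof into an explicit rotation system, and the single non-constructive ingredient of the existence proof is the selection of a near-optimal packing of edge-disjoint triangles (and of the analogous $4$-cycles) inside the quasirandom pieces. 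Replacing that selection by a randomized routine is exactly what downgrades the output from an EPTAS to an EPRAS.

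First I would handle each quasirandom piece $H$ separately. By the structural analysis behind Theorem~\ref{thm:genus}, an almost optimal embedding of $H$ uses roughly $\nu$ triangular faces and a corresponding number of quadrilateral faces, with $\nu$ within $\varepsilon n^2$ of the linear-programming value. To realize this I would run a random greedy (R\"odl nibble) procedure that produces, in time $O(|V(H)|^2)$, a family of edge-disjoint triangles in $H$ covering all but $o(n^2)$ edges with high probability; the super-regularity of $H$ supplies both the concentration and the matching lower bound on the number of triangles. The chosen triangles, together with the analogous $4$-cycle packing, are then assembled into a rotation system of $H$ by processing the vertices one at a time and locally adjusting the cyclic orders so that the prescribed faces appear, after which the few uncovered edges are inserted arbitrarily at a cost of $o(n^2)$ additional genus. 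A Chernoff/Azuma estimate then shows that, with high probability, the genus of this embedding of $H$ is within an additive $\varepsilon n^2$ of the value promised by Theorem~\ref{thm:genus}.

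Next I would splice the piece embeddings together along the bounded-size ``scaffold'' formed by the reduced graph of the regular partition and the edges running between distinct parts, exactly as in the existence proof underlying Corollary~\ref{cor:genus partition}; each gluing step raises the genus by a controlled amount, and since the scaffold has bounded size the total overhead is negligible compared with $\g(G)$ (or is absorbed in the $\varepsilon n^2$ slack when $\g(G)$ is small). Finally, the ``small'' residual part --- the exceptional vertices of the partition and, in the regime $n<n_0(\varepsilon)$, all of $G$ --- is embedded exactly by Mohar's linear-time algorithm for a fixed surface~\cite{Mo96,Mo99}; the relevant surface has bounded Euler genus, so this costs $O(f_1(\varepsilon)\,n)$. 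Altogether the rotation system we output has genus $g'$ with $\g(G)\le g'\le (1+\varepsilon)\g(G)$ or $\g(G)\le g'<\varepsilon n^2$, and $g'$ does not exceed the value $g$ of Theorem~\ref{thm:main algorithm genus}; rescaling $\varepsilon$ by a constant at the outset makes the two bounds match. The deterministic part costs $O(f(\varepsilon)\,n^2)$, the randomized packings cost $O(n^2)$ on each of a bounded number of pieces, and the small-case step costs $O(f_1(\varepsilon)\,n)$, for a total of $O(f_1(\varepsilon)\,n^2)$.

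The main obstacle is the constructive form of the structural theorem: proving that a random greedy (or nibble) triangle packing in a quasirandom piece attains, with high probability, the face count that the purely existential argument of Theorem~\ref{thm:genus} guarantees, and --- more delicately --- that the packed triangles and $4$-cycles can be simultaneously organized into one globally consistent rotation system of the piece without losing that face count to unavoidable local conflicts at high-degree vertices. This is also the step that resists derandomization with the present tools, which is why Theorem~\ref{thm:main algorithm embedding} is stated as an EPRAS; a secondary technical point is bounding the genus increment incurred when the bounded-size scaffold is used to merge the piece embeddings into an embedding of all of $G$.
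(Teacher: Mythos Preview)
Your overall strategy matches the paper's: reuse the regular partition and quasirandom decomposition produced for Theorem~\ref{thm:main algorithm genus}, run a R\"odl--nibble greedy matching on each piece, and glue the pieces by identifying shared vertices at genus cost $O(nK^3)$. But there is a genuine gap at precisely the point you yourself flag as ``the main obstacle'': you do not supply the mechanism that converts a large family of edge-disjoint triangles (or $4$-cycles) into a rotation system in which nearly all of them are faces, and the mechanism the paper uses is not the one you sketch.

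A single near-perfect matching in the triangle hypergraph places each edge in at most \emph{one} triangular face; to approach the triangulation bound you need almost every edge to lie in \emph{two} faces, one on each side. The paper achieves this by (i) randomly orienting $G$ to a digraph $D$, (ii) forming the hypergraph of \emph{directed} $3$-cycles (resp.\ $4$-cycles) in $D$, (iii) finding a near-perfect matching $M$ there, and (iv) finding a second near-perfect matching $M'$ in the hypergraph on $D^{-1}$, disjoint from $M^{-1}$. The union $M\cup M'$ then covers most edges twice with compatible orientations, and Lemma~\ref{lem:rotation} turns it into a rotation system --- but only if it is \emph{blossom-free}. Ensuring this requires deleting one cycle from each blossom, and bounding the number of blossoms is itself nontrivial: the paper first splits each quasirandom piece further into $t=t(n)$ random edge-disjoint subgraphs of carefully tuned density $p_1=p/t$ (e.g.\ $n^{-1/2}\ll p_1\ll n^{-(1-\varepsilon)/(2-\varepsilon)}$ in the tripartite case) so that short blossoms are a.a.s.\ rare, and counts long blossoms separately. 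None of these ingredients --- the orientation, the two matchings, blossoms, or the secondary random thinning --- appears in your plan; without them ``processing vertices one at a time and locally adjusting cyclic orders'' cannot be carried out, since an arbitrary family of edge-disjoint triangles need not be the face set of any embedding.
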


There are two parts in the embedding algorithm that are nondeterministic. One of them uses random partition of the edges of $G$. This part can be derandomized (yielding a cubic polynomial dependence on $n$), but for the other one we do not see how to derandomize it. This part finds a large matching in a 3-uniform (or 4-uniform) hypergraph, and existence of a large matching and its construction relies on the Lov\'asz Local Lemma (for which one could use a randomized algorithm by Moser and Tardos \cite{MoTa10}). We make use of another randomized solution involving R\"odl nibble \cite{RoTh96} which yields quadratic dependence on $n$.

Algorithms in Theorems \ref{thm:main algorithm genus} and \ref{thm:main algorithm embedding} are based on analysis of minimum genus embeddings of quasirandom graphs. We partition the input graph into a bounded number of quasirandom subgraphs, which are preselected in such a way that they admit embeddings using as many triangles and quadrangles as faces as possible. The starting partition is obtained through an algorithmic version of the Szemer\'edi Regularity Lemma (due to Frieze and Kannan \cite{FK} and to Fox, Lov\'asz, and Zhao \cite{FLZ,FLZ2}).

We use the notion of quasirandomness inspired by the seminal paper of Chung, Graham, and Wilson \cite{quasi} (see also \cite{C}), and we need it in two special cases of bipartite and tripartite graphs, respectively. In order to define it, we need some notation.

Let $G$ be a graph, and $X,Y\subseteq V(G)$. We define the \emph{edge density} between $X$ and $Y$ as the number 
$$d(X,Y) = \frac{\e(X,Y)}{|X|\cdot|Y|},$$ 
where $\e(X,Y)$ is the number of edges with one end in $X$ and another end in $Y$.
If $G$ is a (large) bipartite graph with balanced bipartition $V(G) = V_1\cup V_2$ (by \emph{balanced} we mean that $| |V_1|-|V_2| | \le 1$), we say that $G$ is \emph{$\varepsilon$-quasirandom} if for every $X\subseteq V_1$ and every $Y\subseteq V_2$, we have
$$
    |\e(X,Y) - |X| |Y| d(V_1,V_2)| \le \varepsilon |V_1| |V_2|.
$$
This is equivalent (see \cite{quasi}) to saying that the number of 4-cycles with vertices in $X\cup Y$ is close to what one would expect in a random bipartite graph with the same edge density, i.e. $d^4(V_1,V_2)|V_1|^2 |V_2|^2$, with an error of at most $4\varepsilon |V_1|^2 |V_2|^2$.

First, we prove the following result.

\begin{theorem}
\label{thm:genus QR bipartite}
For every $\varepsilon>0$ there is a constant $n_0$ such that the following holds.
Suppose that $G$ is a bipartite graph with balanced bipartition $\{V_1,V_2\}$. Suppose that $G$ is $\varepsilon$-quasirandom with edge density $d=d(V_1,V_2)>0$. If $0 < \varepsilon < (d/4)^{16}$ and $n = |V(G)| \ge n_0$, then
$$
   \tfrac{1}{4}(1-10\varepsilon) \e(G) \le \g(G) \le \tfrac{1}{4}(1+10\varepsilon) \e(G).
$$
\end{theorem}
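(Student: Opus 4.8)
The plan is to establish the two inequalities separately. The lower bound follows at once from Euler's formula: since $G$ is bipartite, every face of a $2$-cell embedding of $G$ has length at least $4$, so an embedding in the surface of genus $\g(G)$ has at most $\tfrac12\e(G)$ faces, and $n-\e(G)+F=2-2\g(G)$ gives $\g(G)\ge\tfrac14\e(G)-\tfrac12 n+1$. As $\e(G)=d\,|V_1|\,|V_2|\ge\tfrac14 d\,(n-1)^2$ is quadratic in $n$, whereas $\varepsilon<(d/4)^{16}$ forces $d>4\varepsilon^{1/16}$ and hence bounds $d$ from below in terms of $\varepsilon$ alone, one may take $n_0=n_0(\varepsilon)$ large enough that $\tfrac12 n\le\tfrac{5}{2}\varepsilon\,\e(G)$, which yields $\g(G)\ge\tfrac14(1-10\varepsilon)\e(G)$. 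Only $d>0$ is used for this direction; quasirandomness plays no role.

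The upper bound is the real content, and it reduces to constructing a $2$-cell embedding of $G$ with at least $\bigl(\tfrac12-O(\varepsilon)\bigr)\e(G)-O(n)$ quadrilateral faces: Euler's formula then gives $\g(G)\le\tfrac14\e(G)+O(\varepsilon)\,\e(G)+O(n)$, which is below $\tfrac14(1+10\varepsilon)\e(G)$ once $n\ge n_0(\varepsilon)$ and the implied constants are made explicit (there is comfortable room in the constant $10$). So the task is to produce an \emph{approximate quadrangulation} of the quasirandom bipartite graph $G$. I would proceed in five steps: (i) delete $O(n)$ edges so that all degrees become even and the two parts have equal size, which perturbs $\e(G)$ and $\g(G)$ by only $O(n)$; (ii) use $\varepsilon$-quasirandomness to count, up to an absolute error $O(\varepsilon)\,|V_1|^2|V_2|^2$ --- hence, by $\varepsilon<(d/4)^{16}$, up to an error negligible against the main terms, which carry a bounded positive power of $d$ --- the walks of length $2$ and $3$ and the $4$-cycles between prescribed vertex sets; (iii) using these estimates, run a R\"odl-nibble or Lov\'asz-Local-Lemma argument on the auxiliary hypergraph whose vertices are the edges of $G$ and whose hyperedges are its $4$-cycles, to obtain a family $\mathcal{Q}$ of $4$-cycles covering all but $O(\varepsilon)\,\e(G)$ edges and carrying enough extra structure (see below) to serve, up to $O(\varepsilon)\,\e(G)+O(n)$ exceptional faces, as the face set of a rotation system; (iv) read off that rotation system and its face count; (v) re-insert the $O(n)$ edges deleted in step (i), each raising the genus by at most $1$. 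Steps (i), (iv) and (v) then combine to $\g(G)\le\tfrac14\e(G)+O(\varepsilon)\,\e(G)+O(n)\le\tfrac14(1+10\varepsilon)\e(G)$.

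The main obstacle is step (iii)--(iv): turning a convenient family of $4$-cycles into an actual embedding. A $4$-cycle decomposition, or even an edge-disjoint packing, is \emph{not} an embedding, since in an embedding every edge lies on exactly two faces, not one; what is really needed is a near-perfect $4$-cycle \emph{double cover} that additionally satisfies the local compatibility condition forcing it to be the face set of some rotation system (at each vertex the incident members of $\mathcal{Q}$ must admit a cyclic order in which consecutive ones share an incident edge), together with a global orientability condition. Building such a structure --- not merely a packing --- directly from the quasirandom data is where the probabilistic machinery and the quantitative hypothesis $\varepsilon<(d/4)^{16}$ must be pushed hardest. Once it is in place, absorbing the uncovered edges and the degree corrections of step (i) into non-quadrilateral ``junk'' faces while losing only $O(\varepsilon)\,\e(G)$ quadrilateral faces, and then extracting the genus bound from Euler's formula, is a long but essentially routine bookkeeping exercise.
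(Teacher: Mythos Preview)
Your lower bound is correct and matches the paper. Your high-level plan for the upper bound --- build a near-quadrangular embedding via hypergraph matching on $4$-cycles, then read off the genus from Euler's formula --- is also the paper's plan. But step (iii)--(iv) is not merely unfinished bookkeeping; the concrete mechanism you are missing is the whole substance of the argument.

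The paper does \emph{not} try to build a $4$-cycle double cover directly. Instead it randomly orients $G$ to obtain a digraph $D$, and works with \emph{directed} $4$-cycles. Two near-perfect matchings are found in the $4$-uniform hypergraph on $E(D)$: one among directed $4$-cycles of $D$, and a second, disjoint one among directed $4$-cycles of $D^{-1}$ (the second application of Pippenger--Spencer, avoiding the first matching). Their union is the analogue of your ``double cover with extra structure'': since each arc of $D\cup D^{-1}$ is used at most once, each undirected edge is covered at most twice, and the orientation automatically enforces global orientability. The local compatibility condition you allude to is made precise as the absence of \emph{blossoms} (families of directed cycles through a vertex whose tip-arcs cycle back on themselves), and a simple lemma (Lemma~\ref{lem:rotation}) shows that a blossom-free family of arc-disjoint directed cycles in $D\cup D^{-1}$ is realizable as a set of faces of some rotation system. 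None of this appears in your sketch, and without the orientation idea there is no obvious way to get two compatible packings rather than one.

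There is a second missing ingredient: the paper first splits $E(D)$ randomly into $t$ sparse pieces $D_1,\dots,D_t$ with $t$ chosen so that the density of each $D_i$ lies in a carefully tuned window. This is what makes the blossom count negligible --- short blossoms are rare because the pieces are sparse, long blossoms are rare because tips at a common centre are disjoint. Without this thinning, the number of blossoms to destroy could swamp the matching. Your step (i), forcing even degrees, is neither needed nor helpful here; the paper never does it.

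So: right outline, right identification of the obstacle, but the actual devices that overcome it (orientation, two matchings in $D$ and $D^{-1}$, blossom-freeness as the compatibility criterion, and random thinning to kill blossoms) are absent.
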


The above theorem says, roughly speaking, that $G$ admits an embedding in some orientable surface in which almost all faces are quadrilaterals. More precisely, almost every edge is contained in two quadrangular faces.

A similar result holds for \emph{tripartite $\varepsilon$-quasirandom graphs}. Here we have three almost equal parts $V_1,V_2,V_3$, and the graph between any two of them is bipartite $\varepsilon$-quasirandom. Here we only need the corresponding embedding result when the densities between the three parts are the same.

\begin{theorem}
\label{thm:genus QR tripartite}
For every $\varepsilon>0$ there is a constant $n_0$ such that the following holds.
Suppose that $G$ is a tripartite $\varepsilon$-quasirandom graph with edge densities $d=d(V_1,V_2)=d(V_1,V_3)=d(V_2,V_3)>0$. If $0 < \varepsilon < (d/4)^{12}$ and $n = |V(G)|\ge n_0$, then
$$
   \tfrac{1}{6}(1-10\varepsilon) \e(G) \le \g(G) \le \tfrac{1}{6}(1+10\varepsilon) \e(G).
$$
\end{theorem}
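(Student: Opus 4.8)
The plan is to prove the two inequalities separately. The lower bound is an immediate consequence of Euler's formula. We may assume $G$ is connected (for $n$ large the bipartite quasirandom graph between any two of the three parts, having positive density, is connected, and hence so is $G$). A minimum genus embedding of a connected graph is cellular, so Euler's formula gives $2\g(G)=2-|V(G)|+\e(G)-f$, where $f$ is the number of faces; since $G$ is simple, every face has length $\ge 3$ and every edge lies on exactly two faces, so $3f\le 2\e(G)$, whence $\g(G)\ge 1-\tfrac12|V(G)|+\tfrac16\e(G)$. Quasirandomness with density $d$ forces $\e(V_i,V_j)\ge(d-\varepsilon)|V_i||V_j|$, so $\e(G)=\Theta(dn^2)$, which dominates $|V(G)|$; choosing $n_0$ large enough in terms of $\varepsilon$ (using $d>4\varepsilon^{1/12}$ to eliminate the dependence on $d$) makes $\tfrac12|V(G)|\le\tfrac{10\varepsilon}{6}\e(G)$, and the lower bound follows.

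For the upper bound it suffices to exhibit a subgraph $G_0\subseteq G$ with $|E(G)\setminus E(G_0)|=o(\e(G))$ together with an orientable cellular embedding of $G_0$ having at least $(1-o(1))\tfrac23\e(G)$ faces: Euler's formula then gives $\g(G_0)\le\tfrac16\e(G)+o(\e(G))$, and since adding one edge to an embedded graph raises the genus by at most $1$, we obtain $\g(G)\le\tfrac16\e(G)+o(\e(G))\le\tfrac16(1+10\varepsilon)\e(G)$ once $n\ge n_0$. Such an embedding is encoded by a set $\mathcal{T}$ of triangles of $G$ which covers all but $o(\e(G))$ edges exactly twice and which, moreover, (i) admits an orientation of its triangles under which the two triangles containing a covered edge orient it oppositely (this guarantees orientability), and (ii) at every vertex $v$ has the triangles through $v$ chaining the edges incident with $v$ into a single cyclic permutation (this guarantees that $\mathcal{T}$ is exactly the face set of a cellular embedding, with that permutation as the rotation at $v$). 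Given such a $\mathcal{T}$ one gets $f=|\mathcal{T}|=(1-o(1))\tfrac23\e(G)$, so everything reduces to constructing $\mathcal{T}$.

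The three bipartite quasirandom pairs imply that every edge $uv$ of $G$ lies in $(1\pm o(1))\,d^2|V_k|$ triangles ($k$ the third index), with these counts stable under deletion of $o(n^2)$ edges; assigning each triangle weight $\Theta(1/(d^2n))$ is then an almost-perfect fractional triangle decomposition, so by the R\"odl nibble (equivalently the Pippenger--Spencer theorem on near-perfect matchings in uniform hypergraphs with small codegrees) $G$ has a triangle packing covering all but $o(\e(G))$ edges, and running this twice covers almost every edge exactly twice. The remaining work is to massage this into a $\mathcal{T}$ satisfying (i) and (ii). I would do so by maintaining a partial embedding of a growing subgraph of $G$ by triangular faces plus a small remainder, repeatedly absorbing random batches of fresh triangles while locally rerouting the rotation systems so that each vertex link stays a single cycle and a fixed reference orientation is preserved, using quasirandomness to guarantee enough freedom at each step and to keep the accumulated error $o(\e(G))$. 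An alternative is to begin from an explicit orientable triangulation of a complete or near-complete tripartite ``skeleton'' --- in the spirit of the classical current-graph and Latin-square constructions for $K_{t,t,t}$ --- and perturb it to fit $G$, once again controlling the discrepancy via quasirandomness.

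The main obstacle is precisely this last step. The R\"odl nibble produces a near-triangle-decomposition but exerts no control whatsoever over the cyclic order of the triangles around a vertex, and it is exactly this cyclic order that distinguishes the face set of an embedding from an arbitrary triangle packing; insisting on orientability simultaneously adds a further layer. I expect the honest argument to be long and self-contained (as the abstract warns), interleaving the probabilistic packing with topological surgery --- diamond sums, handle additions and deletions, local rerouting of rotations --- and carrying quantitative stability estimates throughout. This is the same difficulty, with triangles in place of quadrilaterals, that underlies Theorem~\ref{thm:genus QR bipartite}, so one expects the two results to fit into a common framework, differing chiefly in the parity of the face lengths and the attendant orientation bookkeeping.
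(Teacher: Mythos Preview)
Your lower bound is fine and matches the paper's. Your upper-bound strategy is on the right track up to the point where you invoke Pippenger--Spencer to get near-perfect triangle packings, and you correctly identify that the real work is turning such packings into the face set of an orientable embedding. But your proposal stops exactly there: you describe the obstacle accurately and then offer only vague plans (``locally rerouting the rotation systems'', ``current-graph and Latin-square constructions''). This is a genuine gap, and the paper's mechanism for bridging it is quite different from anything you sketch.

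The paper's device is the following. First, orient the edges of $G$ uniformly at random to obtain a digraph $D$. Run Pippenger--Spencer on the $3$-uniform hypergraph of \emph{directed} triangles in $D$ to get a matching $M$, then again on the hypergraph of directed triangles in $D^{-1}$, asking for a matching $M'$ disjoint from $M^{-1}$. Because $M$ lives in $D$ and $M'$ in $D^{-1}$, the two families are automatically arc-disjoint and the orientations are compatible by construction --- this disposes of your obstacle (i) without any bookkeeping. For obstacle (ii), the paper introduces the notion of a \emph{blossom}: a cyclic sequence of directed triangles through a common vertex $v$ whose tips form a closed cycle among the neighbours of $v$. The key Lemma (Lemma~\ref{lem:rotation}) says that any blossom-free family of arc-disjoint directed cycles in $D\cup D^{-1}$ can be realized as a subset of the faces of some rotation system. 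So one only needs to destroy all blossoms by removing one triangle from each; the point is that there are few of them. Long blossoms (length $\ge 1/\varepsilon$) are scarce because distinct blossoms at a vertex have disjoint tips. Short blossoms are controlled by a further trick you do not mention: before building the hypergraphs, the paper randomly splits $E(D)$ into $t$ edge-disjoint pieces $D_1,\dots,D_t$ with $t$ chosen so that the effective density $p_1=p/t$ satisfies $n^{-1/2}\ll p_1\ll n^{-(1-\varepsilon)/(2-\varepsilon)}$; at this density the expected number of short blossoms in each $D_i$ is $o(|E(D_i)|)$ by a direct Counting-Lemma estimate. One then runs the matching argument separately in each $D_i$ and assembles the results.

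In short, the missing idea is not ``topological surgery'' or an explicit skeleton triangulation, but rather (a) the random-orientation/two-matching trick to handle orientability for free, (b) the blossom-free criterion of Lemma~\ref{lem:rotation} to reduce embeddability to a purely combinatorial obstruction, and (c) the density-splitting to make that obstruction negligible.
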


Similarly as for Theorem \ref{thm:genus QR bipartite}, the outcome of the above theorem is that $G$ admits an embedding in which almost every edge is contained in two triangular faces.

Theorems \ref{thm:genus QR bipartite} and \ref{thm:genus QR tripartite} have extensions to multipartite case with possibly non-equal edge densities and non-empty quasirandom graphs in the parts of the vertex partition.  For this extension see the main part of the paper (Theorem \ref{thm:quasimultigenus}).

Proofs of Theorems \ref{thm:genus QR bipartite} and \ref{thm:genus QR tripartite} build on the approach introduced by Archdeacon and Grable \cite{dense} and R\"odl and Thomas \cite{genus}.  The main ingredient is to find two disjoint almost perfect matchings in a 3-uniform (or in a 4-uniform) hypergraph associated with short cycles in $G$. One particular difficulty is that there may be too many short cycles, in which case the matchings obtained from these hypergraphs may not form a set which could be realized as facial cycles of an embedding of the graph. This has to be dealt with accordingly. The proof uses an old result of Pippenger and Spencer \cite{PS} about existence of large matchings in hypergraphs.

From the above expressions about the genus of quasirandom graphs, there is just one major step left. We show that $G$ can be partitioned into a constant number of bipartite and tripartite $\varepsilon$-quasirandom subgraphs such that almost all triangles of $G$ belong to the tripartite $\varepsilon$-quasirandom subgraphs in the partition. For each of these $\varepsilon$-quasirandom subgraphs, Theorem \ref{thm:genus QR tripartite} or Theorem \ref{thm:genus QR bipartite} can be applied. The main result is the following version of the Szemer\'edi Regularity Lemma. See Sections \ref{sec:cut metric}--\ref{sec:notation} for necessary definitions. 

\begin{theorem}
\label{thm:partition}
There exists a computable function $s: \NN\times \RR^+ \to \NN$ such that the following holds.
For every $\varepsilon>0$ and every positive integer $M$ there is an integer $K$, where $M\le K \le s(M,\varepsilon)$ such that every graph of order $n\ge M$ has an equitable partition of its vertices into $K$ parts, $V(G)=V_1\cup\cdots\cup V_K$, and $G$ admits a partition into $O(K^2)$ bipartite $\varepsilon$-quasirandom subgraphs $G_{ij}$ $(1\le i<j\le K)$ with $V(G_{ij})=V_i\cup V_j$, and into $O(K^3)$ tripartite $\varepsilon$-quasirandom subgraphs $G_{ijk}$ $(1\le i<j<k\le K)$ with $V(G_{ijk})=V_i\cup V_j\cup V_k$ with equal densities between their parts, and one additional subgraph $G_0$ with at most $\varepsilon n^2$ edges. Moreover, the union of all bipartite constituents $G_{ij}$ is triangle-free.
\end{theorem}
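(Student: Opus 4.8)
The plan is to extract the decomposition from a Szemer\'edi-type regular partition by \emph{randomly redistributing} the edges of $G$ among bipartite and tripartite pieces, arranging that each tripartite piece has equal densities between its three parts and that inside every triple of parts at least one of the three bipartite pieces is emptied --- this last, purely local, condition is exactly what forces the union of the bipartite constituents to be triangle-free. (For $n$ below the threshold where the probabilistic part of the argument bites one argues directly: take singleton parts and a maximal collection of edge-disjoint triangles as the tripartite pieces; by maximality their complement contains no triangle, so the leftover edges, viewed as singleton bipartite pieces, finish the job with $G_0$ empty. Defining $s(m,\varepsilon)$ to be the larger of this threshold and the usual tower bound then covers all $n\ge m$.)

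For large $n$ the plan is to first apply the (strong) Szemer\'edi Regularity Lemma with a tiny auxiliary regularity parameter $\varepsilon'\ll\varepsilon$ and the number of parts forced to be at least $\max\{m,\lceil\varepsilon^{-1}\rceil\}$, obtaining an equitable partition $V(G)=V_1\cup\cdots\cup V_K$ with $m\le K\le s(m,\varepsilon)$ and at most $\varepsilon'K^2$ irregular pairs; since an $\varepsilon'$-regular pair satisfies the discrepancy inequality, every regular pair is automatically $\varepsilon$-quasirandom once $\varepsilon'$ is small. Into $G_0$ one places all edges lying inside a part, all edges in irregular pairs, and all edges in pairs of density below a threshold $\eta=\eta(\varepsilon)$; a routine count shows this is below $\varepsilon n^2$ once $1/K,\varepsilon',\eta$ are small enough and $n$ is large. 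Call a pair \emph{good} if it survives this pruning (so it is $\varepsilon$-quasirandom of density at least $\eta$) and a triple \emph{good} if all three of its pairs are good.

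The combinatorial core is the choice of common densities for the tripartite pieces. Give each good pair a ``capacity'' equal to its density, process the good triples in an arbitrary order, and upon handling a triple $ijk$ set its common density $\delta_{ijk}$ to be the current minimum of the three capacities of its pairs, then decrease each of those three capacities by $\delta_{ijk}$. Capacities only decrease, never go negative, and once a capacity reaches $0$ it stays there; moreover after $ijk$ is processed at least one of its three pairs has capacity $0$. Hence at termination every good triple has a \emph{saturated} pair, one whose entire density has been allocated, and $\sum_k\delta_{ijk}\le d(V_i,V_j)$ and $\delta_{ijk}\le\min\{d(V_i,V_j),d(V_i,V_k),d(V_j,V_k)\}$ throughout. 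Then, independently for each good pair $(V_i,V_j)$, take a uniformly random partition of its edge set into subsets of sizes $\lfloor\delta_{ijk}|V_i||V_j|\rfloor$ (one per good triple through the pair) together with a leftover subset --- possible since the prescribed sizes sum to at most $\e(V_i,V_j)$. The tripartite piece $G_{ijk}$ is assembled from the three corresponding subsets (whose pairwise densities all equal $\delta_{ijk}$ up to negligible rounding) and the bipartite piece $G_{ij}$ is the leftover. Triangle-freeness of $\bigcup_{i<j}G_{ij}$ is then automatic: a triangle in this union would have its vertices in three distinct parts $V_i,V_j,V_k$ with edges in three nonempty bipartite pieces, so none of $ij,ik,jk$ is saturated, so $ijk$ is not a good triple, so one of its pairs is bad and the corresponding triangle edge lies in $G_0$ --- a contradiction.

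The step I expect to be the main obstacle is showing that quasirandomness survives the random redistribution: each bipartite piece $G_{ij}$ and each of the three bipartite graphs underlying a tripartite piece is a uniformly random edge-subsample of an $\varepsilon$-quasirandom bipartite graph of density at least $\eta$, and one needs all of these to be $\varepsilon$-quasirandom (hence each $G_{ijk}$ tripartite $\varepsilon$-quasirandom) simultaneously with positive probability. This should follow from a concentration estimate for sampling without replacement --- for fixed $X\subseteq V_i$, $Y\subseteq V_j$ the discrepancy of the subsample concentrates around $0$ with failure probability $\exp(-\Omega(\varepsilon^2\eta\,n^2))$ --- followed by a union bound over the at most $4^n$ pairs $(X,Y)$ and the $O(K^3)$ pieces, which is precisely where the density floor $\eta$ and a lower bound $n\ge n_0(\varepsilon)$ are genuinely used. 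A secondary technical nuisance is the accounting that keeps the pruning within the $\varepsilon n^2$ budget while still leaving density at least $\eta$ on every good pair, so that the concentration argument has something to act on.
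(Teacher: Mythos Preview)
Your proof is correct for Theorem~\ref{thm:partition} as stated, but it takes a different route from the paper at the combinatorial core. The paper chooses the tripartite densities by solving the linear program~(\ref{eq:lp}) on the quotient graph $H$ and taking an \emph{optimal} solution $(t(T))_{T\in\T}$; triangle-freeness of the bipartite remainder then follows because at an optimum every triangle of $H$ must have at least one tight (saturated) edge constraint --- otherwise one could raise $t(T)$ and improve the objective. You instead obtain saturation by a direct greedy sweep over the good triples, which is simpler and makes the triangle-freeness argument completely transparent. The trade-off is that your greedy allocation need not come anywhere near the LP optimum $\nu(H)$: the paper's choice of $t(T)$ is not incidental but is precisely what is required for Corollary~\ref{cor:genus partition} and the genus approximation, where the upper and lower bounds on $\g(G)$ are expressed in terms of $\nu(H)$. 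So your argument proves the decomposition theorem cleanly, but would have to be replaced by the LP version before the subsequent applications go through. On the technical side, your Chernoff/Hoeffding plus union-bound argument for preserving quasirandomness under random edge-splitting is the analogue of the paper's Lemma~\ref{lem:quasipartition}; note that the relevant count of pairs $(X,Y)$ is $4^{n/K}$ rather than $4^n$, though the concentration is strong enough that this slip is harmless.
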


To obtain such a partition $V(G)=V_1\cup\cdots\cup V_K$ we start with an $\varepsilon$-regular partition obtained from the Szemer\'edi Regularity Lemma. Such a partition can be constructed in quadratic time by using an algorithm of Fox, Lov\'asz, and Zhao \cite{FLZ2}. The edges in ``irregular pairs'' and all edges in subgraphs $G[V_i]$ ($1\le i\le K$) are put into the subgraph $G_0$. All remaining edges belong to bipartite subgraphs joining pairs $V_i$ and $V_j$ ($1\le i<j\le K$). Let $d_{ij}$ be the edge density for each such bipartite subgraph. We represent the partition by a weighted graph $H$ on vertices $\{1,\dots,K\}$, where each edge $ij$ has weight $d_{ij}$.

Let $\T$ be the set of all triangles (with positive edge weights) in the quotient graph $H$. For every triangle $T=abc\in\T$, let $d(T)=\min\{d_{ab},d_{bc},d_{ac}\}$. Now we consider the following linear program with indeterminates $\{t(T)\mid T\in\T\}$:

\begin{equation}\label{eq:lp}
\begin{split}
&\nu^*(H)=\max\sum_{T\in\T}t(T),\\
&\sum_{T\ni ij,T\in\T}t(T)\leq d_{ij}, \quad \text{ for every edge }ij\text{ of }H,\\
&t(T)\geq0,\quad \text{ for every }T\in\T.
\end{split}
\end{equation}

We consider an optimum solution $(t(T)\mid T\in\T)$ of this linear program.
For each $T=abc\in \T$ we now define $G_T$ as a subgraph of $G[V_a\cup V_b\cup V_c]$ by taking a random set of edges with density $t(T)$ from each of the three bipartite graphs between $V_a,V_b,V_c$. The edges between the sets $V_i$ and $V_j$ ($1\le i<j\le K$) that remain after removing all tripartite subgraphs $G_T$ form bipartite quasirandom subgraphs.

From the Partition Theorem \ref{thm:partition} it is not hard to see that
$$
  \g(G) \le \sum_{i,j} \g(G_{ij}) + \sum_{i,j,k} \g(G_{ijk}) + nK^2 + \varepsilon n^2.
$$
By using Theorems \ref{thm:genus QR bipartite} and \ref{thm:genus QR tripartite} we derive the main result:

\begin{corollary}
\label{cor:genus partition}
For every $\varepsilon>0$ there is a constant $n_0$ such that the following holds.
Let $G$ be a graph that is partitioned as stated in Theorem \ref{thm:partition}, let $\nu=\nu^*(H)$ be the optimum value of the linear program (\ref{eq:lp}), and let $s:\NN\times \RR^+ \to \NN$ be the function from Theorem \ref{thm:partition}. If $n = |V(G)|\ge n_0$, then the genus of $G$ satisfies:
$$
  \frac{1}{4} (1-\varepsilon)\left(\e(G) - \frac{\nu n^2}{K^2}\right) \le \g(G) \le
   \frac{1}{4} (1+\varepsilon) \left(\e(G) - \frac{\nu n^2}{K^2}\right) + nK^2 + \varepsilon n^2.
$$
\end{corollary}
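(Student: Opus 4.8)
The plan is to combine the structural decomposition of Theorem \ref{thm:partition} with the genus estimates for quasirandom constituents (Theorems \ref{thm:genus QR bipartite} and \ref{thm:genus QR tripartite}), and to bound the total edge count of the quasirandom pieces in terms of the linear-program optimum $\nu$. For the upper bound on $\g(G)$, I would start from the subadditivity inequality already displayed just before the corollary,
$$
  \g(G) \le \sum_{i,j} \g(G_{ij}) + \sum_{i,j,k} \g(G_{ijk}) + nK^2 + \varepsilon n^2,
$$
which follows from the fact that the genus of a graph is at most the sum of the genera of a decomposition of its edge set into subgraphs (plus a correction of at most one handle per shared vertex, absorbed into the $nK^2$ term). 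Then I would apply Theorem \ref{thm:genus QR tripartite} to each tripartite constituent $G_{ijk}$ (these have equal densities between parts, as required) to get $\g(G_{ijk}) \le \tfrac14 (1+10\varepsilon)\cdot\tfrac32\,\e(G_{ijk})$ — wait, more carefully, Theorem \ref{thm:genus QR tripartite} gives $\g(G_{ijk})\le \tfrac16(1+10\varepsilon)\e(G_{ijk})$ — and Theorem \ref{thm:genus QR bipartite} to each bipartite constituent $G_{ij}$ to get $\g(G_{ij})\le \tfrac14(1+10\varepsilon)\e(G_{ij})$. Summing and using $\sum \e(G_{ij}) + \sum \e(G_{ijk}) = \e(G) - \e(G_0)$ reduces everything to comparing $\tfrac14\sum\e(G_{ij}) + \tfrac16\sum\e(G_{ijk})$ with $\tfrac14(\e(G) - \nu n^2/K^2)$.

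The crux is therefore the identity (up to lower-order error) $\tfrac14\sum_{i,j}\e(G_{ij}) + \tfrac16\sum_{i,j,k}\e(G_{ijk}) = \tfrac14\e(G) - \tfrac{1}{12}\sum_{i,j,k}\e(G_{ijk})$, together with $\sum_{i,j,k}\e(G_{ijk}) \approx 3\cdot \tfrac{n^2}{K^2}\sum_{T}t(T) = \tfrac{3\nu n^2}{K^2}$. This last approximation is where the construction of the $G_T$ enters: each $G_T$ (for $T=abc$) is obtained by taking, from each of the three bipartite graphs between $V_a,V_b,V_c$, a random subset of edges of density $t(T)$, and each part has size $\approx n/K$, so $\e(G_T) \approx 3\,t(T)\,(n/K)^2$; summing over $T\in\T$ gives $\sum_{i,j,k}\e(G_{ijk}) \approx \tfrac{3n^2}{K^2}\sum_T t(T) = \tfrac{3\nu n^2}{K^2}$. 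Plugging in, $\tfrac14\e(G) - \tfrac{1}{12}\cdot\tfrac{3\nu n^2}{K^2} = \tfrac14\big(\e(G) - \tfrac{\nu n^2}{K^2}\big)$, which is exactly the claimed main term; the factor $(1+10\varepsilon)$ from the quasirandom theorems, the discarded $\e(G_0)\le\varepsilon n^2$, and the $O(1/K)$ rounding errors from unequal part sizes all get absorbed into the $(1+\varepsilon)$ factor and the additive $nK^2 + \varepsilon n^2$ slack (after a harmless rescaling of $\varepsilon$ at the outset, and using $n\ge n_0$ to make the relative errors small). For the lower bound, I would argue dually: any embedding of $G$ restricts to embeddings of the edge-disjoint subgraphs $G_{ij}$ and $G_{ijk}$, so $\g(G)\ge \sum\g(G_{ij}) + \sum\g(G_{ijk}) - (\text{correction})$; but more directly one uses the lower bounds in Theorems \ref{thm:genus QR bipartite} and \ref{thm:genus QR tripartite} on the constituents — however, this alone does not immediately give a lower bound on $\g(G)$ since genus is not superadditive over edge-disjoint subgraphs in general. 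Instead, the lower bound should come from Euler's formula applied to $G$ as a whole: in any embedding, the number of triangular faces is at most roughly $2\nu n^2/K^2$ (because the bipartite constituents are triangle-free by the last sentence of Theorem \ref{thm:partition}, so all triangles of $G$ live inside the $G_{ijk}$, and the LP value $\nu$ bounds the packing of edge-disjoint triangles there), whence $3f_3 + 4(f - f_3) \le 2\e(G)$ forces $f \le \tfrac12\e(G) - \tfrac14 f_3 \le \tfrac12\e(G) - \tfrac14\cdot\tfrac{2\nu n^2}{K^2}$, roughly, and then $2 - 2\g(G) = \n - \e + f$ gives the stated lower bound after rearranging and absorbing errors.

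The main obstacle I anticipate is the lower-bound argument, specifically justifying that in every 2-cell embedding of $G$ the number of triangular faces is at most $(1+o(1))\tfrac{2\nu n^2}{K^2}$: this requires (i) observing that every triangular face of $G$ is a triangle of $G$ and hence — since $\bigcup G_{ij}$ is triangle-free — has all three edges inside a single tripartite constituent $G_{ijk}$, and (ii) bounding the number of edge-disjoint triangles available across all the $G_{ijk}$ by a quantity close to $\tfrac{2\nu n^2}{K^2}$, which is where one must relate the combinatorial triangle-packing in $G$ to the fractional relaxation $\nu$ of the LP \eqref{eq:lp} (the LP constraint $\sum_{T\ni e} t(T)\le d_e$ precisely encodes that the $G_T$ can be realized with disjoint edge sets). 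Making this bound tight enough — and checking that an integral triangle packing is not much larger than $\nu n^2/K^2$ up to the permitted $\varepsilon$ error — is the delicate point; the rest is bookkeeping of $(1\pm\varepsilon)$-factors and the $nK^2$ handle-correction terms, which I would handle by fixing a sufficiently small auxiliary constant and invoking $n\ge n_0$.
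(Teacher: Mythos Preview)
Your upper-bound argument is correct and is exactly what the paper does (see the proof of Theorem~\ref{thm:quasimultigenus}): subadditivity of genus over the edge-decomposition, then Theorems~\ref{thm:genus QR bipartite}--\ref{thm:genus QR tripartite} on the pieces, and the identity $\tfrac14\sum\e(G_{ij}) + \tfrac16\sum\e(G_{ijk}) \approx \tfrac14\bigl(\e(G) - \nu n^2/K^2\bigr)$ via $\sum_{ijk}\e(G_{ijk})\approx 3\nu (n/K)^2$.

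For the lower bound your overall strategy --- Euler's formula together with an upper bound on the number $f_3$ of triangular faces --- is also the paper's, but your justification of the bound on $f_3$ has a gap. Claim (i) is false: the triangle-freeness of $\bigcup G_{ij}$ only says a triangle of $G$ cannot have \emph{all three} edges in the bipartite constituents; it does not force the triangle into a single $G_{ijk}$. A triangle with vertices in $V_a, V_b, V_c$ may well have its $ab$-edge in $G_{abc}$, its $bc$-edge in $G_{bcd}$, and its $ac$-edge in $G_{ac}$, or it may use an edge of $G_0$. So you cannot reduce the face count to a per-constituent packing problem.

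The paper's fix (Lemma~\ref{lem:packing triangles}) bypasses the decomposition entirely. The triangular faces of any embedding form a family $\mathcal F$ of triangles of $G$ in which every edge lies in at most $\lambda=2$ members. Writing $f(i,j,k)$ for the number of triangles of $\mathcal F$ with one vertex in each of $V_i,V_j,V_k$ and setting $t(T_{ijk}) := f(i,j,k)\big/\bigl(\lambda(1+o(1))(n/K)^2\bigr)$, one checks from $\sum_{k} f(i,j,k) \le \lambda\,\e_G(V_i,V_j)$ that $\{t(T_{ijk})\}$ is a feasible point of the LP~\eqref{eq:lp}; hence $|\mathcal F| \le (2+o(1))\nu\,(n/K)^2$. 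This is precisely the step you flagged as ``delicate'', and it does require this LP-feasibility observation rather than the triangle-free property of the bipartite part. (Also, a minor slip: from $3f_3 + 4(f-f_3)\le 2\e(G)$ one gets $f \le \tfrac12\e(G) + \tfrac14 f_3$, not $-\tfrac14 f_3$; with the correct sign the rest of your Euler computation goes through.)
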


The value of $n_0$ in Corollary \ref{cor:genus partition} comes from several sources. One is the use of Regularity Lemma, another one comes from using hypergraph matching results (see Theorems \ref{thm:hypergraphmatching} and \ref{thm: packing weighted graph}), and yet another one occurs from the fine-tuning of the parameters in Section \ref{subsect:multipartite QR}.

An application of Theorems \ref{thm:genus QR bipartite} and \ref{thm:genus QR tripartite} requires that the densities are not too small. If this is not the case, we simply add the edges between pairs $V_i,V_j$, whose density is too small, to $G_0$.

In order to apply the corollary to obtain an $\varepsilon$-approximation to the genus, we use the corollary with the value $\tfrac{1}{2}\varepsilon$ playing the role of $\varepsilon$.  If $n\ge \Theta(s^2(4\varepsilon^{-1}, \varepsilon)\cdot \varepsilon^{-1})$, then the last two terms in the corollary are bounded by $\tfrac{3}{4}\varepsilon n^2$. Now, if $\varepsilon$ is much smaller than the lower bound $\alpha$ on the density of $G$, we get an $\varepsilon$-approximation of the genus of $G$.

Although we have not mentioned anything about the nonorientable genus before, the same results hold for the nonorientable genus, where all formulas about the genus need to be multiplied by 2.

\subsection{Overview of the algorithms}

On a high level, our EPTAS for {\sc Approximating Genus Dense} works as follows.

\medskip

{\sc Phase 0.} Check whether the graph is dense enough: If $\e(G) \le \varepsilon n^2$, we return the information that $\g(G)<\varepsilon n^2$ and stop.

{\sc Phase 1.} Let $M=2\varepsilon^{-1}$ and let $M'=s(m,\varepsilon)$ where the function $s$ is from Theorem \ref{thm:partition}.  If $|G|\leq M'^2 \varepsilon^{-1}$, we compute the genus of $G$ exactly and return the result. Otherwise, we proceed with the next step.

{\sc Phase 2.} We find a regular partition of $G$ into $K$ parts, where $M\le K\le M'$, and according to Theorem \ref{thm:partition} partition $G$ into edge-disjoint subgraphs $G_0,G_1,\dots,G_N$, where $N=O(K^3)$, each of them except $G_0$ of order $\Theta(n/K)$ such that the following holds:
\begin{itemize}
\item[(i)] $G_0$ has at most $\tfrac{1}{2}\varepsilon n^2$ edges.
\item[(ii)] All other subgraphs are either bipartite or tripartite $\varepsilon$-quasirandom.
\item[(iii)] The union of bipartite subgraphs contains no triangles.
\end{itemize}

{\sc Phase 3.} Determine the densities $d_{ij}$, $1\le i<j\le K$, and solve the linear program (\ref{eq:lp}).
Let $\nu=\nu^*(H)$ be the optimal value computed.
Return the value $g = \frac{1}{4}\e(G) - \frac{\nu n^2}{4K^2}$.

\medskip

The heart of the algorithm lies in {\sc Phase 2}. However, {\sc Phase 3} is the most challenging mathematical part and has a complicated justification. For the partition of $G$ into $G_0,G_1,\dots, G_N$ we could use the algorithmic version of the Szemer\'edi Regularity Lemma due to Frieze and Kannan \cite{FK}. But it is more convenient to use a recent strengthening of Frieze-Kannan partitions due to Fox, Lov\'asz, and Zhao \cite{FLZ}. Their result provides an \emph{$\varepsilon$-regular partition} (in the sense of Szemer\'edi) of $V(G)$ into sets $V_1,\dots, V_K$ of size $n/K$ (we neglect rounding of non-integral values as they are not important for the exposition) such that the majority of pairs $(V_i,V_j)$ ($1\le i<j\le K$) are \emph{$\varepsilon$-regular}. Each such pair induces a bipartite $\varepsilon$-quasirandom graph. The edges in pairs that are not $\varepsilon$-regular can be added to $G_0$ together with all edges in $\bigcup_{i=1}^K G[V_i]$. So from now on, we assume that all edges of the graph are in $\varepsilon$-regular pairs $(V_i,V_j)$ ($1\le i<j\le K$).

The second, most difficult step, is to analyse the quotient graph determined by the partition. In this step we use a linear programming approach to find for each triple $T=(i,j,k)$, $1\le i<j<k\le K$, the number $t(T)\ge0$ and an $\varepsilon$-quasirandom subgraph $G_T\subseteq G[V_i\cup V_j\cup V_k]$ with $t(T)(n/K)^2$ edges between each pair $(V_i,V_j), (V_i,V_k)$, and $(V_j,V_k)$. The graphs $G_T$ are then used to obtain as many triangular faces as possible (up to the allowed error) for the embedding of $G$. The edges that remain form quasirandom bipartite parts $G_{ij}$ between pairs $(V_i,V_j)$ ($1\le i<j\le K$). We use those to obtain as many quadrangular faces as possible (up to the allowed error) for the embedding of $G$.

Finally, the near-triangular embeddings of all $G_T$ and near-quadrangular embeddings of all $G_{ij}$ are used to produce a near-optimal embedding of $G$. The description of this part is in the main part of the paper.

Let us now comment on the main issues in the algorithmic part and in the theoretical justification. First, we use known regularity partition results to find a partition $V_1,\dots,V_K$. The algorithm runs in quadratic time with a decent (but superexponential) dependence on $1/\varepsilon$. The linear programming part to determine triangle densities $t(T)$ is done on a constant size linear program and a rounding error of magnitude $O(\varepsilon)$ is allowed. Having gathered all the information about the required edge densities in the partition, the partition of $G$ into subgraphs $G_T$ and $G_{ij}$ uses a randomized scheme, although derandomization is possible. For the computation of the approximate value for the genus (Corollary \ref{cor:genus partition}), the partition is not needed. We just need to know that it exists, and we need to know the edge densities between the regular pairs of the partition. Thus, this part is deterministic.

For the justification that the graphs $G_T$ and $G_{ij}$ admit almost triangular and almost quadrangular embeddings, respectively, we use the quasirandomness condition. The proof is based on a theorem by Pippenger and Spencer \cite{PS} giving a large matching in a dense 3- or 4-uniform hypergraph (respectively). The hyperedges in the hypergraph correspond to cycles of length 3 and 4 (respectively) in the considered subgraph $G_T$ or $G_{ij}$. Two such matchings are needed in order to combine them into an embedding of the graph, most of whose faces will be the triangles or quadrangles of the two hypergraph matchings. Quasirandomness is used to show that such matchings exists and that they have additional properties needed for them to give rise to an embedding. To obtain such a matching, we can follow the proof of Pippenger and Spencer, but the proof uses the Lov\'asz Local Lemma. In order to construct such a matching, we may apply the algorithmic version of the Lov\'asz Local Lemma that was obtained by Moser and Tardos \cite{MoTa10}. Alternatively, we may apply the randomized algorithm of R\"odl and Thoma \cite{RoTh96} which uses the R\"odl nibble method. A similar algorithm was obtained by Spencer \cite{S95}. Both of these latter algorithms use greedy selection and run in quadratic time, but they are both randomized. This is the essential part where we are not able to provide corresponding derandomized version.

%%%%%%%%%%%%%%%%%%%%%%%%%%%%%%%%%%%%%%%%%%%%%%%%%%%%
\section{Preliminaries}

This section contains basic definitions and theoretical background needed for the rest of the paper.
\subsection{Minimum genus embeddings}
Given a simple graph $G$, let $\g(G)$ be the {\em genus} of $G$, that is, the minimum $h$ such that $G$ embeds into the orientable surface $\mathbb{S}_h$ of genus $h$, and let $\ng(G)$ be the {\em non-orientable genus} of $G$ which is the minimum $c$ such that $G$ embeds into the non-orientable surface $\mathbb{N}_c$ with crosscap number $c$. By a surface we mean is a compact two-dimensional manifold without boundary. We say $G$ is {\em 2-cell embedded} in a surface $S$ if each face of $G$ is homeomorphic to an open disk. We say an embedding of $G$ is {\em triangular} if every face is bounded by a triangle, and an embedding is {\em quadrangular} if every face is bounded by a cycle of length $4$.

Given a graph $G$, determining the genus of $G$ is one of the fundamental problems in topological graph theory. Youngs \cite{Y} showed that the problem of determining the genus of a connected graph $G$ is the same as determining a $2$-cell embedding of $G$ with minimum genus. The same holds for the non-orientable genus \cite{PPPV}. For further background on topological graph theory, we refer to \cite{top}.

Now we focus on the $2$-cell embeddings of a graph $G$. We say $\Pi=\{\pi_v\,|\,v\in V(G)\}$ is a {\em rotation system} if for each vertex $v$, $\pi_v$ is a cyclic permutation of the edges incident with $v$. The {\em Heffter-Edmonds-Ringel rotation principle} \cite[Theorem 3.2.4]{top} shows that every $2$-cell embedding of a graph $G$ in an orientable surface is determined (up to homeomorphisms of the surface) by its rotation system. The rotation system $\Pi$ determines the set of \emph{$\Pi$-facial walks}, each of which corresponds to the boundary of a face in the topological embedding. We will refer to these as the \emph{faces} of $\Pi$. For $2$-cell embeddings we have the famous Euler's Formula.

\begin{theorem}[Euler's Formula]
Let $G$ be a graph that is 2-cell embedded in a surface $S$. If $G$ has $n$ vertices, $e$ edges and $f$ faces in $S$, then
\begin{equation*}
\chi(S)=n-e+f.
\end{equation*}
\end{theorem}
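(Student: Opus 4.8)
The plan is to prove this by induction on the number of edges, keeping track simultaneously of the alternating count $n-e+f$ and of $\chi(S)$; it is convenient to carry the induction out for multigraphs (loops and parallel edges allowed), as the reductions below produce such graphs.

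Two reductions come first. A $2$-cell embedding forces $G$ to be connected: the boundary walk of a disk face lies entirely in one component of $G$, so if every face is a disk then $S$ decomposes into one clopen piece per component of $G$. Assume then that $G$ is connected, fix a spanning tree $T$, and contract its edges one at a time. Contracting a non-loop edge is a homeomorphism of $S$ that keeps every face a disk, drops $n$ and $e$ each by one, and leaves $f$ unchanged, so $n-e+f$ is unchanged; hence it suffices to treat the resulting one-vertex graph $G'$, a bouquet of $e'=e-(n-1)$ loops that is still $2$-cell embedded in $S$. The base case $e'=0$ is clear: the complement of a single point is a disk only for $S=\mathbb{S}_0$, where $f=1$ and $1-0+1=2=\chi(\mathbb{S}_0)$.

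For the inductive step, pick a loop $\ell$ of $G'$. If $\ell$ lies on the boundary of two distinct faces, delete it: the two disk faces merge into a single disk, giving a bouquet of $e'-1$ loops $2$-cell embedded in the same surface $S$ with $f-1$ faces, and the inductive hypothesis yields $\chi(S)=1-(e'-1)+(f-1)=1-e'+f$. Otherwise every loop of $G'$ lies on a single face; since the embedding's dual graph is connected, this forces $f=1$. Now $S$ is presented as one $2e'$-gon with its sides identified in pairs, all $2e'$ corners meeting at the single vertex --- a polygonal scheme of word length $2e'$ --- and the target identity $\chi(S)=2-e'=n-e+f$ is exactly the statement that such a scheme yields a surface of Euler characteristic $2-e'$. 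This is the technical heart of the proof. I would establish it by the standard normalization of the boundary word from the classification of surfaces: each elementary cut-and-paste move is a homeomorphism (so preserves $\chi$) and preserves the number of sides $2e'$, and the algorithm terminates at a one-vertex, one-face normal form $a_1b_1a_1^{-1}b_1^{-1}\cdots a_hb_ha_h^{-1}b_h^{-1}$ or $a_1a_1\cdots a_ka_k$, for which $n-e+f=2-e'$ equals the (known) Euler characteristic $2-2h$ or $2-k$ of the corresponding model surface.

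Alternatively --- and this bypasses the induction entirely --- one may observe that a $2$-cell embedding exhibits $S$ as a finite CW complex with exactly $n$, $e$, and $f$ cells in dimensions $0$, $1$, and $2$, and that the alternating cell count of any finite CW structure equals $\sum_i(-1)^i\operatorname{rank}H_i(S)=\chi(S)$, a topological invariant; then $n-e+f=\chi(S)$ is immediate. In either route, the only genuinely nontrivial ingredient is a basic fact about surfaces (the classification theorem, or the homology of surfaces); the graph-theoretic part of the argument is routine.
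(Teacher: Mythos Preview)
The paper does not prove Euler's Formula at all: it is stated as classical background (immediately after the Heffter--Edmonds--Ringel principle) and simply used, with the reader referred to \cite{top} for topological graph theory preliminaries. So there is no ``paper's own proof'' to compare against.

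Your argument is correct and is one of the standard textbook routes. The contraction of a spanning tree followed by loop-deletion induction is sound, and you are right that the only nontrivial input is the classification of surfaces (or, in your alternative, the invariance of the Euler characteristic under change of CW structure). One small remark: in the loop-deletion case you should also note that after deleting $\ell$ the remaining bouquet is still $2$-cell embedded --- the merged region is a disk because it is the union of two disks along a common boundary arc --- which you implicitly assume when invoking the inductive hypothesis. Otherwise nothing is missing.
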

Here $\chi(S)$ is the {\em Euler characteristic} of the surface $S$, where $\chi(S)=2-2h$ when $S=\mathbb{S}_h$ and $\chi(S)=2-c$ when $S=\mathbb{N}_c$.

Let $G$ be a simple graph. The {\em corresponding digraph} of $G$ is a random simple digraph $\mathcal{D}$ obtained from $G$ by randomly orienting each edge. Specifically, each digraph $D\in\mathcal{D}$ has $V(D)=V(G)$ and if $uv\in E(G)$ then either $\overrightarrow{uv}$ or $\overrightarrow{vu}$ is an edge of $D$, each with probability $1/2$, and the two events are exclusive. For a digraph $D$, we let $D^{-1}$ be the digraph obtained from $D$ by replacing each arc $\overrightarrow{xy}$ with the reverse arc $\overrightarrow{yx}$.

Given a digraph, a {\em blossom of length} $l$ with {\em center} $v$ and {\em tips} $\{v_1,v_2,\dots,v_l\}$ is a set $\mathcal{C}$ of $l$ directed cycles $\{ C_1,C_2,\dots,C_l\}$, where $\overrightarrow{v_{i}v},\overrightarrow{vv_{i+1}}\in C_i$, for $i=1,2,\dots,l$, with $v_{l+1}=v_1$. A \emph{$k$-blossom} is a blossom, all of whose elements are directed $k$-cycles. A blossom of length $l$ is {\em simple} if either $l\geq3$ or $l=2$ and $C_1\neq C_2^{-1}$. In this paper, we mainly consider $3$-blossoms and $4$-blossoms (see Figure \ref{fig:b}).

\begin{figure}[ht]
\centering
\begin{minipage}[t]{0.5\textwidth}
\centering
\includegraphics[width=2.8in]{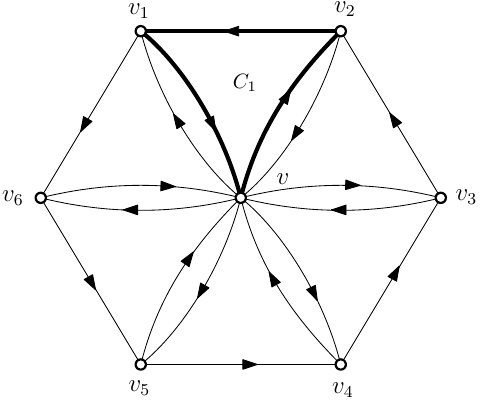}
\end{minipage}\hfill\begin{minipage}[t]{0.5\textwidth}
\centering
\includegraphics[width=2.8in]{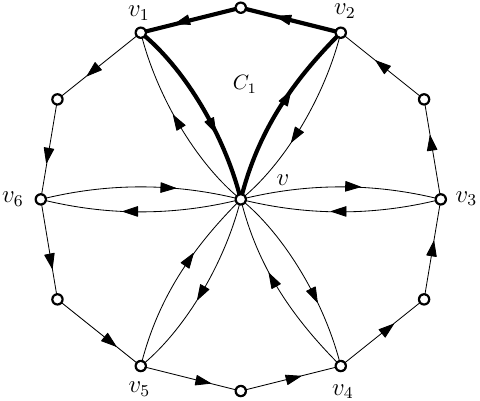}
\end{minipage}
\caption{A $3$-blossom and a $4$-blossom of length $6$. The cycle $C_1$ is shown by bold edges.}\label{fig:b}
\end{figure}

Let $\mathcal{C}$ be a family of arc-disjoint closed trails in $D\cup D^{-1}$. We say that $\mathcal{C}$ is {\em blossom-free} if no subset of $\mathcal{C}$ forms a blossom centered at some vertex. The following lemma is the main tool used to construct embeddings of a graph from blossom-free families of cycles.

\begin{lemma}\label{lem:rotation}
Let $G$ be a graph and let $D$ be the corresponding digraph. Suppose that $\mathcal{C}_1$ and $\mathcal{C}_2$ are sets of arc-disjoint cycles in $D$ and $D^{-1}$ (respectively) such that their union $\mathcal{C}_1\cup\mathcal{C}_2$ is blossom-free in $D\cup D^{-1}$. Then there exists a rotation system $\Pi$ of $G$ such that every cycle in $\mathcal{C}_1\cup\mathcal{C}_2$ is a face of $\Pi$.
\end{lemma}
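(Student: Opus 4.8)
The plan is to build the rotation system $\Pi$ greedily, one vertex at a time, using the local structure that the families $\mathcal{C}_1$ and $\mathcal{C}_2$ impose at each vertex. Fix a vertex $v\in V(G)$ and look at all arcs of $D\cup D^{-1}$ incident with $v$ that are used by cycles of $\mathcal{C}_1\cup\mathcal{C}_2$. Each cycle $C$ passing through $v$ contributes an ordered pair $(e,e')$ of edges of $G$ incident with $v$, namely the edge on which $C$ enters $v$ and the edge on which it leaves; because $\mathcal{C}_1$ lives in $D$ and $\mathcal{C}_2$ in $D^{-1}$, and the cycles within each family are arc-disjoint, these transitions are constrained. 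The first step is to record, for each $v$, the partial functions ``successor at $v$'' that these transitions prescribe, and to observe that the arc-disjointness within $\mathcal{C}_1$ (and within $\mathcal{C}_2$) means that at most one cycle of $\mathcal{C}_1$ uses any given arc of $D$ at $v$, so these partial functions are consistent as partial permutations of the edge-set at $v$. One then extends each such partial permutation arbitrarily to a full cyclic permutation $\pi_v$; the collection $\Pi=\{\pi_v\}$ is the candidate rotation system.

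The second step is to verify that every cycle in $\mathcal{C}_1\cup\mathcal{C}_2$ is a $\Pi$-facial walk. This is where the orientation bookkeeping must be done carefully. A $\Pi$-facial walk is traced by the standard face-tracing rule: arriving at a vertex $v$ along an edge, leave along the $\pi_v$-successor (or $\pi_v^{-1}$-successor, depending on the chosen convention for the side of traversal). For a cycle $C\in\mathcal{C}_1$, oriented coherently in $D$, each consecutive transition $(e,e')$ at a vertex is precisely one of the transitions we forced into $\pi_v$, so following $C$ in $D$ exactly reproduces one side of a face trace; for $C\in\mathcal{C}_2\subseteq D^{-1}$ the same holds with the reversed convention. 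The key point to check is that no \emph{other} edges get appended when tracing the face that starts along $C$: this is guaranteed because every transition of $\pi_v$ that lies on the ``$C$-side'' is accounted for by some cycle, and arc-disjointness prevents two cycles from claiming the same transition.

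The main obstacle — and the reason the blossom-free hypothesis is in the statement — is the following: in principle a single face trace could close up prematurely, returning to its starting edge-and-direction before exhausting all of $C$, which would mean $C$ is not itself a face but is instead a disjoint union of shorter facial walks, or, dually, several of the prescribed cycles could merge into one longer facial walk. A short analysis shows that such a pathology forces a cyclic sequence of cycles $C_1,\dots,C_l$ all meeting at a common vertex $v$, with $C_i$ handing off to $C_{i+1}$ at $v$ in the rotation — which is exactly a blossom centered at $v$ (and the case $l=2$, $C_1=C_2^{-1}$ is excluded by the definition of a \emph{simple} blossom, while a non-simple length-$2$ configuration does not cause a problem). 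Hence the blossom-free assumption rules out both premature closing and unwanted merging, and each $C\in\mathcal{C}_1\cup\mathcal{C}_2$ is a full face of $\Pi$. I expect the bulk of the work to be in making the face-tracing/orientation conventions precise enough that ``the bad configuration is a blossom'' becomes a clean combinatorial identity rather than a picture; once that is set up, the extension of partial permutations to $\pi_v$ is routine and the arc-disjointness arguments are immediate.
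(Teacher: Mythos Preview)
Your overall scheme --- record the local transitions each cycle imposes at a vertex, extend to a cyclic permutation $\pi_v$, then check that each $C\in\mathcal{C}_1\cup\mathcal{C}_2$ is traced as a face --- is exactly the paper's approach, which is stated there in two sentences via a ``compatibility'' condition. However, you have placed the blossom-free hypothesis at the wrong step, and this is a genuine gap in the outline as written.

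Once a rotation system $\Pi$ is compatible (in the paper's sense: for each $C$ through $v$ with incoming edge $e$ and outgoing edge $f$, $\pi_v(f)=e$), the face-tracing step is automatic and needs no further hypothesis. Starting the face trace along the first edge of $C$, at every vertex the compatibility constraint forces the trace to continue along the next edge of $C$; after one full traversal of $C$ you return to the starting edge in the starting direction. There is no possibility of premature closing or of merging with another cycle, because $\pi_v^{-1}$ is a function and its value on the incoming edge of $C$ is pinned down by compatibility. So the ``main obstacle'' you describe in your last paragraph does not exist.

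The blossom-free hypothesis is needed earlier, at the step you call ``routine'': extending the partial permutation at $v$ to a single cyclic permutation. The constraints at $v$ form a well-defined injective partial map (this is your arc-disjointness observation), but that partial map may already contain a cycle on a proper subset of the edges at $v$, and then it cannot be completed to one big cycle. Tracing such a cycle in the functional digraph, the constraints alternate between $\mathcal{C}_1$-transitions (out-edge $\mapsto$ in-edge of $D$) and $\mathcal{C}_2$-transitions (in-edge $\mapsto$ out-edge), and the corresponding sequence of directed cycles through $v$ is precisely a blossom centred at $v$. So: no blossom $\Leftrightarrow$ the partial permutation at every $v$ is a disjoint union of paths $\Leftrightarrow$ it extends to a single cycle $\pi_v$.

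One smaller correction: your remark that the non-simple length-$2$ configuration ($C_2=C_1^{-1}$) ``does not cause a problem'' is wrong. At a vertex $v$ of $C_1$ with the two incident edges $e,f$, the pair of constraints $\pi_v(f)=e$ and $\pi_v(e)=f$ forces a $2$-cycle in $\pi_v$, which is an obstruction whenever $\deg(v)>2$. This is why, in the applications later in the paper, the second matching $M'_i$ is chosen disjoint from $M_i^{-1}$ before one even begins removing simple blossoms.
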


\begin{proof}
We say a rotation system $\Pi=\{\pi_v\,|\,v\in V(G)\}$ is \emph{compatible} with $\mathcal{C}_1$ if for each cycle $C\in \mathcal{C}_1$ and each vertex $v\in V(C)$ with incident edges $e,f\in E(C)$ whose orientation in $D$ is such that $e$ precedes $f$ on $C$, we have $\pi_v(f)=e$. Under the assumptions of the lemma, it is easy to see that there exists a rotation system that is compatible with $\mathcal{C}_1$ and is also compatible with $\mathcal{C}_2$. Such rotation system will have every cycle in $\mathcal{C}_1 \cup \mathcal{C}_2$ as a face.
\end{proof}

Given a 2-cell embedding with rotation system $\Pi$, let $f_k(\Pi)$ be the number of faces of length $k$ in $\Pi$. For every $\varepsilon>0$, an {\em $\varepsilon$-near triangular embedding} and an {\em $\varepsilon$-near quadrangular embedding} is a rotation system of $G$ such that $3f_3(\Pi)\geq2(1-\varepsilon)|E(G)|$ and $4f_4(\Pi)\geq2(1-\varepsilon)|E(G)|$, respectively. Lemma \ref{lem:rotation} shows that in order to find an $\varepsilon$-near triangular embedding (or an $\varepsilon$-near quadrangular embedding), it suffices to find a large set of directed triangles (or directed 4-cycles) in $D\cup D^{-1}$ which is also blossom-free.

\subsection{Matchings in hypergraphs}

In order to construct an $\varepsilon$-near triangular (or quadrangular) embedding, one way is that we view the edge-set of $G$ as a vertex-set of a hypergraph $\H$. We randomly orient the edges of $G$ and then consider  all directed triangles (or directed 4-cycles) in $G$ as hyperedges of $\H$. Note that $\H$ is a $3$-uniform (or $4$-uniform) hypergraph. The following result from \cite{PS} (see also \cite{FR,genus} where its current formulation appears) on hypergraph matching theory will play an important role for constructing near-optimal embeddings of graphs. Recall that a \emph{matching} in a hypergraph $\H$ is a set of pairwise disjoint edges of $\H$. A matching $M\subseteq E(\H)$ is \emph{$\varepsilon$-near perfect} if all but $\varepsilon|V(\H)|$ vertices of $\H$ are contained in the edges in $M$.

\begin{theorem}\label{thm:hypergraphmatching}
Let $\varepsilon>0$ be a real number and $d\geq2$ be an integer. Then there exist a positive real number $\delta$ and an integer $N_0$ such that for every $N\geq N_0$ the following holds. If $\Delta$ is a real number and if $\mathcal{H}$ is a $d$-uniform hypergraph with $|V(\mathcal{H})|=N$ such that
\begin{enumerate}
\item $|\{x\in V(\mathcal{H})\ |\ (1-\delta)\Delta\leq \deg(x)\leq(1+\delta)\Delta\}|\geq(1-\delta)N$,
\item for every $x,y\in V(\mathcal{H})$ with $x\neq y$, $|\{e\in E(\mathcal{H})\mid x,y\in e\}|<\delta\Delta$, and
\item at most $\delta N\Delta$ hyperedges of $\mathcal{H}$ contain a vertex $v\in V(\mathcal{H})$ with $\deg(v)>(1+\delta)\Delta$,
\end{enumerate}
then $\mathcal{H}$ has an $\varepsilon$-near perfect matching. Moreover, for every matching $M$ in $\mathcal{H}$, there exists an $\varepsilon$-near perfect matching $M^\prime$ in $\mathcal{H}$ with $M\cap M^\prime=\varnothing$.
%then $\mathcal{H}$ has a matching of size at least $(1-\varepsilon)N/d$. Moreover, for every matching $M$ in $\mathcal{H}$, there exists a matching $M^\prime$ in $\mathcal{H}$ with $M\cap M^\prime=\varnothing$, and with $|M^\prime|\geq(1-\varepsilon)N/d$.
\end{theorem}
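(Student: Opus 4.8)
The plan is to deduce both assertions from the Pippenger--Spencer theorem on near-perfect matchings in nearly regular uniform hypergraphs \cite{PS}; the precise version stated here, with the relaxed hypotheses (1)--(3) and the ``moreover'' clause, is the form recorded in \cite{FR, genus}, so at the level of this paper one simply invokes it. For completeness I indicate how a self-contained argument runs.

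\textbf{The first assertion.} Conditions (1)--(3) are a robust version of the classical hypotheses ``every vertex has degree $(1\pm\delta)\Delta$ and every pair of vertices has codegree $o(\Delta)$''. I would first discard a small vertex set so that the remaining induced subhypergraph satisfies the classical hypothesis. Put into a set $B$ every vertex whose degree lies outside $[(1-\delta)\Delta,(1+\delta)\Delta]$; by (1) there are at most $\delta N$ of these, and by (3) for the high-degree ones together with the trivial bound $\deg(x)<\Delta$ for the low-degree ones, the number of hyperedges meeting $B$ is at most $2\delta N\Delta$. An averaging argument over those hyperedges shows that all but $O_d(\sqrt{\delta})\,N$ vertices outside $B$ lose fewer than $\sqrt{\delta}\,\Delta$ hyperedges when $B$ is deleted; adding the exceptional vertices to $B$ and iterating a bounded number of times yields a vertex set $B^{*}$ of size $o(N)$ (for $\delta$ small) such that $\H-B^{*}$ has all degrees $(1\pm o(1))\Delta$ while its codegrees have only decreased. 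Applying the Pippenger--Spencer theorem (equivalently, the R\"odl nibble) to $\H-B^{*}$ produces a matching leaving $o(N)$ of its vertices uncovered; together with $B^{*}$ this leaves at most $\varepsilon N$ vertices of $\H$ uncovered, provided $\delta$ is small enough in terms of $\varepsilon$ and $d$.

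\textbf{The ``moreover'' assertion.} First note that (1)--(3) force $\Delta>1/\delta$: by (2) every codegree is $<\delta\Delta$, so in particular $\delta\Delta>0$; were $\H$ edgeless all degrees would be $0$, contradicting (1) for $N\ge N_0$; hence $\H$ has an edge, which (as $d\ge2$) contains a pair of vertices of codegree at least $1$, and (2) then gives $1<\delta\Delta$. Now, given a matching $M$ in $\H$, delete its hyperedges to form $\H'=\H\setminus M$ on the same vertex set. Since $M$ is a matching, each vertex lies in at most one edge of $M$, so every degree drops by at most $1$, while codegrees and the quantity in (3) only decrease; as $1\le\delta\Delta$, replacing $\delta$ by $2\delta$ shows that $\H'$ satisfies (1)--(3) with constant $2\delta$. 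The first assertion, applied to $\H'$, yields an $\varepsilon$-near perfect matching $M'$ of $\H'$; since $V(\H')=V(\H)$ it covers all but $\varepsilon N$ vertices of $\H$, and $M\cap M'=\varnothing$ because no edge of $M$ survives in $\H'$.

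\textbf{Parameters and the main difficulty.} One fixes the constants by applying the first assertion with $2\delta$ in place of $\delta$ to obtain a threshold $\delta_{0}$ and an integer $N_{0}$, and then setting $\delta=\delta_{0}/2$; the same $\delta$ and $N_{0}$ serve for both assertions. If the Pippenger--Spencer theorem is treated as a black box, the only genuinely delicate step is the reduction inside the first assertion: verifying that deleting the degree-irregular vertices and the vertices ``poisoned'' by them removes only $o(N)$ vertices. This is precisely where hypothesis (3) is indispensable, and where the naive bound (summing codegrees over all irregular vertices) is too weak and must be replaced by the averaging argument over hyperedges sketched above; everything else is routine bookkeeping.
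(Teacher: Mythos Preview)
The paper does not prove this theorem at all; it is stated as a result from \cite{PS} in the formulation of \cite{FR,genus} and simply invoked. You correctly identify this at the outset of your proposal, so at the level of the paper your first sentence already matches what the paper does.

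Your supplementary sketch goes beyond the paper but is sound. The reduction in the first assertion (discard degree-irregular vertices, use (3) plus the trivial degree bound to count bad hyperedges, then average to bound the number of ``poisoned'' good vertices) is the standard cleaning step, and your identification of (3) as the indispensable hypothesis there is accurate. For the ``moreover'' clause, your argument---delete $M$, observe each degree drops by at most $1\le\delta\Delta$, so (1)--(3) persist with $2\delta$, then reapply the first part---is exactly the mechanism the paper itself relies on later: in Step~5 of the embedding algorithm the paper cites \cite[Theorem 3.3]{genus} for precisely the claim that $\mathcal{H}\setminus M$ still satisfies (1)--(3). One small quibble: your derivation of $\Delta>1/\delta$ tacitly assumes $\mathcal{H}$ has an edge; if $\mathcal{H}$ is edgeless the statement is vacuous anyway, so this is harmless but worth a half-line.
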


For nearly regular uniform hypergraphs, R\"odl and Thoma provide a random greedy algorithm \cite{RoTh96} (see also \cite{S95} for a different proof) which can output an $\varepsilon$-near perfect matching in $\H$. It is of interest to mention that Ford et al.\ \cite{FGKMT} strengthened this result by removing the requirement that the hypergraph be nearly regular.
%\begin{theorem}\label{thm:algmatching}
%Let $d\geq2$ be an integer and $\varepsilon$ $(0<\varepsilon<1/10)$ be a real number. Then there exist $\delta>0$, and an integer $N_0$ such that the following holds. Suppose that $\Delta>0$, and $\H$ is a $d$-uniform hypergraph of order $N\geq N_0$ that satisfies
%\begin{enumerate}
%\item $|\deg(x)-\Delta|<\delta\Delta$, for every $x\in V(\H)$, and
%\item for every $x,y\in V(\mathcal{H})$, $|\{e\in E(H)\mid x,y\in e\}|<\delta\Delta$.
%\end{enumerate}
%Then there exists a random greedy algorithm which stops after yielding a matching of size $\lceil \tfrac{n}{d}(1-\varepsilon)\rceil$ in $\H$ almost surely, and the expected running time is $T_\varepsilon=\Theta((\tfrac{1}{\varepsilon})^{d-1})\tfrac{N}{d(d-1)}$.
%\end{theorem}

%In Section 4, we will use this algorithm to output an $\varepsilon$-near perfect matching of $\H$ if $\H$ satisfies conditions (1)--(3) in Theorem \ref{thm:hypergraphmatching}.

\subsection{Cut metric and regular partitions}
\label{sec:cut metric}

Szemer{\'e}di's  Regularity Lemma \cite{S} is one of the most powerful tools in understanding large dense graphs. Szemer{\' e}di first used the lemma in his celebrated theorem on long arithmetic progressions in dense subset of integers \cite{S2}. Nowadays, the regularity lemma has many connections to other areas of mathematics, for example, analysis, number theory and theoretical computer science. For an overview of applications, we refer to \cite{Ana, Tao}. Regularity Lemma gives us a structural characterization of graphs. Roughly speaking, it says that every large graph can be partitioned into a bounded number of parts such that the graphs between almost every pair of parts is random-like. To make this precise we need some definitions.

Let $G$ be a graph and $X,Y\subseteq V(G)$. We define the {\em edge density} $d(X,Y)=\e(X,Y)/(|X||Y|)$. We say the pair $(X,Y)$ is {\em $\varepsilon$-regular} if for all $X^\prime\subseteq X$ and $Y^\prime\subseteq Y$ with $|X^\prime|\geq\varepsilon|X|$ and $|Y^\prime|\geq\varepsilon|Y|$, we have $|d(X^\prime,Y^\prime)-d(X,Y)|<\varepsilon$. A partition $\mathcal{P}=\{V_i\}_{i=1}^K$ of $V(G)$ is \emph{equitable} if for every $1\leq i<j\leq K$ we have $\big||V_i|-|V_j|\big|\leq1$. An equitable vertex partition $\{V_1,\dots,V_{K}\}$ with $K$ parts is {\em $\varepsilon$-regular} (or \emph{regular} if $\varepsilon$ is clear from the context) if all but at most $\varepsilon K^2$ pairs of parts $(V_i,V_j)$ ($1\leq i<j\leq K$) are $\varepsilon$-regular.

\begin{theorem}[Szemer\'edi's Regularity Lemma]\label{thm:regularity}
There exists a computable function $s:\NN\times \RR^+\to\NN$ such that the following holds. For every $\varepsilon>0$ and every positive integer $M$, every graph $G$ of order $n\geq M$ has an $\varepsilon$-regular partition into $K$ parts, where $M\leq K\leq s(M,\varepsilon)$.
\end{theorem}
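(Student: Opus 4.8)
The plan is to run the classical \emph{energy-increment} (index-increment) argument. First I would introduce, for an equitable partition $\mathcal{P}=\{V_1,\dots,V_K\}$ of $V(G)$, its \emph{index}
$$
  q(\mathcal{P})=\sum_{i=1}^{K}\sum_{j=1}^{K}\frac{|V_i|\,|V_j|}{n^2}\,d(V_i,V_j)^2,
$$
which always lies in $[0,1]$. The whole argument rests on two facts about $q$. (i) \textbf{Monotonicity under refinement:} if $\mathcal{P}'$ refines $\mathcal{P}$, then $q(\mathcal{P}')\ge q(\mathcal{P})$; this is the Cauchy--Schwarz inequality applied within each cell. (ii) \textbf{Defect version:} if a pair $(V_i,V_j)$ is not $\varepsilon$-regular, with witnessing subsets $V_i'\subseteq V_i$, $V_j'\subseteq V_j$, then replacing $V_i,V_j$ by the splits $\{V_i',V_i\setminus V_i'\}$ and $\{V_j',V_j\setminus V_j'\}$ raises the index by at least $\varepsilon^4\,|V_i|\,|V_j|/n^2$.

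Next I would combine these as follows. Suppose $\mathcal{P}$ is an equitable partition into $K$ parts that is \emph{not} $\varepsilon$-regular; then at least $\varepsilon K^2$ of the pairs $(V_i,V_j)$ are irregular, and passing to the common refinement of all of the corresponding two-part splits yields a partition $\mathcal{P}'$ into at most $K\cdot 4^K$ parts with
$$
  q(\mathcal{P}')\ \ge\ q(\mathcal{P})+\varepsilon K^2\cdot\frac{\varepsilon^4}{K^2}\ =\ q(\mathcal{P})+\varepsilon^5.
$$
Starting from an arbitrary equitable partition into $m$ parts and iterating this step, the index increases by $\varepsilon^5$ each time, so after at most $\lceil\varepsilon^{-5}\rceil$ iterations — since $q\le 1$ — we must land on an $\varepsilon$-regular partition. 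Each iteration only increases the number of parts and sends $K$ to at most $K\cdot 4^K$, so the final number of parts lies in $[m,s(m,\varepsilon)]$, where $s(m,\varepsilon)$ is the result of iterating the (manifestly computable) map $K\mapsto K\cdot 4^K$ exactly $\lceil\varepsilon^{-5}\rceil$ times starting from $K=m$; this is the advertised tower-type bound.

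The hard part — or rather, the only genuinely fiddly part — is maintaining \emph{equitability}, since the refinements above create cells of wildly different sizes. I would handle this in the usual way: after each refinement, regularize by chopping every cell into blocks of a common size $\lfloor n/L\rfloor$ for a large $L$ depending only on $\varepsilon$ and the current number of parts, sweeping the bounded remainders into one exceptional cell (or redistributing them to keep all sizes within $1$ of each other). Atoms of size $o(n/K)$ change $q$ by only $o(1)$, so this regularization costs a negligible amount of index, which is absorbed by carrying out the whole argument with $\varepsilon/2$ in place of $\varepsilon$ and taking $L$ sufficiently large. The bookkeeping needed to keep the partition equitable throughout is routine but must be tracked carefully; no new idea is required beyond it.
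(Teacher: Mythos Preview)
Your sketch is the standard energy-increment proof of Szemer\'edi's Regularity Lemma and is essentially correct. However, note that the paper does \emph{not} give its own proof of this theorem: it is stated as a known result, cited from Szemer\'edi's original paper, and only the tower-type nature of $s(m,\varepsilon)$ is commented upon afterwards. So there is no ``paper's own proof'' to compare against; your proposal simply supplies the classical argument that the paper omits.
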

For $\varepsilon>0$, a partition obtained from the regularity lemma is also called an $\varepsilon$-{\em regular partition}. In the original proof of the regularity lemma, the bound $s(m,\varepsilon)$ on the number of parts is a tower of twos\footnote{We define the \emph{tower function} $T(n)$ of \emph{height} $n$ as follows: $T(1)=2$, and $T(n)=2^{T(n-1)}$ for $n\geq2$.} of height $O(1/\varepsilon^2)$. Unfortunately, this is not far from the truth. Gowers \cite{G} showed that such an enormous bound is indeed necessary. In this paper, we will always assume that $m\gg 1/\varepsilon$, so $K\gg1/\varepsilon$ as well.

Recall that the \emph{cut metric} $\d$ between two (edge-weighted) graphs $G$ and $H$ on the same vertex set $V$ is defined by
\begin{equation}\label{cut1}
\d(G,H)=\max_{U,W\subseteq V}\frac{|\e_G(U,W)-\e_H(U,W)|}{|V|^2}.
\end{equation}
Here $\e_G(U,W)$ denotes the total weight of the edges with one endvertex in $U$ and the other endvertex in $W$. 

When $G$ and $H$ are bipartite graphs with bipartition $V_1\sqcup V_2$, we define the cut metric as
\begin{equation}\label{cut2}
\d(G,H) = \max_{U\subseteq V_1, W\subseteq V_2}\frac{|\e_G(U,W)-\e_H(U,W)|}{|V_1||V_2|}.
\end{equation}
If $|V_1|\sim|V_2|$ is large, definitions (\ref{cut1}) and (\ref{cut2}) differ only by a small constant factor.

The cut distance is ``large", i.e.\ $\d(G,H) > \varepsilon$, if and only if there exists subsets $X\subseteq V_1, Y\subseteq V_2$ with $|\e_G(X,Y)-\e_H(X,Y)| > \varepsilon |V_1||V_2|$. Let us observe that in such a case we have $|X|\cdot|Y| > \varepsilon |V_1|\cdot|V_2|$. In particular, $|X| > \varepsilon |V_1|$ and $|Y| > \varepsilon |V_2|$.

If $G$ and $H$ are graphs of the same order and $\sigma:V(H)\to V(G)$ is a bijection, we consider the graph $H^\sigma$ isomorphic to $H$ whose vertex-set is $V(G)$ and edges are $\{\sigma(e)\mid e\in E(H)\}$. Then we define the cut distance $\d(G,H)$ as the minimum of $\d(G,H^\sigma)$ taken over all bijections $\sigma:V(H)\to V(G)$.

By using the language of cut metrics, a weaker condition than that of the regular partition $\mathcal{P}=\{V_1,\dots,V_K\}$ is that, for all but at most $\varepsilon K^2$ pairs ($V_i,V_j$), we have $\d(G[V_i\sqcup V_j], K(V_i,V_j;p_{ij}))<\varepsilon$, where $K(V_i,V_j;p_{ij})$ is the complete bipartite graph defined on the vertex set $V_i\sqcup V_j$ with all edges having weight $p_{ij}=d_G(V_i,V_j)$.

The cut distance gives us a way to describe the similarity between two graphs, and it is widely used in graph limit theory \cite{graphlimit}. Another widely used way to describe the similarity between two large graphs is comparing the homomorphism densities of small graphs. Let $\hom(F,G)$ denote the number of homomorphisms of $F$ into $G$, i.e. the number of mappings $\phi: V(F)\to V(G)$ such that for each $xy\in E(F)$, $\phi(x)\phi(y)\in E(G)$. Then we define the {\em homomorphism density}:
\[
t(F,G)=\frac{\hom(F,G)}{|V(G)|^{|V(F)|}}.
\]

To compare the cut distance and homomorphism densities, we have the following fundamental relation. For the more general version on graphons, see \cite{graphlimit, LS}.
\begin{lemma}[Counting Lemma]\label{lem:counting}
Let $G$ and $G^\prime$ be two graphs defined on the same vertex set. Then for any graph $F$,
\[
|t(F,G)-t(F,G^\prime)|\leq|E(F)|\, \d(G,G^\prime).
\]
\end{lemma}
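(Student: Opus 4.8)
# Proof Proposal for the Counting Lemma

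The plan is to prove the inequality by a telescoping/hybrid argument, replacing one coordinate of the homomorphism count at a time and controlling each swap by the cut distance. Write $V=V(G)=V(G')$ and $n=|V|$, and let $F$ have vertex set $\{1,\dots,k\}$ with edge set $E(F)$. The homomorphism density is
\[
t(F,G)=\frac{1}{n^{k}}\sum_{\phi:V(F)\to V}\prod_{ij\in E(F)}\mathbf{1}[\phi(i)\phi(j)\in E(G)].
\]
It is cleanest to work with edge-weighted graphs directly: replace the indicator $\mathbf{1}[xy\in E(G)]$ by the weight $w_G(x,y)\in[0,1]$ (which is $0/1$ in the simple-graph case), and similarly $w_{G'}$. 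Since every factor in the product lies in $[0,1]$, we may use the elementary identity
\[
\prod_{e\in E(F)} a_e - \prod_{e\in E(F)} b_e
= \sum_{e\in E(F)} \Bigl(\prod_{e'<e} b_{e'}\Bigr)(a_e-b_e)\Bigl(\prod_{e'>e} a_{e'}\Bigr),
\]
for any fixed ordering of $E(F)$, with all $a_e,b_e\in[0,1]$. The point is that the ``context'' factors $\prod_{e'<e}b_{e'}$ and $\prod_{e'>e}a_{e'}$ are each bounded by $1$ in absolute value.

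Substituting and summing over $\phi$, it suffices to bound, for each single edge $e=ij\in E(F)$,
\[
\frac{1}{n^{k}}\Bigl|\sum_{\phi} \Bigl(\text{bounded context}\Bigr)\,\bigl(w_G(\phi(i),\phi(j))-w_{G'}(\phi(i),\phi(j))\bigr)\Bigr|.
\]
Fix the values of $\phi$ on all vertices other than $i$ and $j$. The context factor then becomes a function of $\phi(i)$ and $\phi(j)$ only — call it $c(\phi(i),\phi(j))$ — taking values in $[-1,1]$, and we are left estimating $\frac{1}{n^2}\sum_{x,y\in V} c(x,y)\,(w_G(x,y)-w_{G'}(x,y))$. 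Now the standard trick: write $c(x,y)=\int_0^1 \mathbf{1}[c(x,y)\ge t]\,dt - \int_0^1 \mathbf{1}[c(x,y)\le -t]\,dt$, so that $c$ is an average (over $t$) of differences of $\{0,1\}$-valued functions on $V\times V$. Each such $\{0,1\}$-valued function is an indicator of an arbitrary subset $S\subseteq V\times V$, but by summing over rows and columns separately one reduces to ``cut-type'' sets; more directly, for any $S\subseteq V\times V$ one has $\bigl|\sum_{(x,y)\in S}(w_G(x,y)-w_{G'}(x,y))\bigr| \le n^2\,\d(G,G')$, because the maximum over $U,W\subseteq V$ in the definition of $\d$ dominates the maximum over product sets $U\times W$, and a general set $S$ can be handled by the four-point bound $|\sum_{S} f| \le 4\max_{U,W}|\sum_{U\times W} f|$ when one is not careful, but in fact for this argument it is enough to take $c$ already of the form $\mathbf{1}_U(x)\mathbf{1}_W(y)$ by noting the context factor itself splits. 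I would present the cleaner route: observe that $c(\phi(i),\phi(j))$ factors as a product $c_i(\phi(i))\cdot c_j(\phi(j))$ only when $i$ and $j$ share no other common neighbour structure — which need not hold — so instead I will simply invoke the generic bound $\bigl|\sum_{(x,y)\in S} (w_G-w_{G'})\bigr|\le n^2\d(G,G')$ for every $S\subseteq V\times V$, which follows from (\ref{cut1}) together with the elementary fact that an arbitrary bilinear-type sum over a subset is controlled by its rectangular restrictions.

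Combining the pieces: each of the $|E(F)|$ edge-swaps contributes at most $\d(G,G')$ after averaging over the remaining coordinates and over the parameter $t$, so $|t(F,G)-t(F,G')|\le |E(F)|\,\d(G,G')$, as claimed. The main obstacle — and the step deserving the most care — is the reduction from a general bounded ``context weight'' $c(x,y)\in[-1,1]$ to the rectangular sets $U\times W$ appearing in the definition of the cut norm; here one must be honest about whether a factor of $4$ (or $2$) is lost, and the standard resolution is that a $[-1,1]$-valued function is an average of indicators of half-spaces $\{c\ge t\}$, and the sum of $w_G-w_{G'}$ over \emph{any} set is bounded by $n^2\d(G,G')$ once one checks that the cut norm controls the $\ell_\infty\to\ell_1$ operator norm of the difference, which is exactly the content of (\ref{cut1}). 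Everything else is bookkeeping with the telescoping identity and the fact that all weights lie in $[0,1]$.
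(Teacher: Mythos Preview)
The paper does not supply a proof of this lemma; it simply cites \cite{graphlimit,LS}. Your telescoping/hybrid approach is exactly the standard argument, and the overall architecture is right. But there is a genuine gap at the step you yourself flag as ``the main obstacle'': your two fallback claims are both false. First, it is \emph{not} true that $\bigl|\sum_{(x,y)\in S}(w_G-w_{G'})(x,y)\bigr|\le n^2\,\d(G,G')$ for an arbitrary subset $S\subseteq V\times V$; take $w_G-w_{G'}$ to be a random $\pm1$ matrix, whose cut norm is $O(n^{3/2})$ while the sum over $S=\{(x,y):(w_G-w_{G'})(x,y)=+1\}$ is $\Theta(n^2)$. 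Second, the ``four-point bound'' you mention applies to bilinear forms $\sum_{x,y}a(x)b(y)f(x,y)$ with $a,b\in[-1,1]$, not to sums over arbitrary sets.

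The fix is that your worry is unfounded: the context factor \emph{does} factor. Fix the edge $e=ij\in E(F)$ and fix $\psi=\phi|_{V(F)\setminus\{i,j\}}$. Every other edge $e'\ne e$ of $F$ touches at most one of $i,j$ (since $F$ is simple), so the context product splits as $d_\psi\cdot a_\psi(x)\cdot b_\psi(y)$, where $d_\psi\in[0,1]$ collects the edges touching neither $i$ nor $j$, $a_\psi(x)\in[0,1]$ collects the edges through $i$, and $b_\psi(y)\in[0,1]$ collects the edges through $j$. Now write $a_\psi(x)=\int_0^1\mathbf{1}[a_\psi(x)\ge s]\,ds$ and similarly for $b_\psi$, so that
\[
\frac{1}{n^2}\Bigl|\sum_{x,y}a_\psi(x)b_\psi(y)\bigl(w_G(x,y)-w_{G'}(x,y)\bigr)\Bigr|
\le \int_0^1\!\!\int_0^1 \frac{\bigl|\e_G(U_s,W_t)-\e_{G'}(U_s,W_t)\bigr|}{n^2}\,ds\,dt
\le \d(G,G'),
\]
with $U_s=\{x:a_\psi(x)\ge s\}$ and $W_t=\{y:b_\psi(y)\ge t\}$. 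Averaging over $\psi$ (using $0\le d_\psi\le 1$) and summing over the $|E(F)|$ edges gives the lemma with the stated constant and no spurious factor of $4$.
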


\subsection{Quasirandomness}
\label{sec:quasirandomness}

Quasirandom graphs are graphs that share many properties with random graphs. The definition of quasirandomness was first introduced in a seminal paper by Chung, Graham and Wilson \cite{quasi}. In that paper, they listed several equivalent definitions of quasirandom graphs, but essentially, quasirandom graphs are graphs close to random graphs in the sense of the cut distance. 

We will introduce a general form of quasirandomness. In order to avoid to use probability, we define the corresponding edge-weighted (complete) graphs.

We will focus on a more general setting of quasirandom graphs. Let $P$ be an $m\times m$ symmetric matrix with non-negative entries $p_{ij}$ ($i,j\in [m]$), where $0\le p_{ij} \le 1$. Let $K(n^{(m)},P)$ be the complete edge-weighted graph, which is defined on the vertex set $V_1\sqcup\dots\sqcup V_m$, and for every $i\in[m]$ we have $|V_i|=n$ and the weight of edges between $V_i$ and $V_j$ is given by the $(i,j)$-entry $p_{ij}$ of $P$. Although we can use the same method to deal with more general graphs, for the convenience we only consider the case that each part of the graph has the same size and we will assume that the diagonal of $P$ is $0$ if $m\ge 2$. Then $K(n^{(m)},P)$ is actually the complete $m$-partite graph. An exception to the latter assumption is for $m=1$, when $p_{11}$ is nonzero.

Recall \cite{graphlimit} that a {\em graphon} is a symmetric measurable function $W:[0,1]^2\to [0,1]$. Let $\widetilde{K}_m=K_m(P)$ be the {\em quotient graph} of $K(n^{(m)},P)$, that is, $\widetilde{K}_m$ has $m$ vertices, and the weight of the edge $ij$ is $p_{ij}$. Let $W_m=W_m(P): [0,1]^2 \to [0,1]$ be the {\em step function} of $\widetilde{K}_m$, that means $\inner{W_m}{(\frac{i-1}{m},\frac{i}{m}]\times(\frac{j-1}{m},\frac{j}{m}]}=p_{ij}$. We say that a sequence of graphs $\{G_n\}$ is {\em $W_m$-quasirandom} if $G_n\to W_m$ as $n\to\infty$ where the convergence is in the cut-distance metric, i.e.
$$
   \d(G_n, \widetilde{K}_m) \to 0, \quad \hbox{ as } n\to \infty.
$$
Given a graph $G$ of order $mn$, we say $G$ is {\em $\varepsilon$-$W_m$-quasirandom} if $\d(G,K(n^{(m)},P))<\varepsilon$, and we write $G\in\Q(n^{(m)},P,\varepsilon)$ in such a case. If $m=1$ and we have $P=[p]$ with $0\le p\le 1$, we write just $\Q(n,p,\varepsilon)$ and we observe that this is precisely the set of quasirandom graphs of Chung et al.~\cite{quasi}.

If $G\in \Q(n^{(m)},P,\varepsilon)$, we may assume that the vertex-set of $G$ is the same as the vertex-set of $K(n^{(m)},P)$. This means that $V(G)$ is partitioned as $V_1,\dots,V_m$, where $|V_i| = n$ for every $i\in[m]$, and that $\d(G,K(n^{(m)},P)) \le \varepsilon$ under the definition (\ref{cut1}).

\begin{lemma}\label{lem:QR B_ij and T_ijk}
Let $G\in\Q(n^{(m)},P,\varepsilon)$, and let $V_1\sqcup\cdots\sqcup V_m$ be an equitable vertex-partition of $V(G)$ corresponding to the vertex partition in $K(n^{(m)},P)$. For any distinct $i,j,k\in [m]$, let $B_{ij}$ be the complete bipartite graph on $V_i\sqcup V_j$ with all edges of weight $p_{ij}$, and let $T_{ijk}$ be the complete tripartite graph $B_{ij} \cup B_{jk} \cup B_{ik}$ on the vertices $V_i\sqcup V_j \sqcup V_k$. Then we have
\[
\d(G[V_i,V_j],B_{ij}) \le \tfrac{1}{4}m^2\varepsilon \quad \hbox{and} \quad \d(G[V_i,V_j,V_k],T_{ijk}) \le \tfrac{1}{9} m^2\varepsilon.
\]
\end{lemma}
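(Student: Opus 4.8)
The plan is to deduce both bounds from the hypothesis $\d(G, K(n^{(m)},P)) \le \varepsilon$ by ``restricting attention'' to the relevant small set of parts and tracking how the normalization in the definition of $\d$ changes. Recall that, by definition~(\ref{cut1}), $\d(G, K(n^{(m)},P)) \le \varepsilon$ means that for all $U, W \subseteq V(G)$ we have $|\e_G(U,W) - \e_{K(n^{(m)},P)}(U,W)| \le \varepsilon |V(G)|^2 = \varepsilon (mn)^2$. Note also that $B_{ij}$ and $T_{ijk}$ agree with $K(n^{(m)},P)$ on the pairs of parts they involve: $\e_{B_{ij}}(X,Y) = \e_{K(n^{(m)},P)}(X,Y)$ whenever $X \subseteq V_i$, $Y \subseteq V_j$, and similarly $\e_{T_{ijk}}$ agrees with $\e_{K(n^{(m)},P)}$ for subsets drawn from $V_i \sqcup V_j \sqcup V_k$, because $P$ has zero diagonal and the off-diagonal weights match. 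This is the key structural observation that lets the ambient estimate descend to the subgraphs.

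For the bipartite bound, I would take arbitrary $X \subseteq V_i$ and $Y \subseteq V_j$ and estimate $|\e_{G[V_i,V_j]}(X,Y) - \e_{B_{ij}}(X,Y)|$. Since $G[V_i,V_j]$ is the subgraph of $G$ induced on those two parts, $\e_{G[V_i,V_j]}(X,Y) = \e_G(X,Y)$ for $X\subseteq V_i$, $Y\subseteq V_j$ (there are no edges to lose because $X$ and $Y$ lie in distinct parts and the only edges counted are between $X$ and $Y$). Hence the difference equals $|\e_G(X,Y) - \e_{K(n^{(m)},P)}(X,Y)| \le \varepsilon (mn)^2$ by the cut-distance hypothesis. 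Now divide by the normalizing factor for the bipartite cut metric~(\ref{cut2}), which is $|V_i||V_j| = n^2$: we get $|\e_{G[V_i,V_j]}(X,Y) - \e_{B_{ij}}(X,Y)| / (n^2) \le \varepsilon (mn)^2 / n^2 = m^2\varepsilon$. Taking the max over $X, Y$ gives $\d(G[V_i,V_j], B_{ij}) \le m^2\varepsilon$. The claimed bound $\tfrac14 m^2\varepsilon$ requires the slightly sharper constant coming from the remark after~(\ref{cut2}) that the two definitions differ by a factor $\sim \tfrac14$ when the parts are large and balanced — more precisely, one should use that for a balanced bipartition of a vertex set of size $N = 2n$, $|V_1||V_2| = n^2 = N^2/4$, so the bipartite normalization~(\ref{cut2}) is $4$ times the ambient normalization~(\ref{cut1}) applied to the full graph on $N$ vertices; combining this with the descent above yields the factor $\tfrac14$.

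For the tripartite bound the argument is the same but with three parts: for $U, W \subseteq V_i \sqcup V_j \sqcup V_k$ one has $\e_{T_{ijk}}(U,W) = \e_{K(n^{(m)},P)}(U \cap (V_i\sqcup V_j\sqcup V_k), W\cap\cdots)$, and restricting $U, W$ to this $3n$-element set, the cut-distance hypothesis gives a discrepancy of at most $\varepsilon (mn)^2$, while the natural normalization for a graph on $3n$ vertices is $(3n)^2 = 9n^2$; dividing gives $\varepsilon (mn)^2 / (9n^2) = \tfrac19 m^2\varepsilon$. The main subtlety — and the step I would be most careful about — is bookkeeping the three slightly different normalizations (the global one in~(\ref{cut1}) with $|V(G)|^2 = (mn)^2$, the bipartite one in~(\ref{cut2}) with $|V_1||V_2|$, and the implicit one for the tripartite induced subgraph on $3n$ vertices), and in particular making sure the reduction from ``all $U,W\subseteq V(G)$'' to ``$U,W$ inside the chosen parts'' is valid: this is fine because the supremum defining $\d(G, K(n^{(m)},P))$ ranges over \emph{all} subsets, so in particular over subsets of $V_i\sqcup V_j$ or $V_i\sqcup V_j\sqcup V_k$, and on such subsets the induced subgraphs $G[V_i,V_j]$ and $G[V_i,V_j,V_k]$ agree with $G$ while $B_{ij}$ and $T_{ijk}$ agree with $K(n^{(m)},P)$.
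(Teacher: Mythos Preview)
Your proof is correct and follows essentially the same approach as the paper's: restrict the test sets in the global cut-distance bound to the relevant parts, then renormalize by the size of the smaller vertex set. The paper is slightly more direct---it applies definition~(\ref{cut1}) with $|V(B_{ij})|^2=(2n)^2$ (respectively $(3n)^2$) from the start rather than passing through definition~(\ref{cut2}) and then correcting the constant---but the content is identical.
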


\begin{proof}
We will prove the bound for $B_{ij}$. The proof for $T_{ijk}$ is similar. Let $V=V(G)$. Note that $|V|=nm$. For every $X\subseteq V_i$ and $Y\subseteq V_j$, using the definition (\ref{cut1}) of the cut distance, we have
\[
\Big|\e_G(X,Y)-|X||Y|\,p_{ij}\Big| \le \d(G,K(n^{(m)},P))|V|^2 \le 
\varepsilon |V|^2 = \varepsilon m^2n^2.
\]
By using the fact that $B_{ij}$ has $2n$ vertices, this shows that $\d(G[V_i,V_j],B_{ij}) \le m^2\varepsilon/4$.
\end{proof}

Given a graph $G$ of order $N$ and a partition $\mathcal{P}=\{V_1,V_2,\dots,V_m\}$, we say $\mathcal{P}$ is an {\em $\varepsilon$-quasirandom partition} if for every $i\in[m]$ we have $|V_i|=N/m$, and for every $1\leq i<j\leq m$, $\d\big(G[V_i\cup V_j],K((N/m)^{(2)},d(V_i,V_j))\big)<\varepsilon$. Clearly, one can obtain an $\varepsilon$-quasirandom partition from an $\varepsilon$-regular partition $\mathcal{P}^\prime=\{V_1,\dots,V_m\}$. By removing all the edges between irregular pairs, we obtain an $\varepsilon$-quasirandom partition of the resulting graph.

A regular partition of a large graph can be obtained by means of an efficient algorithm. See, for example, \cite{N94}. Recently Tao \cite{Tao10} provided a probabilistic algorithm which produces an $\varepsilon$-regular partition with high probability in constant time (depending on $\varepsilon$). Frieze and Kannan \cite{FK} found a quadratic-time algorithm for constructing an $\varepsilon$-quasirandom partition. In this paper, we will use a more recent deterministic PTAS due to Fox et al.~\cite{FLZ}.

\begin{theorem}[\cite{FLZ}]\label{thm:algpartition}
There exists an algorithm, which, given $\varepsilon>0$, and $0<\alpha<1$, an integer $m$, and a graph $G$ on $n$ vertices that admits an $\varepsilon$-regular partition with $m$ parts, outputs a $(1+\alpha)\varepsilon$-regular partition of $G$ into $m$ parts in time $O_{\varepsilon,\alpha,m}(n^2)$.
\end{theorem}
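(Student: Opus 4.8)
The plan is to follow the random-sampling strategy of Fox, Lov\'asz, and Zhao: reduce the task to a brute-force computation on a constant-sized induced subgraph, and then lift the resulting partition back to $G$. Correctness rests on a two-directional ``sampling preserves regularity'' principle, while the running time is dominated by a single pass over the adjacency information of $G$.

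First I would fix a sufficiently large constant $q=q(\varepsilon,\alpha,m)$ and choose a uniformly random set $W\subseteq V(G)$ with $|W|=q$. The key structural claim, call it (A), is that whenever $G$ has an $\varepsilon$-regular partition $\mathcal{P}=\{V_1,\dots,V_m\}$, then with high probability the induced partition $\{V_i\cap W\}_{i=1}^m$ is a $(1+\tfrac14\alpha)\varepsilon$-regular, almost-equitable partition of $G[W]$. Equitability and concentration of the pairwise densities $d(V_i\cap W,V_j\cap W)$ around the true densities $d(V_i,V_j)$ follow from Chernoff-type bounds. The regularity of the sampled pairs is more delicate: one uses that $\varepsilon$-regularity of a pair $(X,Y)$ is detected, up to small slack, already by subsets of a single prescribed size $\lceil\varepsilon|X|\rceil$, so only finitely many ``irregularity certificates'' must be controlled, and a union bound (after a preliminary additional sampling step) shows that any certificate surviving the restriction to $W$ would essentially have been present in $G$ already. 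Granting (A), the algorithm enumerates all $m^q$ partitions of $W$ into $m$ blocks and, in $O_{\varepsilon,\alpha,m}(1)$ time per partition, tests each for $(1+\tfrac14\alpha)\varepsilon$-regularity in $G[W]$; the search succeeds, and we fix one such partition $W=W_1\cup\dots\cup W_m$.

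Next I would lift this partition to $G$. For each $v\in V(G)$ I compute the degree profile $\bigl(d(v,W_1),\dots,d(v,W_m)\bigr)$, at a cost of $O(q)$ adjacency lookups per vertex and hence $O(n^2)$ overall, assign $v$ to the class $U_{\sigma(v)}$ whose reference set $W_{\sigma(v)}$ best fits this profile, and finally move $O(\alpha n)$ vertices between classes to make the partition equitable, which perturbs every pairwise density by only $O(\alpha)$. The second structural claim, (B), is the converse: for a random $W$, any $(1+\tfrac14\alpha)\varepsilon$-regular partition of $G[W]$ pulls back through this profile rule to a $(1+\alpha)\varepsilon$-regular partition of $G$ with high probability; it is proved by the same certificate-counting scheme run in the direction $G[W]\to G$, since a large irregular pair in the lifted partition of $G$ would, upon re-sampling $W$, produce a not-too-small irregular pair in $G[W]$, contradicting regularity of the chosen sample partition. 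A random $W$ satisfies (A) and (B) simultaneously with high probability, so a good $W$ exists; the choice of $W$ is then derandomized as in \cite{FLZ}, and the whole procedure runs in time $O_{\varepsilon,\alpha,m}(n^2)$.

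The hard part is establishing (A) and (B) together, i.e.\ that $\varepsilon$-regularity is stable under both restriction to a random sample and extension back from it. This is intrinsically subtle because regularity is quantified over all linear-sized subsets, so a sample can conceal a small irregular pair or manufacture a spurious one; the resolution is the multi-level sampling plus certificate-counting argument now standard in property-testing analyses of regularity. Everything else --- the exhaustive search on $W$, the profile-based lifting, and the final balancing --- is routine once (A) and (B) are in hand.
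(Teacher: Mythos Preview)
The paper does not prove this theorem; it is stated with attribution to \cite{FLZ} and used as a black box, with no accompanying proof or sketch. There is therefore nothing in the paper to compare your proposal against.

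For what it is worth, your outline does track the Fox--Lov\'asz--Zhao strategy at a high level: sample a constant-size subset, brute-force a regular partition of the induced subgraph, and lift it back to $G$. So the proposal is in the right spirit relative to the cited source, but the present paper itself offers no argument for this statement beyond the citation.
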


\subsection{Notation}
\label{sec:notation}

We will use standard graph theory terminology and notation \cite{Diestel}, as well as that of topological graph theory and graph limit theory as given in \cite{top} and \cite{graphlimit}, respectively. We will also use the following notation: $A(n)\sim B(n)$ means $\lim_{n\to\infty}A(n)/B(n)=1$, and $A(n)\ll B(n)$ means $\lim_{n\to\infty}A(n)/B(n)=0$. By $X\sqcup Y$ we denote the disjoint union of $X$ and $Y$.
%, and we set $X\oplus Y = (X\times Y )\sqcup(Y\times X)$. 
We say that an event $E(n)$ happens \emph{asymptotically almost surely} (abbreviated \emph{a.a.s.}) if $\P(E(n))\to1$ as $n\to\infty$.

Given a graph $G$, suppose $X,Y\subseteq V(G)$. We use $E(X,Y)$ to denote the set of edges between $X$ and $Y$, and $\e(X,Y)=|E(X,Y)|$. We also use $\e(G)$ to denote $|E(G)|$. For every $u,v\in V(G)$, let $\n(u,v)$ be the number of common neighbors of $u$ and $v$, and let $\p(u,v)$ be the number of paths of length $3$ between $u$ and $v$. We use $[n]$ to denote the set of integers $\{1,2,\dots,n\}$.

%%%%%%%%%%%%%%%%%%%%%%%%%%%%%%%%%%%%%%%%%%%%%

\section{Genus of multipartite quasirandom graphs}%

Throughout this section, we assume that our graph is sufficiently large and that $\varepsilon$ is small. The section is divided into three subsections. In the first subsection, we will approximate the genus of a quasirandom graph, which is a graph with a small cut distance to $K(n,p)$, the complete graph with all edges weighted by $p$. In the second subsection, we will approximate the genus of a bipartite quasirandom graph. In the final subsection, we will use the results from the first two subsections to approximate the genus of a multipartite quasirandom graph.

\subsection{Genus of quasirandom graphs}%

Given $\varepsilon>0$, we consider a graph $G\in\Q(n,p,\varepsilon)$. Recall that this means that $\d(G,K(n,p))<\varepsilon$. We consider $0<p\le 1$ to be a constant. Along the way, we will need that %$\varepsilon < 1/12$ and later also that 
$\varepsilon < (p/4)^{12}$, which we will assume later in this section. We have the following result.

\begin{lemma}\label{lem:codegree}
Suppose $\varepsilon>0$ and $0\leq p\leq 1$, and let $G\in\Q(n,p,\varepsilon)$ be a simple graph. Then we have
\begin{equation}
2\sum_{uv\in E(G)}|\n(u,v)-p^2n|\leq\sqrt{12\varepsilon}\, n^3.
\label{eq:4}
\end{equation}
\end{lemma}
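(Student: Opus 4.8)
The plan is to pass from the $L^1$ statement to an $L^2$ estimate via Cauchy--Schwarz, and then to compute the resulting second moment \emph{exactly} in terms of homomorphism counts, to which the Counting Lemma (Lemma~\ref{lem:counting}) applies directly. No topology enters; this is purely a quasirandomness computation.

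First I would apply Cauchy--Schwarz. Writing $e=\e(G)$,
\[
 \Bigl(\sum_{uv\in E(G)} |\n(u,v)-p^2n|\Bigr)^2 \;\le\; e\,\sum_{uv\in E(G)} \bigl(\n(u,v)-p^2n\bigr)^2 .
\]
Since $G$ is simple we have the trivial bound $e\le\binom n2<\tfrac12 n^2$, so it suffices to prove
\[
 \sum_{uv\in E(G)}\bigl(\n(u,v)-p^2n\bigr)^2 \;\le\; 6\varepsilon\,n^4 ,
\]
which then gives $\sum_{uv\in E(G)}|\n(u,v)-p^2n|\le\sqrt{3\varepsilon}\,n^3$, and multiplying by $2$ yields exactly $\sqrt{12\varepsilon}\,n^3$.

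To bound the second moment, expand the square and identify each term with a homomorphism count. With $K_4^{-}$ denoting $K_4$ with one edge removed, counting homomorphisms by first choosing an ordered adjacent pair and then two (not necessarily distinct) common neighbours of it gives $\hom(K_4^{-},G)=2\sum_{uv\in E(G)}\n(u,v)^2$, $\hom(K_3,G)=2\sum_{uv\in E(G)}\n(u,v)$, and $\hom(K_2,G)=2\,\e(G)$. Hence
\[
 \sum_{uv\in E(G)}\bigl(\n(u,v)-p^2n\bigr)^2
 \;=\; \tfrac12\hom(K_4^{-},G)-p^2n\,\hom(K_3,G)+\tfrac12 p^4n^2\,\hom(K_2,G).
\]
Now apply the Counting Lemma with $G'=K(n,p)$, for which $\hom(F,K(n,p))=p^{|E(F)|}n^{|V(F)|}$, together with $\d(G,K(n,p))<\varepsilon$, obtaining
\[
 |\hom(K_4^{-},G)-p^5n^4|<5\varepsilon n^4,\quad |\hom(K_3,G)-p^3n^3|<3\varepsilon n^3,\quad |\hom(K_2,G)-pn^2|<\varepsilon n^2 .
\]
Substituting, the ``deterministic'' contributions $\tfrac12p^5n^4-p^5n^4+\tfrac12p^5n^4$ cancel, and the remaining error is at most $\bigl(\tfrac52+3p^2+\tfrac12 p^4\bigr)\varepsilon n^4\le 6\varepsilon n^4$ since $0\le p\le1$. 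This is exactly the bound required above, completing the argument.

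The proof is short, and the only delicate point is bookkeeping: one must get the three homomorphism identities right — in particular the factor $2$ coming from ordering the pair $uv$, and the fact that a homomorphism of $K_4^{-}$ may send its two degree-$2$ vertices to the same vertex of $G$ — and then verify that the leading powers of $p$ cancel, so that what survives is $O(\varepsilon n^4)$ with a constant small enough (namely $6$) to produce the stated $\sqrt{12\varepsilon}\,n^3$.
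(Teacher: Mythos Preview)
Your argument is correct and is essentially the same as the paper's: Cauchy--Schwarz followed by expanding the second moment and bounding $\hom(K_2,G)$, $\hom(K_3,G)$, and $\hom(K_4^-,G)$ via the Counting Lemma, with the leading $p^5n^4$ terms cancelling and the residual error summing to at most $12\varepsilon n^6$. The only cosmetic difference is that you bound $e<\tfrac12 n^2$ up front and then target $6\varepsilon n^4$ for the second moment, whereas the paper carries the factor $4|E(G)|\le 2n^2$ through the same chain of inequalities; the arithmetic is identical.
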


\begin{proof}
Since $G\in\Q(n,p,\varepsilon)$, Counting Lemma \ref{lem:counting} yields the following inequalities:
\begin{equation}
  | \hom(K_2,G) - pn^2 | \le \varepsilon n^2 \quad \hbox{and} \quad
  | \hom(K_3,G) - p^3n^3 | \le 3\varepsilon n^3.
\label{eq:5}
\end{equation}
Thus, we have
\begin{equation}
   \e(G) = \tfrac{1}{2} \hom(K_2,G) \leq \tfrac{1}{2}(p+\varepsilon)n^2
\label{eq:6}
\end{equation}
and
\begin{equation}
   2\sum_{uv\in E(G)}\n(u,v) = \hom(K_3,G) \geq (p^3-3\varepsilon)n^3.
\label{eq:7}
\end{equation}

Let $K_4^-$ be the graph obtained from $K_4$ by deleting an edge and observe that 
\begin{equation}
   2\sum_{uv\in E(G)}\n^2(u,v) = \hom(K_4^-,G) \leq (p^5 + 5\varepsilon)n^4.
\label{eq:8}
\end{equation}
By using the Cauchy-Schwarz inequality and applying (\ref{eq:6})--(\ref{eq:8}), we now derive:
\begin{align*}
\Bigl(2\sum_{uv\in E(G)}|\n(u,v) - p^2n|\Bigr)^2 
%=\, &\bigg(\sum_{u,v\in V(G)}|\n(u,v)-p^2n|a_{uv}\bigg)^2\\
 \leq ~ & 4\e(G) \sum_{uv \in E(G)} \big|\n(u,v)-p^2n\big|^2\\
 \leq ~ & 2n^2\Bigl(\sum_{uv \in E(G)}\n^2(u,v) - 2p^2n\sum_{uv \in E(G)}\n(u,v) + p^4n^2\e(G)\Bigr)\\[1mm]
% \sim\, & n^2\big(\hom(K_4^-,G)-4p^2n\sum_{uv\in E(G)}\n(u,v)+2p^4n^2\e(G)\big)\\
 \leq ~ & n^6(p^5+5\varepsilon -2p^5 + 6\varepsilon p^2 + p^5 + \varepsilon p^4) \\[2mm]
 = ~ & 12\varepsilon\, n^6,
\end{align*}
and this proves the lemma.
\end{proof}

Given an $\varepsilon$-quasirandom graph $G\in\Q(n,p,\varepsilon)$, we choose a real number $t=t(n)$ such that $n^{-1/2}\ll p/t\ll n^{-(1-\varepsilon)/(2-\varepsilon)}$ (we will use this upper bound later to limit the number of short blossoms in the graph). Let $D\in\mathcal{D}(G)$ be the corresponding digraph of $G$, and we partition its edges into $t$ parts, putting each edge in one of the parts uniformly at random. We call the resulting digraphs $D_1,\dots,D_t$. Now for each $D_i$, let $\H_i$ be the $3$-uniform hypergraph where $V(\H_i)$ is the edge set of $D_i$, and $E(\H_i)$ is the set of directed triangles in $D_i$. For convenience, we write $p_1 = p/t$.

\begin{lemma}\label{lem:quasi1}
Let\/ $p,\varepsilon,t$ and $p_1$ be as above and let $\Delta=np_1^2/4$. Then there exists a real number $\delta = \Theta(\varepsilon^{1/4})$ such that for each $i\in [t]$, we have
\[
|\{x\in V(\H_i)\mid(1-\delta)\Delta\leq\deg(x)\leq(1+\delta)\Delta\}|\geq(1-\delta)|V(\H_i)| \quad \hbox{a.a.s.}
\]
\end{lemma}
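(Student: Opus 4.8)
The plan is to identify the hyperdegree of an arc of $\H_i$ with a binomial random variable whose mean equals $\Delta=np_1^2/4$ up to a relative error governed by the quasirandomness of $G$ via Lemma~\ref{lem:codegree}, and then to prove concentration. Fix $i\in[t]$ and let $E_i\subseteq E(G)$ be the (random) set of edges placed into part $i$; then $V(\H_i)$ consists of the arcs of $\mathcal{D}(G)$ whose underlying edge lies in $E_i$, and $|V(\H_i)|\sim\mathrm{Bin}(\e(G),1/t)$. For an ordered pair $(u,v)$ with $uv\in E(G)$, a directed triangle of $D_i$ through $\overrightarrow{uv}$ must be $\overrightarrow{uv},\overrightarrow{vw},\overrightarrow{wu}$ with $w\in N(u)\cap N(v)$, and it lies in $\H_i$ exactly when the two distinct edges $vw$ and $uw$ are oriented as indicated and both placed into part $i$. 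Let $Z_{uv}$ count the common neighbours $w$ for which this happens. For distinct $w\ne w'$ the four edges $vw,uw,vw',uw'$ are pairwise distinct and different from $uv$, so the $\n(u,v)$ indicator events are mutually independent, each of probability $\frac14\cdot\frac1{t^2}$; hence $Z_{uv}\sim\mathrm{Bin}\bigl(\n(u,v),\tfrac1{4t^2}\bigr)$ with mean $\mu_{uv}=\n(u,v)/(4t^2)$, and $Z_{uv}$ is independent of the event $\{\overrightarrow{uv}\in D_i\}$, which depends only on the edge $uv$. When $\overrightarrow{uv}\in D_i$ we have $\deg_{\H_i}(\overrightarrow{uv})=Z_{uv}$.

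Next I would separate off the irregular edges. Call $uv\in E(G)$ \emph{good} if $|\n(u,v)-p^2n|\le\delta_1 p^2n$, where $\delta_1=\Theta(\varepsilon^{1/4})$ is fixed below. Lemma~\ref{lem:codegree} gives $\sum_{uv\in E(G)}|\n(u,v)-p^2n|\le\tfrac12\sqrt{12\varepsilon}\,n^3$, so by Markov's inequality the number $B$ of non-good (\emph{bad}) edges satisfies $B\le\sqrt{3\varepsilon}\,n^2/(\delta_1 p^2)$. Since $\e(G)\ge\tfrac12(p-\varepsilon)n^2$ by~(\ref{eq:5}), the bad edges form at most a fraction $\delta_2=\Theta\bigl(\sqrt\varepsilon/(\delta_1 p^3)\bigr)$ of $E(G)$; requiring $\delta_1=\delta_2$ forces $\delta_1=\delta_2=\Theta(\varepsilon^{1/4})$, a quantity less than $1$ because $\varepsilon<(p/4)^{12}$. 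For a good edge, $\mu_{uv}\in[(1-\delta_1)\Delta,(1+\delta_1)\Delta]$ since $\Delta=np_1^2/4$.

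Finally I would bound the number of out-of-range vertices of $\H_i$; the decisive fact is that $\Delta=np_1^2/4\to\infty$ — precisely the hypothesis $p/t\gg n^{-1/2}$ — and that $\e(G)/t\to\infty$ (indeed $t\ll n^{1/2}$, again by the bound on $p_1$). Set $\delta_3=\delta_1$, and split the vertices of $\H_i$ lying outside the window $\bigl[(1-\delta_1)(1-\delta_3)\Delta,\ (1+\delta_1)(1+\delta_3)\Delta\bigr]$ into arcs over bad edges and arcs over good edges. For a good pair $(u,v)$, a Chernoff bound gives $\P\bigl[Z_{uv}\notin(1\pm\delta_3)\mu_{uv}\bigr]\le 2e^{-c\delta_3^2\Delta}=:\delta_4$, with $\delta_4\to0$; as $(u,v)$ contributes an arc of $D_i$ with probability $\tfrac1{2t}$, independently of $Z_{uv}$, the expected number of good, out-of-range vertices of $\H_i$ is at most $\delta_4\,\e(G)/t$, hence by Markov it is $o(\e(G)/t)$ a.a.s. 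For the bad edges, badness is a deterministic property of $G$, so the number of bad edges in $E_i$ is $\mathrm{Bin}(B,1/t)$, with mean $B/t\le\delta_2\,\e(G)/t$ and variance at most $B/t$; since $\delta_2\,\e(G)/t\to\infty$, Bernstein's inequality shows this number (hence also the number of bad arcs in $D_i$) is at most $2\delta_2\,\e(G)/t$ a.a.s. Also $|V(\H_i)|\ge\tfrac12\e(G)/t$ a.a.s. by concentration of $\mathrm{Bin}(\e(G),1/t)$. Intersecting these events, a.a.s. at most $o(\e(G)/t)$ good arcs of $D_i$ are out of range, while every in-range good arc has hyperdegree in $[(1-\delta_3)\mu_{uv},(1+\delta_3)\mu_{uv}]\subseteq\bigl[(1-\delta_1)(1-\delta_3)\Delta,(1+\delta_1)(1+\delta_3)\Delta\bigr]$, and the arcs over bad edges number at most $2\delta_2\,\e(G)/t$; thus at most $3\delta_2\,\e(G)/t\le 6\delta_2\,|V(\H_i)|$ vertices of $\H_i$ lie outside that window. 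Taking $\delta=\max\{6\delta_2,\ 2\delta_1+2\delta_3\}=\Theta(\varepsilon^{1/4})$, so that $(1\pm\delta_1)(1\pm\delta_3)\subseteq(1\pm\delta)$ and $6\delta_2\le\delta$, yields the asserted bound.

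The step I expect to be the main obstacle is the concentration of the \emph{number} of out-of-range arcs inside $D_i$: the events ``$\overrightarrow{uv}\in D_i$ and $Z_{uv}$ is out of range'' are far from independent across edges (each common-neighbour edge affects the status of $\Theta(p^2n)$ arcs), so a direct Azuma/McDiarmid bounded-differences argument is too weak — a single edge's influence is of order $p^2 n$, whereas the fluctuation we can afford is only of order $\e(G)/t$. The way around this, used above, is to peel off the arcs over \emph{bad} edges, for which the only surviving randomness (membership of the edge in part $i$) \emph{is} independent across edges, and to treat the good-but-out-of-range arcs by a first-moment estimate alone, which suffices because each such arc is out of range only with probability $\delta_4=o(1)$.
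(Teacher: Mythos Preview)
Your argument is correct and follows essentially the same route as the paper. Both proofs (i) isolate the edges $uv$ with $\n(u,v)$ close to $p^2n$ (your ``good'' edges, the paper's ``balanced'' edges) and bound the remainder by an averaging argument against Lemma~\ref{lem:codegree}, yielding a $\Theta(\varepsilon^{1/4})$ exceptional fraction; (ii) show per-edge concentration of the directed-triangle count in $D_i$; and (iii) pass to an a.a.s.\ statement by a first-moment bound on the number of out-of-range good arcs together with concentration of the number of bad arcs in $E_i$. The only substantive differences are cosmetic: the paper uses Chebyshev for step~(ii) and writes the bookkeeping in terms of $\lambda$ and $\psi(\varepsilon,\lambda)$, whereas you identify $Z_{uv}\sim\mathrm{Bin}(\n(u,v),1/(4t^2))$ explicitly and invoke Chernoff/Bernstein, which gives a cleaner tail but is not needed for the stated conclusion. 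Your closing paragraph explaining why a bounded-differences argument on the total count fails and why the good/bad split sidesteps this is a helpful remark that the paper leaves implicit.
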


\begin{proof}
We first return to the graph $G$. 
Let $\lambda^2=\frac{\sqrt{13\varepsilon}}{p^2(p-2\varepsilon)}$. For every edge $uv\in E(G)$, we say $uv$ is {\em unbalanced} if there are at least $(1+\lambda)np^2$ paths of length $2$ between $u$ and $v$, or at most $(1-\lambda)np^2$ paths of length $2$ between them. If there are at least $\lambda\e(G)$ unbalanced edges, then
\[
2\sum_{uv\in E(G)}|\n(u,v)-np^2| \geq 2\lambda^2\e(G)np^2
> \lambda^2(n^2p-2\varepsilon n^2)np^2 > \sqrt{12\varepsilon}n^3.
\]
This contradicts (\ref{eq:4}) and proves that the number of unbalanced edges is smaller than $\lambda\e(G)$.

We say an edge of $G$ is {\em balanced} if it is not unbalanced and let $\B$ denote the set of all balanced edges in $G$. As shown above, $|\B| \ge (1-\lambda)\e(G)$. Since we create $D_i$ by selecting edges uniformly at random from $D$, we have
\[
\E(|\B\cap E(D_i)|)=\frac{|\B|}{t} \quad \hbox{and} \quad
\E(|\B\cap E(D_i)|^2)=\frac{|\B|^2-|\B|}{t^2}.
\]
By Chebyshev's inequality,
\begin{equation}\label{cheb}
\P\Big(\big||\B\cap E(D_i)|-\E(|\B\cap E(D_i)|)\big|>\varepsilon\frac{|\B|}{t} \Big)\leq\frac{|\B|t^2}{t^2\varepsilon^2|\B|^2}=\frac{1}{\varepsilon^2|\B|}=o(1).
\end{equation}
This means a.a.s. that $D_i$ contains at least $(1-\varepsilon)|\B|/t$ edges which are balanced in $G$. In the digraph $D_i$, for every $e\in E(D_i)$, let $\mathcal{T}_i(e)$ be the set of directed triangles which contain $e$. Let $\mathcal{T}(e)$ be the set of (undirected) triangles in the graph $G$ which contain $e$. Let us now consider a balanced edge $e$ of $G$ that belongs to $D_i$. Then
\[
\E(|\mathcal{T}_i(e)|)=\frac{\mathcal{T}_G(e)}{4t^2},\qquad \E(|\mathcal{T}_i(e)|^2)=\frac{\mathcal{T}^2_G(e)-\mathcal{T}_G(e)}{16t^4}.
\]
By Chebyshev's inequality,
\[
\E\big(\big||\T_i(e)|-\E(|\T_i(e)|)\big|>\varepsilon\E(|\T_i(e)|)\big)\leq\frac{\E^2(|\T_i(e)|)-\E(|\T_i(e)^2|)}{\varepsilon^2\E^2(|\T_i(e)|)}=\frac{1}{|\T_G(e)|}=o(1).
\]
This means at least $(1-\varepsilon)^2|\B|/t$ edges in $D_i$ are contained in at least $(1-\varepsilon)(1-\lambda)\Delta$ directed triangles and in at most $(1+\varepsilon)(1+\lambda)\Delta$ triangles.

Since $|\B|/t\geq(1-\lambda)\e(G)/t$, and a.a.s. $|V(\H_i)|<(1+\varepsilon)\e(G)/t$ (also by Chebyshev's inequality), we choose $\delta\geq\delta_0 := \max\{\lambda+\varepsilon+\varepsilon\lambda, \psi(\varepsilon,\lambda)\}$, where $\psi(\varepsilon,\lambda) = \frac{\lambda(1-\varepsilon)^2+\varepsilon(3-\varepsilon)}{1+\varepsilon}$. Therefore,
\[
\begin{split}
&(1+\varepsilon)(1+\lambda)\Delta = (1+\lambda+\varepsilon+\varepsilon\lambda)\Delta<(1+\delta)\Delta,\\
&(1-\varepsilon)(1-\lambda)\Delta = (1-\lambda-\varepsilon+\varepsilon\lambda)\Delta>(1-\delta)\Delta,\\
&(1-\varepsilon)^2\frac{|\B|}{t}\geq(1-\varepsilon)^2(1-\lambda)\frac{|V(\H_i)|}{1+\varepsilon} = (1-\psi(\varepsilon,\lambda))|V(\H_i)|\geq(1-\delta)|V(\H_i)|,
\end{split}
\]
which completes the proof.
\end{proof}

Now we fix the value $\Delta$ in Lemma \ref{lem:quasi1}. The lemma shows that a.a.s. $\H_i$ satisfies condition (1) of Theorem \ref{thm:hypergraphmatching}. Condition (2) of Theorem \ref{thm:hypergraphmatching} holds trivially since for every $a,b\in V(\H_i)$, at most one triangle of $G$ contains both $a$ and $b$ as its edges. Condition (3) holds by the following lemma.

\begin{lemma}\label{lem:c3}
Let $p,\varepsilon,t,\Delta$ and $\delta$ be as in Lemma \ref{lem:quasi1} and let $i\in [t]$. Suppose, moreover, that $\varepsilon \le (p/4)^{12}$. Let $P^\delta$ be the set of pairs of vertices $(u,v)\in V^2(D_i)$ such that the number of directed paths from $v$ to $u$ of length $2$ in $D_i$ is at least $(1+\delta)\Delta$ and let
$F_i$ be the number of directed triangles in $D_i$ that contain at least one directed edge $\overrightarrow{uv}\in P^\delta$. Then a.a.s. $F_i<\delta\Delta|V(\H_i)|$.
\end{lemma}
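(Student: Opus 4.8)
The plan is to bound the number of ``bad'' directed triangles $F_i$ by controlling two sources of badness: first, the number of pairs $(u,v)$ in $V^2(D_i)$ that lie in $P^\delta$ (the pairs joined by many directed $2$-paths in $D_i$), and second, the number of directed triangles through any single such pair. The second part is immediate from the quasirandom structure: a directed edge $\overrightarrow{uv}$ together with a common neighbour $w$ forms a directed triangle, so the number of directed triangles of $D_i$ containing a fixed arc $\overrightarrow{uv}$ is at most $\n(u,v)$ in the underlying graph $G$, which (via Lemma~\ref{lem:codegree}, or directly via the Counting Lemma for $K_3$) is $p^2n(1+o(1))$ for all but a negligible fraction of edges, i.e.\ $O(\Delta t^2/n)\cdot n = O(np^2)$. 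Hence it suffices to show that $|P^\delta| = o(\delta t^{-1} |E(G)|/(np^2))\cdot\text{something}$; more precisely, that the total number of (arc, apex) pairs contributing to $F_i$ through $P^\delta$-arcs is $o(\delta\Delta|V(\H_i)|)$.

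First I would pass back to the graph $G$ and use the computation already done in the proof of Lemma~\ref{lem:quasi1}: at most $\lambda|E(G)|$ edges of $G$ are ``unbalanced'' in the sense of having codegree deviating from $np^2$ by more than $\lambda np^2$, where $\lambda = \Theta(\varepsilon^{1/4})$. An arc $\overrightarrow{uv}$ landing in $P^\delta$ requires the number of directed $2$-paths from $v$ to $u$ inside $D_i$ to exceed $(1+\delta)\Delta = (1+\delta)np_1^2/4$. Now in expectation (over the random orientation of $G$ and the random partition of arcs into $D_1,\dots,D_t$), the number of directed $2$-paths from $v$ to $u$ in $D_i$ through a fixed common neighbour $w$ of $u,v$ in $G$ is $\tfrac14 p_1^2 \cdot (1/p^2)$... — more cleanly: each common neighbour $w$ gives a directed $2$-path $v\to w\to u$ in $D_i$ with probability $\tfrac14 t^{-2}$ (two orientations must be correct, two edges must land in $D_i$), so the expected count is $\n(u,v)/(4t^2)$, which for a balanced edge is $(1+o(1))\Delta$. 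So an arc of a balanced edge landing in $P^\delta$ is a large-deviation event. I would apply Chebyshev (or, to get the a.a.s.\ bound with room to spare, the second-moment estimate exactly as in Lemma~\ref{lem:quasi1}, since the common-neighbour indicators are pairwise independent given the orientation) to conclude that a.a.s.\ only an $O(\delta)$-fraction — in fact $o(\delta)$ after adjusting constants in $\delta$ versus $\lambda,\varepsilon$ — of the balanced arcs of $D_i$ land in $P^\delta$. Combined with the $\le \lambda|E(G)|/t \cdot (1+o(1))$ unbalanced arcs in $D_i$, we get $|P^\delta \cap E(D_i)| = O(\delta)\cdot |V(\H_i)|$ a.a.s.

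Multiplying: $F_i \le \sum_{\overrightarrow{uv}\in P^\delta\cap E(D_i)} \n(u,v) \le |P^\delta\cap E(D_i)|\cdot \max_{uv}\n(u,v)$. The codegree maximum needs a crude upper bound; by \eqref{eq:4} in Lemma~\ref{lem:codegree}, only $O(\sqrt{\varepsilon}n^3/(n^2 p^2))= O(\sqrt{\varepsilon}\,n/p^2)$ edges can have $\n(u,v) \ge 2p^2 n$, say, and their total contribution to any count of triangles is absorbed into the error (this is the reason the hypothesis $\varepsilon\le (p/4)^{12}$ is invoked — it makes $\sqrt\varepsilon$ tiny relative to $p$-powers). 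For the bulk, $\n(u,v) \le (1+\lambda)np^2$, so $F_i \le O(\delta)|V(\H_i)| \cdot (1+\lambda)np^2 = O(\delta)\cdot |V(\H_i)| \cdot 4t^2\Delta$. That overshoots by a factor $t^2$, which tells me the correct reading: one must count with the $D_i$-local degree $\Delta$, not the $G$-degree — i.e.\ the number of directed triangles of $D_i$ through an arc is concentrated around $\Delta$ (again via the $\T_i(e)$ second-moment already computed in Lemma~\ref{lem:quasi1}), not $np^2$. With that correction, $F_i \le |P^\delta\cap E(D_i)|\cdot (1+\delta)\Delta = O(\delta)|V(\H_i)|\cdot\Delta$, and after choosing $\delta$ a suitable constant multiple of $\varepsilon^{1/4}$ (larger than in Lemma~\ref{lem:quasi1} by the implied constant) we get $F_i < \delta\Delta|V(\H_i)|$ a.a.s.

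The main obstacle is getting the deviation estimate for $|P^\delta\cap E(D_i)|$ with a constant that is genuinely $o(\delta)$ rather than $O(\delta)$, so that after multiplying by the near-$\Delta$ triangle count per arc the product stays strictly below $\delta\Delta|V(\H_i)|$. This forces a careful bookkeeping of how the parameters $\lambda$, $\varepsilon$, and $\delta$ relate — essentially re-running the Chebyshev computations of Lemma~\ref{lem:quasi1} with the threshold $(1+\delta)\Delta$ in place of $(1\pm\delta)\Delta$ and checking that the fraction of arcs exceeding the threshold is controlled by $\varepsilon/\delta^2$, which is $o(\delta)$ as soon as $\delta \gg \varepsilon^{1/3}$; since $\delta = \Theta(\varepsilon^{1/4})$, this holds. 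The other delicate point is that $P^\delta$ is defined on ordered pairs in $V^2(D_i)$ including non-edges of $G$, but any pair with a directed $2$-path in $D_i$ joining them and also contributing to a directed triangle of $D_i$ must be an arc of $D_i$, so restricting attention to $E(D_i)$ loses nothing.
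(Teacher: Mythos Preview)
Your factorisation $F_i \le |P^\delta\cap E(D_i)|\cdot (1+\delta)\Delta$ is backwards. By definition, an arc $\overrightarrow{uv}\in E(D_i)$ lies in $P^\delta$ precisely when the number of directed $2$-paths $v\to w\to u$ in $D_i$ is at least $(1+\delta)\Delta$; but that number \emph{is} $\deg_{\H_i}(\overrightarrow{uv})$, the number of directed triangles of $D_i$ through $\overrightarrow{uv}$. So every $P^\delta$-arc has $\H_i$-degree $\ge(1+\delta)\Delta$, and $(1+\delta)\Delta$ cannot serve as an upper bound per arc. In particular, arcs coming from edges of very large codegree in $G$ can have $\H_i$-degree of order $\Delta/p^2$, and your ``bulk'' estimate does not cover them. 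Your earlier attempt with $\n(u,v)$ was actually the right currency --- the overshoot by $t^2$ you noticed is not a sign that you should switch to $\Delta$, but that you need to sum codegrees rather than multiply a count by a typical value.

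The paper's argument does exactly that. It first reduces (via concentration) to the graph $G$: since $\delta>2\lambda$, a.a.s.\ no balanced edge has its arc land in $P^\delta$, so $F_i$ is bounded (after scaling by $1/(4t^3)$ and a $(1+\varepsilon)$ factor) by the number $F$ of triangles in $G$ containing an unbalanced edge. Then it bounds $F$ in one stroke using Lemma~\ref{lem:codegree}:
\[
F \le \sum_{uv\notin\B}\n(u,v) \le \sum_{uv\in E(G)}\bigl|\n(u,v)-np^2\bigr| + np^2\,|E(G)\setminus\B| \le \tfrac12\sqrt{12\varepsilon}\,n^3 + \lambda\,np^2|E(G)|.
\]
The hypothesis $\varepsilon\le(p/4)^{12}$ is invoked precisely to make $\tfrac12\sqrt{12\varepsilon}\,n^3 < (\tfrac12-\varepsilon)\delta np^2|E(G)|$, so that together with $\lambda<\tfrac12\delta$ one gets $F<(1-\varepsilon)\delta np^2|E(G)|$, which unwinds to $F_i<\delta\Delta|V(\H_i)|$. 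The point is that the \emph{total} codegree over unbalanced edges is controlled by Lemma~\ref{lem:codegree} directly --- no separate bound on the number of bad arcs and no per-arc degree cap is needed.
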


\begin{proof}
Let $\lambda$ be as in Lemma~\ref{lem:quasi1}. We may assume that $\delta > 2\lambda$. 

By using Chebyshev's inequality, we can pass the counting properties from $G$ to $D_i$ (within the factor $1 + \varepsilon$). Thus it suffices to consider the graph $G$. Let $F$ be the number of triangles in $G$ that contain at least one unbalanced edge in $G$. Since $D_i$ is constructed randomly, if $F<(1-\varepsilon)\delta np^2\e(G)$, then $F_i<\delta\Delta|V(\H_i)|$ a.a.s.

By Lemma \ref{lem:codegree} and Lemma~\ref{lem:quasi1}, we have
\[
2\sum_{uv\in E(G)\setminus \B}\n(u,v)<\sqrt{12\varepsilon}\,n^3 + 2|E(G)\setminus \B|\, p^2n\leq \sqrt{12\varepsilon}\,n^3 + 2\lambda np^2 \e(G).
\]
From the earlier proofs we have that $\delta < 10\varepsilon^{1/4}$. By using the assumption that $\varepsilon < (p/4)^{12}$, we have
\[
\sqrt{12\varepsilon} < (\tfrac{1}{2}-\varepsilon)\delta p^2(1-\varepsilon)\frac{p}{2}.
\]
This implies that the following inequalities hold (a.a.s.):
\begin{align*}
F&<\sum_{uv\in E(G)\setminus\B}\n(u,v)<\lambda np^2\e(G)+\sqrt{12\varepsilon}\, n^3/2\\
&< \tfrac{1}{2}\delta np^2\e(G) + (\tfrac{1}{2}-\varepsilon)\delta np^2\e(G)
= (1-\varepsilon)\delta np^2\e(G),
\end{align*}
which completes the proof.
\end{proof}

Now we are going to apply Theorem \ref{thm:hypergraphmatching} to construct a large set of blossom-free triangles. Using them together with Lemma \ref{lem:rotation}, we will be able to obtain a near triangular embedding. Specifically, we say that an embedding of a graph is \emph{$\varepsilon$-near triangular} (\emph{$\varepsilon$-near quadrangular}) if all but at most $2\varepsilon \e(G)$ edges of $G$ belong to two triangular (quadrangular) faces.

\begin{theorem}\label{thm:quasiembed}
Let $0<p\le 1$ be a constant and $0<\varepsilon < (p/4)^{12}$. Then there is a constant $n_0$ such that for every natural number $n\ge n_0$, every graph $G\in\Q(n,p,\varepsilon)$ admits a $9\varepsilon$-near triangular embedding.
\end{theorem}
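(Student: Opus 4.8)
\textbf{Proof plan for Theorem \ref{thm:quasiembed}.}

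The plan is to build the near-triangular embedding from two disjoint near-perfect matchings in suitable $3$-uniform hypergraphs, exactly along the lines indicated by Lemma \ref{lem:rotation}. First I would take the random partition $D = D_1 \cup \dots \cup D_t$ of (an arbitrary orientation of) $G$ and the associated hypergraphs $\H_i$ described just before Lemma \ref{lem:quasi1}, with $\Delta = np_1^2/4$. Lemmas \ref{lem:quasi1} and \ref{lem:c3}, together with the trivial degeneracy observation that two vertices of $\H_i$ (i.e.\ two directed edges of $D_i$) lie in at most one common hyperedge, show that a.a.s.\ each $\H_i$ satisfies hypotheses (1)--(3) of Theorem \ref{thm:hypergraphmatching} with the error parameter $\delta = \Theta(\varepsilon^{1/4})$. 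Fixing $\delta$ small enough (this is where the quantitative assumption $\varepsilon < (p/4)^{12}$ is consumed, so that $\delta$ and the resulting matching defects are all comfortably below $\varepsilon$), Theorem \ref{thm:hypergraphmatching} gives, for each $i$, a matching $M_i \subseteq E(\H_i)$ and a disjoint $\varepsilon$-near perfect matching $M_i'$ in $\H_i$. Interpreting $M_i$ as a set $\mathcal{C}_1^{(i)}$ of arc-disjoint directed triangles in $D_i$ and $M_i'$ as a set $\mathcal{C}_2^{(i)}$ of arc-disjoint directed triangles, and then reversing the orientations in the $M_i'$ families so that they become directed triangles in $D_i^{-1}$, I obtain two arc-disjoint families $\mathcal{C}_1 = \bigcup_i \mathcal{C}_1^{(i)} \subseteq D$ and $\mathcal{C}_2 = \bigcup_i \mathcal{C}_2^{(i)} \subseteq D^{-1}$ whose union covers all but $O(\varepsilon)|E(G)|$ of the arcs of $D \cup D^{-1}$ as facial triangles.

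The remaining issue is that Lemma \ref{lem:rotation} requires $\mathcal{C}_1 \cup \mathcal{C}_2$ to be \emph{blossom-free}, which a raw pair of hypergraph matchings need not satisfy: a short cycle of $G$ may get picked up as several directed triangles sharing a common center, forming a $3$-blossom. The key step is therefore to delete a small number of offending triangles so as to destroy all blossoms. For this I would bound the number of $3$-blossoms centered at a fixed vertex $v$ by counting, via the quasirandomness/counting lemma, the number of short closed walks through $v$ — a $3$-blossom of length $\ell$ at $v$ uses $\ell$ triangles through $v$ with a cyclic pattern on the neighbors, and the total count of such configurations over all $v$ is $o(t\cdot |E(G)|)$ thanks to the choice $p/t \ll n^{-(1-\varepsilon)/(2-\varepsilon)}$ (this is precisely the upper bound on $t$ that was reserved ``to limit the number of short blossoms''). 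Removing one triangle from each blossom (equivalently, from each $\H_i$-matching, deleting the $o(|V(\H_i)|)$ hyperedges involved in any blossom of the combined family) costs only $o(|E(G)|)$ additional uncovered edges, and leaves $\mathcal{C}_1 \cup \mathcal{C}_2$ blossom-free while still covering all but $9\varepsilon|E(G)|$ edges in two triangular faces once all error terms ($\delta$-defects from Theorem \ref{thm:hypergraphmatching}, Chebyshev slack from passing $G \leadsto D_i$, $\lambda$-slack from unbalanced edges, and the blossom-cleanup) are added up. Applying Lemma \ref{lem:rotation} to the cleaned $\mathcal{C}_1, \mathcal{C}_2$ produces a rotation system $\Pi$ in which every triangle of $\mathcal{C}_1 \cup \mathcal{C}_2$ is a face, and each such triangle contributes its three edges with the correct local rotation, so all but $\le 2\cdot 9\varepsilon|E(G)|$... — more precisely, all but at most $9\varepsilon|E(G)|$ edges lie in two triangular faces, which is exactly the asserted $9\varepsilon$-near triangular embedding.

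The main obstacle I anticipate is the blossom-elimination bookkeeping: one has to verify that the number of short blossoms (not just directed triangles, but $k$-blossoms for all relevant small $k$, and also the ``non-simple'' $2$-blossoms $C, C^{-1}$) produced inside the two matchings is genuinely $o(|E(G)|)$, and that after removal the two families remain arc-disjoint and still individually valid for Lemma \ref{lem:rotation}. The quantitative coupling of the three parameters $\varepsilon$, $\delta = \Theta(\varepsilon^{1/4})$, and the range of $t$ must be tuned so that every loss term is at most (a small constant times) $\varepsilon |E(G)|$ and the final tally does not exceed $9\varepsilon |E(G)|$; I expect this to require carefully choosing $n_0$ large (absorbing the a.a.s.\ statements of Lemmas \ref{lem:quasi1}--\ref{lem:c3}, the $N_0$ from Theorem \ref{thm:hypergraphmatching}, and the convergence in the short-walk counts) so that all the ``a.a.s.''\ and ``$o(1)$'' statements hold simultaneously for $n \ge n_0$. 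Everything else — constructing the matchings, reversing orientations for $D^{-1}$, and invoking Lemma \ref{lem:rotation} — is routine given the machinery already set up.
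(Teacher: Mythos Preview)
Your plan is essentially the paper's own argument: split $D$ into $D_1,\dots,D_t$, verify via Lemmas~\ref{lem:quasi1} and~\ref{lem:c3} that each $\H_i$ satisfies Theorem~\ref{thm:hypergraphmatching}, pull out two disjoint near-perfect matchings (one played in $D_i$, the reversed one in $D_i^{-1}$), destroy blossoms, and feed the result into Lemma~\ref{lem:rotation}. Your handling of the non-simple $2$-blossoms via $M_i\cap M_i'=\varnothing$ is equivalent to the paper's formulation in $\H_i^{-1}$.

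There is one genuine gap, and it is exactly in the step you flag as the ``main obstacle''. The Counting Lemma estimate you propose only controls blossoms of \emph{bounded} length. A $3$-blossom of length $j$ has underlying simple graph the wheel $\mathcal{B}_j$ on $j+1$ vertices, and the bound $\hom(\mathcal{B}_j,G)\le n^{j+1}p^{2j}+2j\varepsilon n^{j+1}$ transferred to $D_i$ gives $N_i(j)\ll n^2p_1$ only for $j\le 1/\varepsilon$: the relevant exponent of $n$ after the random thinning is $(-1+j\varepsilon)/(2-\varepsilon)$, which changes sign precisely at $j=1/\varepsilon$ (this is why the upper bound $p_1\ll n^{-(1-\varepsilon)/(2-\varepsilon)}$ was chosen). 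For longer blossoms the homomorphism count is worthless, and a different argument is required. The paper supplies it: blossoms centred at the same vertex $v$ have pairwise \emph{disjoint tip-sets}, so if $v$ has $d_v$ neighbours in $D_i$ then at most $\varepsilon d_v$ blossoms of length $\ge 1/\varepsilon$ are centred at $v$, and summing over $v$ bounds the number of long blossoms by $2\varepsilon|E(D_i)|$. Without this disjoint-tips observation your claimed $o(|E(G)|)$ cost for blossom removal is not established, and the final tally to reach the constant $9\varepsilon$ does not close. Once you add this one-line argument for long blossoms, the rest of your accounting ($\varepsilon|E(D_i)|$ for short blossoms, $2\varepsilon|E(D_i)|$ for long, the $(1-\varepsilon)$-losses from the matching defects and from the bad indices $i$) matches the paper's and yields $3f_3(\Pi)\ge 2(1-9\varepsilon)|E(G)|$.
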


\begin{proof}
Suppose that $G\in\Q(n,p,\varepsilon)$. We will use the notation introduced earlier in this section. We have that for every $i\in[t]$, $\H_i$ satisfies conditions (1)--(3) in Theorem \ref{thm:hypergraphmatching} a.a.s. That means that if $q$ is the probability that $\H_i$ does not satisfy all of the conditions, then $q\to 0$ as $n\to \infty$. Let $J\subseteq [t]$ be the index set such that for every $j\in J$, $\H_j$ satisfies all three conditions listed in Theorem \ref{thm:hypergraphmatching}. Let $I = [t]\setminus J$. By Markov's inequality we obtain $|I|\leq\varepsilon t$ a.a.s. Note that ``a.a.s.'' comes from the way we construct $D_i$, which means that there exist $D_1,\dots,D_t$ such that $|I|\leq \varepsilon t$. 

For every $i\in J$, apply Theorem \ref{thm:hypergraphmatching}. For sufficiently large $n$, there exists a matching $M_i$ of $\H_i$ of size at least $(1-\varepsilon)\frac{|E(D_i)|}{3}$. Let $\H_i^{-1}$ be the $3$-uniform hypergraph defined on the digraph $D^{-1}$, then $M_i^{-1}$ is a large matching in $\H_i^{-1}$. Applying Theorem \ref{thm:hypergraphmatching} again, we obtain another matching $M_i^\prime$ in $\H_i^{-1}$ that is disjoint with $M_i^{-1}$, and has a size of at least $(1-\varepsilon)\frac{|E(D_i)|}{3}$. Therefore, in the graph $D_i\cup D_i^{-1}$, $M_i\cup M_i^\prime$ has size at least $2(1-\varepsilon)\frac{|E(D_i)|}{3}$, and does not have non-simple blossoms of length $2$.

In order to apply Lemma \ref{lem:rotation} to obtain a rotation system and a surface embedding, we have to remove one of the cycles from each of the blossoms that appear in $M_i\cup M_i^\prime$. We first consider short blossoms, that is, blossoms of length at most $1/\varepsilon$. We use $\overrightarrow{\mathcal{B}_j}$ to denote a digraph of a $3$-blossom of length $j$ in $D\cup D^{-1}$, and $\mathcal{B}_j$ to denote its underlying simple graph in $G$, where $j$ is an integer such that $2\leq j\leq 1/\varepsilon$. 
Observe that each such $\mathcal{B}_j$ is a subgraph of $G$ that is isomorphic to the wheel graph of length $j$ (the cycle of length $j$ together with the center vertex adjacent to all vertices on the cycle). By the Counting Lemma \ref{lem:counting}, we have
\[
\hom(\mathcal{B}_j,G)\leq n^{j+1}p^{2j}+2j\varepsilon n^{j+1}.
\]

By the way, we construct $D_i$, for every $i\in [t]$, given a $3$-blossom simple graph $\mathcal{B}_j$ in $G$, $\P(\overrightarrow{\mathcal{B}_j}\in D_i\cup D_i^{-1})=\frac{1}{2^{j-1}t^{2j}}$. Let $N_i(j)$ denote the number of $\overrightarrow{\mathcal{B}_j}$ in $D_i\cup D_i^{-1}$. Then by Chebyshev's inequality
\[
N_i(j)<(1+\varepsilon)\Big(\frac{n^{j+1}p_1^{2j}}{2^{j-1}}+\frac{2j\varepsilon n^{j+1}p_1^{2j}}{p^{2j}2^{j-1}}\Big)\ll n^2p_1, \quad a.a.s.
\]
That means $\sum_{j=1}^{1/\varepsilon}N_i(j)<\varepsilon(1-\varepsilon) n^2p_1/2<\varepsilon |E(D_i)|$, a.a.s. Then we can remove a triangle from each blossom of length at most $1/\varepsilon$, we obtain a subset of $M_i\cup M_i^\prime$ of size at least $2(1-3\varepsilon)\frac{|E(D_i)|}{3}$.

In the next step we will argue that there is only a small number of long blossoms. If $\mathcal{B}$ and $\mathcal{B}^\prime$ are two blossoms with center $v$ in $M_i\cup M_i^\prime$, it is clear that the tips of $\mathcal{B}$ and $\mathcal{B}^\prime$ are disjoint. Therefore if $v$ has $l$ neighbours in $D_i$, at most $\varepsilon l$ different $3$-blossoms of length at least $1/\varepsilon$ have center $v$. Therefore, the total number of long blossoms is at most $2\varepsilon|E(D_i)|$. By removing one of the triangles from each long blossom, we finally obtain a blossom-free subset $\mathfrak{M}_i$ of $M_i\cup M_i^{-1}$, such that
\[
|\mathfrak{M}_i|\geq2(1-3\varepsilon)\frac{|E(D_i)|}{3}-2\varepsilon|E(D_i)|=2(1-6\varepsilon)\frac{|E(D_i)|}{3}.
\]

Note that for every $i\in J$, by Chebyshev's inequality, there exists a set $J^\prime\subseteq J$ such that $|J^\prime|\geq(1-\varepsilon)|J|$ and for every $i\in J^\prime$,
\[
(1-\varepsilon)\frac{\e(G)}{t}\leq |E(D_i)|\leq(1+\varepsilon)\frac{\e(G)}{t}.
\]

Now we take the union of all $\mathfrak{M}_i$ with $i\in J^\prime$, the union set is blossom-free since all pairs of the graphs $D_i$ and $D_{i'}$ with $i\neq i'$ are edge-disjoint. Now, applying Lemma \ref{lem:rotation}, we obtain an embedding $\Pi$ from $\bigcup_{i\in J^\prime}\mathfrak{M}_i$ such that
\[
\begin{split}
 3 f_3(\Pi) &\geq 3\sum_{i\in J^\prime}|\mathfrak{M}_i|
 \geq 2(1-6\varepsilon)(1-\varepsilon)\frac{\e(G)}{t} |J'| \\
 &\geq 2(1-9\varepsilon)\e(G),
\end{split}
\]
which completes the proof.
\end{proof}

\begin{theorem}\label{thm:quasigenus}
Let $0<p\le 1$ be a constant and $0<\varepsilon < (p/4)^{12}$. Then there is a constant $n_0$ such that for every natural number $n\ge n_0$, the genus and the non-orientable genus of every graph
$G\in\Q(n,p,\varepsilon/18)$ lie within the following bounds:
\[
\frac{\e(G)}{6}\leq \g(G)\leq(1+\varepsilon)\frac{\e(G)}{6}
\]
and
\[
\frac{\e(G)}{3}\leq \ng(G)\leq(1+\varepsilon)\frac{\e(G)}{3}.
\]
\end{theorem}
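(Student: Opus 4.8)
The plan is to obtain both upper bounds from the near-triangular embedding furnished by Theorem \ref{thm:quasiembed}, and both lower bounds from Euler's formula using only that $G$ is simple. Since $\varepsilon<(p/4)^{12}$ we also have $\tfrac{\varepsilon}{18}<(p/4)^{12}$, so Theorem \ref{thm:quasiembed} applies with $\delta:=\tfrac{\varepsilon}{18}$ in place of $\varepsilon$; its proof in fact produces, once $n$ is at least the constant $n_0$ of that theorem, a rotation system $\Pi$ of $G\in\Q(n,p,\delta)$ with $3f_3(\Pi)\ge 2(1-9\delta)\e(G)=2\bigl(1-\tfrac{\varepsilon}{2}\bigr)\e(G)$. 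Since a rotation system determines a $2$-cell embedding, say into $\mathbb{S}_h$, Euler's formula gives $n-\e(G)+f(\Pi)=2-2h$; combining this with $f(\Pi)\ge f_3(\Pi)\ge\tfrac23\bigl(1-\tfrac{\varepsilon}{2}\bigr)\e(G)$ and $n\ge2$ yields
\[
\g(G)\le h=\tfrac12\bigl(2-n+\e(G)-f(\Pi)\bigr)\le\tfrac12(2-n)+\tfrac16(1+\varepsilon)\e(G)\le\tfrac16(1+\varepsilon)\e(G).
\]
For the non-orientable genus I would flip the signature of a single non-bridge edge of $\Pi$ (quasirandomness forces $G$ to be $2$-connected for $n\ge n_0$, so such an edge exists): this turns $\Pi$ into a non-orientable $2$-cell embedding whose number of faces changes by at most one, hence still has at least $f_3(\Pi)-1$ triangular faces, and the same Euler computation with $\chi(\mathbb{N}_c)=2-c$ gives $\ng(G)\le(1+\varepsilon)\tfrac{\e(G)}{3}$. (Alternatively one can invoke the standard inequality $\ng(G)\le 2\g(G)+1$ and absorb the additive constant via the density bound below.)

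For the lower bounds, note that for $n\ge n_0$ the graph $G$ is connected, so by Youngs' theorem its minimum genus and minimum non-orientable genus are realized by $2$-cell embeddings. In any $2$-cell embedding of the simple graph $G$ every facial walk has length at least $3$, so $3f\le 2\e(G)$, i.e. $f\le\tfrac23\e(G)$. Plugging this into $n-\e(G)+f=2-2h$ gives $\g(G)\ge\tfrac16\e(G)-\tfrac12(n-2)$, and plugging it into $n-\e(G)+f=2-c$ gives $\ng(G)\ge\tfrac13\e(G)-(n-2)$. Since $G\in\Q(n,p,\delta)$, the Counting Lemma \ref{lem:counting} applied to $F=K_2$ gives $\e(G)\ge\tfrac12(p-\delta)n^2$, which is of order $n^2$; hence the linear terms $\tfrac12(n-2)$ and $n-2$ are of lower order and, for $n\ge n_0$, the two bounds are at least $(1-\varepsilon)\tfrac16\e(G)$ and $(1-\varepsilon)\tfrac13\e(G)$ respectively, which gives the asserted lower bounds (the linear additive slack being harmless for dense $G$).

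There is no genuinely hard step here: all of the topological and probabilistic content is already contained in Theorem \ref{thm:quasiembed}, and what remains is essentially bookkeeping with Euler's formula. The only points that need a little care are (i) checking that passing from the orientable near-triangular embedding to a non-orientable one costs at most one unit of Euler genus, which the single signature flip makes transparent, and (ii) reconciling the $O(n)$ additive slack in the Euler estimates with the leading term $\tfrac16\e(G)$ (respectively $\tfrac13\e(G)$), which is precisely where the hypotheses that $p$ is a positive constant — so that $\e(G)=\Theta(n^2)$ — and $n\ge n_0$ are used.
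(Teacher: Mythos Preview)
Your approach is essentially the same as the paper's: apply Theorem~\ref{thm:quasiembed} with $\varepsilon/18$ to obtain an $\tfrac{\varepsilon}{2}$-near triangular embedding, read off the upper bound from Euler's formula, and get the lower bound from the standard Euler inequality $3f\le 2\e(G)$ for simple graphs (the paper cites this as \cite[Proposition~4.4.4]{top} rather than writing it out).

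Two minor remarks. First, for the non-orientable upper bound the paper does not modify the orientable embedding as you do; instead it reruns the construction in Theorem~\ref{thm:quasiembed} using the non-orientable version of Lemma~\ref{lem:rotation} (rotation systems with signatures) to obtain a non-orientable near-triangular embedding directly. Your signature-flip shortcut, or the inequality $\ng(G)\le 2\g(G)+1$, is a perfectly valid alternative and arguably cleaner. Second, as you correctly note, the Euler argument only yields $\g(G)\ge \tfrac{1}{6}\e(G)-\tfrac{1}{2}(n-2)$, hence $(1-\varepsilon)\tfrac{\e(G)}{6}$ for large $n$, and not the sharp $\tfrac{\e(G)}{6}$ literally stated; the paper's citation of \cite[Proposition~4.4.4]{top} gives the same bound with the same $O(n)$ slack. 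So your careful accounting here is not a gap relative to the paper --- it is simply what the argument actually delivers (and matches the $(1-\varepsilon)$ form used later in Theorem~\ref{thm:quasitrigenus}).
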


\begin{proof}
By Theorem \ref{thm:quasiembed}, $G$ has an $\varepsilon/2$-near triangular embedding $\Pi$. Therefore,
\[
\begin{split}
\g(G) &\leq \g(G,\Pi) \le \tfrac{1}{2}(\e(G)-f(\Pi)) \le \tfrac{1}{2}(\e(G)-f_3(\Pi))\\
      &\leq \tfrac{1}{2}(\e(G)-\tfrac{2}{3}(1-\tfrac{\varepsilon}{2})\e(G))\leq(1+\varepsilon)\frac{\e(G)}{6}.
\end{split}
\]

The lower bound for $\g(G)$ follows by \cite[Proposition 4.4.4]{top} directly. For the non-orientable genus, we use essentially the same proof together with the non-orientable version of Lemma \ref{lem:rotation} to obtain a non-orientable $9\varepsilon$-near-triangular embedding in the proof of Theorem \ref{thm:quasiembed}.
\end{proof}

\subsection{Bipartite and tripartite quasirandom graphs}%%%%%

Given $\varepsilon>0$, we now consider a graph $G\in\Q(n^{(2)},P,\varepsilon)$, 
where 
$P = 
\begin{bmatrix}
0 & p\\
p & 0
\end{bmatrix}$. 
For this case, we simply write $G\in\Q(n^{(2)},p,\varepsilon)$.
This means $G$ is defined on the vertex set $V_1\sqcup V_2$, each $V_i$ of size $n$, and $\d(G,K(n^{(2)},p))<\varepsilon$. 

Recall that for $u,v\in V(G)$, $\p(u,v)$ denotes the number of $(u,v)$-paths of length 3. We also define $\w(u,v)$ as the number of $(u,v)$-walks of length 3.
These two quantities are closely related:
$$
\w(u,v) = \left\{
  \begin{array}{ll}
     \p(u,v), & \hbox{if $uv\notin E(G)$;} \\
     \p(u,v) + \deg_G(u) + \deg_G(v) -1, & \hbox{if $uv\in E(G)$.}
  \end{array}
\right.
$$
This implies that 
\begin{eqnarray}
  \hom(C_4,G) &=& 2\sum_{uv\in E(G)}\w(u,v) \nonumber \\[2mm]
  &=& 2\sum_{uv\in E(G)}\p(u,v) + 2\sum_{v\in V(G)}\deg^2(v) - 2\e(G) \nonumber \\[2mm]
  &=& 2\sum_{uv\in E(G)}\p(u,v) + 2\hom(P_3,G) - 2\e(G). \label{eq:homC4}
\end{eqnarray}

The following lemma is an analogous result to Lemma \ref{lem:codegree}.

\begin{lemma}\label{lem:bi1}
Let $0 < \varepsilon < 1$, $n\ge 13\varepsilon^{-1}$, $0<p<1$, and let $G\in\Q(n^{(2)},p,\varepsilon)$. Then 
\[
2\sum_{uv\in E(G)}|\p(u,v)-n^2p^3|\leq\sqrt{17\varepsilon}\, n^4.
\]
\end{lemma}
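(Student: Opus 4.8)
The plan is to follow the template of Lemma~\ref{lem:codegree}, with paths (and, where convenient, walks) of length~$3$ playing the role of common neighbours. Throughout write $\e=\e(G)$ and $K=K(n^{(2)},p)$, so $|V(G)|=2n$ and $\d(G,K)<\varepsilon$. First I apply the Counting Lemma~\ref{lem:counting} to a short list of connected bipartite patterns $F$, comparing $G$ with $K$; since $F$ is connected bipartite, $\hom(F,K)=2\,n^{|V(F)|}p^{|E(F)|}$, and the error is an explicit constant (proportional to $|E(F)|$) times $\varepsilon n^{|V(F)|}$. Concretely: from $K_2$, $\e\le(p+O(\varepsilon))n^2$; from the path $P_3$ on three vertices, $\hom(P_3,G)=\sum_{v\in V(G)}\deg^2(v)=2n^3p^2+O(\varepsilon n^3)$; from $C_4$, $\hom(C_4,G)=2n^4p^4+O(\varepsilon n^4)$; and from the theta graph $\Theta$ obtained from a single edge $uv$ by adding two internally disjoint $u,v$-paths of length~$3$ (so $|V(\Theta)|=6$, $|E(\Theta)|=7$), $\hom(\Theta,G)=2\sum_{uv\in E(G)}\w^2(u,v)=2n^6p^7+O(\varepsilon n^6)$ (orient the base edge, then complete it by two independent length-$3$ walks).

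Now combine these. By identity~(\ref{eq:homC4}),
\begin{equation*}
2\sum_{uv\in E(G)}\p(u,v)=\hom(C_4,G)-2\hom(P_3,G)+2\e.
\end{equation*}
Since $n\ge 13\varepsilon^{-1}$ gives $n^3\le\tfrac{\varepsilon}{13}n^4$ and $\e\le n^2\le\tfrac{\varepsilon^2}{169}n^4$, the last two terms (and their errors) are absorbed into $O(\varepsilon n^4)$, so
\begin{equation*}
S_1:=\sum_{uv\in E(G)}\p(u,v)=n^4p^4+O(\varepsilon n^4),\qquad S_1\ge n^4p^4-O(\varepsilon n^4).
\end{equation*}
For the second moment I use the pointwise bound $\p(u,v)\le\w(u,v)$, valid because every path is a walk, which yields
\begin{equation*}
S_2:=\sum_{uv\in E(G)}\p^2(u,v)\ \le\ \sum_{uv\in E(G)}\w^2(u,v)\ =\ \tfrac12\hom(\Theta,G)\ =\ n^6p^7+O(\varepsilon n^6).
\end{equation*}

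Finally, Cauchy--Schwarz over the $\e$ edges of $G$ gives
\begin{equation*}
\Bigl(2\sum_{uv\in E(G)}\bigl|\p(u,v)-n^2p^3\bigr|\Bigr)^2\ \le\ 4\e\sum_{uv\in E(G)}\bigl(\p(u,v)-n^2p^3\bigr)^2\ =\ 4\e\bigl(S_2-2n^2p^3\,S_1+n^4p^6\,\e\bigr).
\end{equation*}
Substituting the estimates for $S_1$, $S_2$, and $\e$, the leading terms cancel exactly, $n^6p^7-2n^2p^3\cdot n^4p^4+n^4p^6\cdot n^2p=0$, so the parenthesised factor is $O(\varepsilon n^6)$; multiplying by $4\e=O(n^2)$ gives $O(\varepsilon n^8)$. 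An explicit accounting of the constants---bounding each power of $p$ by $1$ (since $0<p<1$), using $\varepsilon<1$, and using $n\ge 13\varepsilon^{-1}$ so that every lower-order contribution is dominated by the corresponding $\varepsilon n^6$ term---brings the right-hand side down to $17\varepsilon n^8$; taking square roots gives the claim. (The specific constant $17$, and the hypotheses $0<p<1$ and $n\ge 13\varepsilon^{-1}$, are calibrated to make this last step work.)

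The real obstacle is the second-moment estimate $S_2$: ``path of length $3$'' is not a homomorphism-closed notion, so $S_2$ is not directly a homomorphism density. The fix is to overcount by walks and use the count of the theta graph $\Theta$; this is harmless because $\w(u,v)-\p(u,v)=\deg(u)+\deg(v)-1=O(n)$ for every edge $uv$, which is of strictly lower order than the main term $\p(u,v)=\Theta(n^2)$, so the overcount only perturbs $S_2$ by $O(\varepsilon n^6)$ once $n\ge 13\varepsilon^{-1}$. Everything else is routine bookkeeping, identical in spirit to the proof of Lemma~\ref{lem:codegree}.
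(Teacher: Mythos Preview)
Your proof is correct and follows essentially the same approach as the paper's. The paper uses the graph $Q$ described as ``the $6$-cycle with one long diagonal added,'' which is exactly your theta graph $\Theta$ (the diagonal $v_1v_4$ is the base edge, and the two arcs of the $6$-cycle are the two internally disjoint length-$3$ paths). Both proofs then proceed identically: Counting Lemma for $K_2$, $P_3$, $C_4$, and $Q=\Theta$; the identity~(\ref{eq:homC4}) for the first moment; the overcount $\p^2\le\w^2$ for the second moment; Cauchy--Schwarz; and the final use of $n\ge 13\varepsilon^{-1}$ to absorb the $O(n^7)$ remainder into $\varepsilon n^8$.
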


\begin{proof}
First, by the Counting Lemma \ref{lem:counting}, we have $2\e(G) = \hom(K_2,G) \leq (p+\varepsilon)n^2$, $\hom(P_3,G) \leq (p^2+2\varepsilon)n^3$, and $\hom(C_4,G) \ge (p^4-4\varepsilon)n^4$.
By using (\ref{eq:homC4}) we obtain
\[
   2\sum_{uv\in E(G)}\p(u,v) \geq (p^4-4\varepsilon)n^4 - 2(p^2+2\varepsilon)n^3 + (p-\varepsilon)n^2.
\]
Let $Q$ be the graph obtained from the 6-cycle by adding one ``long diagonal". Again, we have:
\[
   2\sum_{uv\in E(G)}\w^2(u,v) = \hom(Q,G) \le (p^7+7\varepsilon)n^6.
\]
Therefore, by the Cauchy-Schwarz inequality,
\begin{align*}
 \Bigl( 2 \sum_{uv\in E(G)}| & \p(u,v)-n^2p^3| \Bigr)^2 \leq 2n^2\sum_{uv\in E(G)}\big|\p(u,v) - n^2p^3\big|^2\\
&\leq 2n^2\Bigl( \sum_{uv\in E(G)}\w^2(u,v) - 2n^2p^3\sum_{uv\in E(G)}\p(u,v) +n^4p^6\e(G) \Bigr)\\
&\leq n^2( (p^7+7\varepsilon)n^6 - 2n^2p^3((p^4-4\varepsilon)n^4 - 2(p^2+2\varepsilon)n^3 + (p-\varepsilon)n^2) + (p+\varepsilon)n^6p^6 )\\[1mm]
&< 16\varepsilon n^8 + 13 n^7 \le 17\varepsilon n^8.
\end{align*}
This proves the lemma.
\end{proof}

In order to bound the number of short blossoms, the graphs should not be too dense. The resolution is to split the graph into several quasirandom subgraphs by proceeding in a similar way as in the proof of Theorem \ref{thm:quasiembed}. We choose an integer $t=t(n)$ and let $p_1=p/t$, such that $n^{-\frac{2}{3}}\ll p_1\ll n^{-\frac{2-\varepsilon}{3-\varepsilon}}$. Let $D\in \mathcal{D}(G)$ be the corresponding digraph of $G$. We partition its edges into $t$ parts uniformly at random. The resulting edge-disjoint digraphs are denoted by $D_1,\dots,D_t$. For each $D_i$, let $\H_i$ be the $4$-uniform hypergraph such that $V(\H_i)=E(D_i)$ and the edge-set of $\H_i$ is the set of directed 4-cycles in $D_i$. Now we are going to check condition (1) of Theorem \ref{thm:hypergraphmatching}.

\begin{lemma}\label{lem:3.7}
Let $p,\varepsilon,t$ and $p_1$ be as above, and let $\Delta_2=n^2p_1^3/8$. Then there exists a real number $\delta = \Theta(\varepsilon^{1/4})$ such that for each $i\in [t]$, we have
\[
|\{x\in V(\H_i)\mid(1-\delta)\Delta_2\leq\deg(x)\leq(1+\delta)\Delta_2\}|\geq(1-\delta)|V(\H_i)|
\quad \hbox{a.a.s.}
\]
\end{lemma}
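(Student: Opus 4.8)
The plan is to follow the same template as the proof of Lemma~\ref{lem:quasi1}, but using the four-cycle counts (governed by $\p(u,v)$ and $\w(u,v)$) in place of the triangle/codegree counts. The degree of a vertex $x=\overrightarrow{uv}\in V(\H_i)$ is the number of directed $4$-cycles of $D_i$ through the arc $\overrightarrow{uv}$, and in expectation this is the number of directed $(v,u)$-walks of length $3$ in $D_i$ — equivalently, $\w(u,v)$ walks in $G$ get split across the $t$ parts with the right orientation, so $\E(\deg(x)) = \w_G(u,v)/(2^3 t^3) = \w_G(u,v)/(8 t^3)$. Since $p_1 = p/t$, this is $\Delta_2 = n^2 p_1^3/8$ precisely when $\w(u,v)\approx n^2 p^3$, which by \eqref{eq:homC4} is essentially $\p(u,v)\approx n^2 p^3$ up to the lower-order degree terms $\deg(u)+\deg(v)-1 = O(n)$.

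First I would pass from $G$ to the relevant concentration statement: call an edge $uv$ of $G$ \emph{unbalanced} if $|\p(u,v) - n^2 p^3|$ is large — say at least $\lambda n^2$ for an appropriate $\lambda^2 = \Theta(\sqrt{\varepsilon})$ chosen so that $2\lambda^2 |E(G)| n^2 p$ exceeds the bound $\sqrt{17\varepsilon}\,n^4$ of Lemma~\ref{lem:bi1}. Then Lemma~\ref{lem:bi1} forces the number of unbalanced edges to be at most $\lambda|E(G)|$, exactly as in Lemma~\ref{lem:quasi1}. Let $\B$ be the set of balanced edges, so $|\B|\ge(1-\lambda)|E(G)|$. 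Second, I would run two Chebyshev arguments on the random partition $D_1,\dots,D_t$: one to show that a.a.s.\ $D_i$ contains $(1-o(1))|\B|/t$ edges that are balanced in $G$ (using $\E(|\B\cap E(D_i)|)=|\B|/t$ and the second-moment bound as in \eqref{cheb}), and one to show that for a fixed balanced $e\in E(D_i)$ the count $|\{\text{directed } 4\text{-cycles of } D_i \text{ through } e\}|$ concentrates around its mean within a factor $1\pm\varepsilon$ — here the mean is $\w_G(e)/(8t^3)$ and the variance is small because a balanced edge lies in $\Theta(n^2 p^3)$ undirected $4$-walks, which is $\omega(1)$. Multiplying the two factors $(1\pm\varepsilon)(1\pm\lambda)$ and bounding $|V(\H_i)|$ above by $(1+\varepsilon)|E(G)|/t$ (again Chebyshev), one gets that a $(1-\delta)$-fraction of $V(\H_i)$ has degree in $[(1-\delta)\Delta_2,(1+\delta)\Delta_2]$ for $\delta = \max\{\lambda+\varepsilon+\varepsilon\lambda,\ \psi(\varepsilon,\lambda)\}$ with the same $\psi$ as before, which is $\Theta(\varepsilon^{1/4})$ since $\lambda = \Theta(\varepsilon^{1/4})$.

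The one genuinely new bookkeeping point — and the step I expect to be the main obstacle — is handling the discrepancy between $\w(u,v)$ and $\p(u,v)$: the degree of $x=\overrightarrow{uv}$ in $\H_i$ counts directed $4$-cycles, whose undirected shadows are closed $4$-walks through $uv$, which include degenerate walks ($u\to a\to u\to b\to u$ type configurations and the walk that backtracks along $uv$) and not just genuine $4$-cycles. Concretely $\w(u,v)=\p(u,v)+\deg(u)+\deg(v)-1$ for $uv\in E(G)$, so I must check that the extra $\deg(u)+\deg(v)-1 = O(n)$ term is negligible compared with $\Delta_2$ scaled by $t^3$, i.e.\ compared with $n^2 p^3$; since $p$ is a fixed constant this is clear, $O(n) = o(n^2 p^3)$, but it needs to be said, and the quasirandomness bound on $\hom(P_3,G)$ must be invoked to control $\deg(u),\deg(v)$ for almost all vertices. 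I would also need to exclude the orientations giving non-simple directed $4$-cycles (those using an arc and its reverse), but since $D_i$ and $D$ are oriented graphs this is automatic within a single $D_i$, and across the random orientation it only costs a further $(1+o(1))$ factor. Once these lower-order terms are absorbed, the argument is word-for-word the four-uniform analogue of Lemma~\ref{lem:quasi1}, and the conclusion that a.a.s.\ $\H_i$ satisfies condition~(1) of Theorem~\ref{thm:hypergraphmatching} follows.
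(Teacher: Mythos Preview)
Your proposal is correct and follows essentially the same approach as the paper: define balanced edges via Lemma~\ref{lem:bi1}, show at most a $\lambda$-fraction are unbalanced, transfer this to $D_i$ by Chebyshev, and set $\delta = \max\{\lambda+\varepsilon+\varepsilon\lambda,\psi(\varepsilon,\lambda)\}$. The paper takes $\lambda^2 = \sqrt{18\varepsilon}/(p^3(p-\varepsilon))$ and uses the relative threshold $|\p(u,v)-n^2p^3| > \lambda n^2 p^3$, which is what you need for the $(1\pm\lambda)$ factors to line up; your ``at least $\lambda n^2$'' and the displayed inequality with $n^2p$ rather than $n^2p^3$ are minor slips in bookkeeping.

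One small point: your detour through $\w(u,v)$ is unnecessary and slightly misstated. The hyperedges of $\H_i$ are \emph{simple} directed $4$-cycles, so the degree of $\overrightarrow{uv}$ counts directed $(v,u)$-paths of length $3$ in $D_i$, and each such path comes from an undirected $(v,u)$-path of length $3$ in $G$; hence $\E(\deg(x)) = \p_G(u,v)/(8t^3)$, not $\w_G(u,v)/(8t^3)$. (In an oriented graph one cannot backtrack, so directed walks and directed paths of length $3$ coincide in $D_i$; but the non-path undirected walks in $G$, which account for the $\deg(u)+\deg(v)-1$ term, never orient to give a directed path and so do not contribute at all.) The paper simply works with $\p(u,v)$ throughout and never needs to discuss the $\w$--$\p$ discrepancy. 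Your argument that the difference is $O(n)=o(n^2p^3)$ is correct, so no harm is done, but you can delete that whole paragraph.
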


\begin{proof}
Recall that $p=tp_1$. Suppose $\lambda^2=\frac{\sqrt{18\varepsilon}}{p^3(p-\varepsilon)}$ and $uv$ is an edge in $G$. We say $uv$ is {\em balanced} if $\p(u,v)$ is at least $(1-\lambda)n^2p^3$ and at most $(1+\lambda)n^2p^3$; otherwise it is {\em unbalanced}. Assume that at least $\lambda\e(G)$ edges are unbalanced. Then we have
\[
2\sum_{uv\in E(G)}|\p(u,v)-n^2p^3|\geq \lambda^2n^2p^3(n^2p-\varepsilon n^2)\geq \sqrt{18\varepsilon}\, n^4,
\]
which contradicts Lemma \ref{lem:bi1}. Thus, at least $(1-\lambda)\e(G)$ edges in $G$ are balanced.

Chebyshev's inequality implies that
\[
(1-\varepsilon)\frac{\e(G)}{t}\leq |E(D_i)|\leq (1+\varepsilon)\frac{\e(G)}{t}
\]
holds a.a.s.\ for every $i\in [t]$.

Let $U$ be the number of balanced edges in $G$. Then a.a.s. $D_i$ contains at least $(1-\varepsilon)U/t$ edges which are balanced in $G$, and most of them are still in $D_i$. To be more precise, at least $(1-\varepsilon)^2U/t$ edges are contained in at least $(1-\varepsilon)(1-\lambda)n^2p_1^3/8$ directed cycles of length $4$, and are contained in at most $(1+\varepsilon)(1+\lambda)n^2p_1^3/8$ directed cycles of length $4$. Now we let $\delta \geq \max\{\lambda+\varepsilon+\varepsilon\lambda,\psi(\varepsilon,\lambda)\}$, where $\psi(\varepsilon,\lambda)$ is given in Lemma \ref{lem:quasi1}. We have a.a.s.
\[
\begin{split}
(1-\varepsilon)^2\frac{U}{t}&\geq(1-\varepsilon)^2(1-\lambda)\frac{\e(G)}{t}\\
&\geq\frac{(1-\varepsilon)^2(1-\lambda)}{1+\varepsilon}|V(\H_i)|\geq (1-\delta)|V(\H_i)|.
\end{split}
\]
We conclude the proof in the same way as for Lemma \ref{lem:quasi1} by observing that 
$(1+\delta)\Delta_2 > (1+\varepsilon)(1+\lambda)\Delta_2$ and $(1-\delta)\Delta_2 < (1-\varepsilon)(1-\lambda)\Delta_2$. This completes the proof.
\end{proof}

It is easy to see that Condition (2) in Theorem \ref{thm:hypergraphmatching} holds a.a.s.\ if $\varepsilon < 1/5$. 
Indeed, if two edges $e$ and $f$ are contained in $7$ different cycles of length four, then $e=vx$ and $f=yv$ have a vertex $v$ in common and there are at least $7$ internally disjoint $(x,y)$-paths of length $2$ joining $x$ and $y$. Let $H$ be the graph that is isomorphic to $K_{2,7}$. Then each such pair $e,f$ determines a copy of $H$ in $D_i$. Let $\kappa$ be the number of copies of $H$ in $D_i$. Since $ p_1\ll n^{-\frac{2-\varepsilon}{3-\varepsilon}}$ and $\varepsilon < 1/5$, we have
\[
\mathbb{E}(\kappa)=O(n^{9}p_1^{14})\ll n^{-\tfrac{1-5\varepsilon}{3-\varepsilon}}=o(1).
\]
Hence by Markov's inequality, we have
\[
\mathbb{P}(\kappa>1)<\mathbb{E}(\kappa)=o(1).
\]
Note that since $p\gg n^{-\frac{2}{3}}$, $\delta\Delta_2\gg1$,  and this implies that Condition (2) in Theorem \ref{thm:hypergraphmatching} holds a.a.s.

In order to verify Condition (3), we have the following lemma. The proof is essentially the same as what we did in Lemma \ref{lem:c3}, and we omit the details.

\begin{lemma}
Let $p,\varepsilon, t, \Delta_2$ and $\delta$ be as in Lemma \ref{lem:3.7} and let $i\in [t]$. Suppose also that $\varepsilon < (p/4)^{16}$. Let $F_i$ be the number of directed cycles of length $4$ in $D_i$ that contain at least one directed edge $\overrightarrow{uv}\in P^\delta$, where $P^\delta$ is the set of pairs of vertices $(u,v)\in V_1\times V_2 \cup V_2\times V_1$ such that the number of directed paths from $v$ to $u$ of length $3$ is at least $(1+\delta)\Delta_2$. Then $F_i<\delta\Delta_2|V(\H_i)|$ a.a.s.
\end{lemma}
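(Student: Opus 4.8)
The plan is to follow the proof of Lemma~\ref{lem:c3} almost verbatim, under the dictionary in which triangles become directed $4$-cycles, the common-neighbour count $\n(u,v)$ becomes the length-$3$ path count $\p(u,v)$, the codegree estimate of Lemma~\ref{lem:codegree} becomes that of Lemma~\ref{lem:bi1}, and $\Delta$ becomes $\Delta_2$. Here $\lambda$ denotes the parameter from Lemma~\ref{lem:3.7}, so that $\lambda^2=\sqrt{18\varepsilon}/(p^3(p-\varepsilon))$ and $\lambda=\Theta(\varepsilon^{1/4})$; exactly as in Lemma~\ref{lem:c3} we may assume $\delta>2\lambda$, since the lower bound imposed on $\delta$ in Lemma~\ref{lem:3.7} is itself of order $\varepsilon^{1/4}$.

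First I would transfer the relevant counts from the random digraph $D_i$ to the deterministic graph $G$, just as in Lemmas~\ref{lem:quasi1} and~\ref{lem:c3}. A fixed $(v,u)$-path of length $3$ in $G$ survives in $D_i$ with exactly the orientation needed to close a directed $4$-cycle through a prescribed arc $\overrightarrow{uv}$ with probability $\Theta(t^{-3})$, so if the edge $uv$ is \emph{balanced} in $G$ in the sense of Lemma~\ref{lem:3.7} (i.e.\ $\p(u,v)\le(1+\lambda)n^2p^3$), then the expected number of directed $(v,u)$-paths of length $3$ in $D_i$ is at most $(1+\lambda)n^2p^3/(8t^3)=(1+\lambda)\Delta_2$ (using $p=tp_1$). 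By Chebyshev's inequality this count a.a.s.\ stays below $(1+\varepsilon)(1+\lambda)\Delta_2<(1+\delta)\Delta_2$, and hence a balanced edge landing in $D_i$ a.a.s.\ does not lie in $P^\delta$. Consequently, a.a.s.\ every directed $4$-cycle counted by $F_i$ projects onto a $4$-cycle of $G$ that contains an \emph{unbalanced} edge. Writing $F$ for the number of $4$-cycles of $G$ with at least one unbalanced edge, each such $4$-cycle appears as a directed $4$-cycle of $D_i$ with probability $\Theta(t^{-4})$; together with $|V(\H_i)|=|E(D_i)|\ge(1-\varepsilon)\e(G)/t$ a.a.s.\ and $\Delta_2=n^2p^3/(8t^3)$, this reduces the statement (exactly as in Lemma~\ref{lem:c3}) to proving the deterministic bound
\[
F<(1-\varepsilon)\,\delta\,n^2p^3\,\e(G).
\]

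To bound $F$, I would use that a $4$-cycle of $G$ through an edge $uv$ is precisely $uv$ together with a $(u,v)$-path of length $3$, so the number of $4$-cycles of $G$ containing an unbalanced edge is at most the sum of $\p(u,v)$ over the unbalanced edges $uv$, of which, by the counting in the proof of Lemma~\ref{lem:3.7}, there are fewer than $\lambda\,\e(G)$. Since $\p(u,v)\le n^2p^3+|\p(u,v)-n^2p^3|$ and, by Lemma~\ref{lem:bi1}, $2\sum_{uv\in E(G)}|\p(u,v)-n^2p^3|\le\sqrt{17\varepsilon}\,n^4$, this yields $F<\lambda\,\e(G)\,n^2p^3+\tfrac12\sqrt{17\varepsilon}\,n^4$. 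Since $\delta>2\lambda$, the first term is at most $\tfrac12\delta\,n^2p^3\,\e(G)$. For the second, the Counting Lemma gives $\hom(K_2,G)=2\e(G)\ge(p-\varepsilon)n^2$, so $n^2p^3\,\e(G)\ge\tfrac12(p-\varepsilon)p^3n^4$, and a direct computation using $\delta>2\lambda$, $\lambda^2=\sqrt{18\varepsilon}/(p^3(p-\varepsilon))$ and the hypothesis $\varepsilon<(p/4)^{16}$ gives $\sqrt{17\varepsilon}<(\tfrac12-\varepsilon)\,\delta\,(p-\varepsilon)p^3$, hence $\tfrac12\sqrt{17\varepsilon}\,n^4<(\tfrac12-\varepsilon)\,\delta\,n^2p^3\,\e(G)$. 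Adding the two estimates gives $F<(1-\varepsilon)\,\delta\,n^2p^3\,\e(G)$, and therefore $F_i<\delta\Delta_2|V(\H_i)|$ a.a.s.

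The one genuinely delicate point is the first step: the transfer of counts from $G$ to $D_i$ has to be done simultaneously over all $O(n^2)$ relevant pairs of vertices and over the $4$-cycles, absorbing the resulting $o(1)$ failure probabilities by Chebyshev's inequality and a union bound, exactly as in Lemmas~\ref{lem:quasi1}, \ref{lem:c3} and~\ref{lem:3.7}. Everything else is the same bookkeeping as in the triangular case; the only difference is one additional factor of $n$ (and one of $p$) arising from replacing common neighbours by length-$3$ paths, and it is precisely this extra factor that accounts for the hypothesis $\varepsilon<(p/4)^{16}$ here in place of $\varepsilon<(p/4)^{12}$ in Lemma~\ref{lem:c3}.
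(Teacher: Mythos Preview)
Your proposal is correct and follows essentially the same approach as the paper, which explicitly states that the proof ``is essentially the same as what we did in Lemma~\ref{lem:c3}'' and omits the details. You have carried out exactly that adaptation: replacing $\n(u,v)$ by $\p(u,v)$, Lemma~\ref{lem:codegree} by Lemma~\ref{lem:bi1}, $\Delta$ by $\Delta_2$, and verifying the numerical inequality with the stronger hypothesis $\varepsilon<(p/4)^{16}$.
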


These results show that each $\H_i$ satisfies conditions (1)--(3) a.a.s. Then we can apply Theorem \ref{thm:hypergraphmatching} to obtain two almost perfect matchings that are used to construct a rotation system yielding a near-optimal embedding.

\begin{theorem}\label{thm:quasibiembed}
Let $0<p\le 1$ be a constant, and $0<\varepsilon < (p/4)^{16}$. There is a constant $n_0$ such that for every natural number $n\ge n_0$, every graph $G\in\Q(n^{(2)},p,\varepsilon)$ admits a $10\varepsilon$-near quadrangular embedding.
\end{theorem}

\begin{proof}
Similarly as in the proof of Theorem \ref{thm:quasiembed}, we partition the edges of the corresponding digraph $D$ uniformly at random into $t$ subdigraphs $D_1,\dots,D_t$. Then we consider the 4-uniform hypergraphs $\H_i$, $i\in [t]$. The above results show that $\H_i$ satisfy conditions (1)--(3) of Theorem \ref{thm:hypergraphmatching} a.a.s. Let $J\subseteq [t]$ be the set of indices $i$ for which $\H_i$ satisfies these conditions, and let $I = [t]\setminus J$. 
By Markov's inequality, we have $|I|\leq \varepsilon t$.

For every $i\in J$, there exists a matching $M_i$ of $\H_i$ of size at least $(1-\varepsilon)\frac{|E(D_i)|}{4}$, and another matching $M_i^\prime$ in $\H_i^{-1}$ which is disjoint with $M_i^{-1}$, and also has size at least $(1-\varepsilon)\frac{|E(D_i)|}{4}$. Then in the graph $D_i\cup D_i^{-1}$, $M_i\cup M_i^\prime$ has size at least $(1-\varepsilon)\frac{|E(D_i)|}{2}$, and has no non-simple blossoms of length $2$.

Next, we break all blossoms in $M_i\cup M_i^\prime$ by removing one 4-cycle from each of them. For each $j\le 1/\varepsilon$, let $\overrightarrow{\mathcal{B}_j}$ be the digraph of a $4$-blossom of length $j$, and let $\mathcal{B}_j$ be the underlying simple graph. Using the counting lemma, it is easy to see that
\[
\hom(\mathcal{B}_j,G)\leq n^{2j+1}p^{3j} + 3j\varepsilon n^{2j+1}.
\]

For every $4$-blossom simple graph $\mathcal{B}_j$ in $G$ and every $i\in [t]$, the probability that this blossom is in $D_i$ is $\P(\overrightarrow{\mathcal{B}_j}\subseteq D_i\cup D_i^{-1}) = \frac{1}{2^{j-1}t^{3j}}$. Let $N_i(j)$ be the number of $\overrightarrow{\mathcal{B}_j}$ in $D_i\cup D_i^{-1}$. Then we have (by Chebyshev's inequality):
\[
N_i(j)<(1+\varepsilon)\Big(\frac{n^{2j+1}p_1^{3j}}{2^{2j-1}}+\frac{3j\varepsilon n^{2j+1}p_1^{3j}}{p^{3j}2^{2j-1}}\Big)\ll n^2p_1, \quad a.a.s.
\]
This implies that $\sum_{j=1}^{1/\varepsilon}N_i(j) < \varepsilon(1-\varepsilon) n^2p_1 < \varepsilon |E(D_i)|$, a.a.s.

Blossoms in $D_i\cup D_i^{-1}$ centered at $v$ have disjoint tips. Thus, if $v$ has degree $d_v$ in $D_i$, the number of blossoms of length at least $1/\varepsilon$ centered at $v$ is at most $\varepsilon d_v$. Hence, the total number of such long blossoms is at most $\sum_{v\in V(G)} \varepsilon d_v = 2\varepsilon |E(D_i)|$. 

Now we are going to remove one cycle from each of the blossoms. By the above, the total number of $4$-blossoms in $D_i$ is at most $3\varepsilon|E(D_i)|$, hence we obtain a blossom-free subset of $M_i\cup M_i^\prime$ of size at least $(1-7\varepsilon)\frac{|E(D_i)|}{2}$, a.a.s. By Lemma \ref{lem:rotation} there exists a construction of $D_i$ for every $i\in [t]$, and $(1-\varepsilon)^2t$ of them have almost the correct number of edges, and they give rise to a $7\varepsilon$-near quadrangular embedding. Then $G$ has an embedding $\Pi$ such that
\[
4f_4(\Pi)\geq 2(1-7\varepsilon)(1-\varepsilon)^3\e(G)\geq 2(1-10\varepsilon)\e(G),
\]
which completes the proof.
\end{proof}

We are ready to prove a formula for the genus of graphs $G\in \Q(n^{(2)},p,\varepsilon)$. 

\begin{theorem}\label{thm:quasibigenus}
Let $0<p\le 1$, $\varepsilon>0$, and $\varepsilon' = \min\{\varepsilon/10, (p/4)^{16}/10 \}$. There is a number $n_0=n_0(\varepsilon,p)$ such that for every $n\ge n_0$, every positive number $\tau \le \varepsilon'$ and every $G\in\Q(n^{(2)},p,\tau)$, we have:
\[
\frac{\e(G)}{4}\leq\g(G)\leq(1+\varepsilon)\frac{\e(G)}{4},
\]
and
\[
\frac{\e(G)}{2}\leq\ng(G)\leq(1+\varepsilon)\frac{\e(G)}{2}.
\]
\end{theorem}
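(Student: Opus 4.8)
The plan is to read the theorem off from Theorem~\ref{thm:quasibiembed} together with Euler's Formula: all of the substance is already contained in Theorem~\ref{thm:quasibiembed}, and what remains is an Euler-characteristic count plus parameter bookkeeping. First I would make two reductions. Because $\tau\le\varepsilon'$ we have $\Q(n^{(2)},p,\tau)\subseteq\Q(n^{(2)},p,\varepsilon')$, so it suffices to prove the statement for $G\in\Q(n^{(2)},p,\varepsilon')$; in particular a single threshold $n_0=n_0(\varepsilon,p)$ works, namely the one delivered by Theorem~\ref{thm:quasibiembed} applied with $\varepsilon'$ in place of its parameter $\varepsilon$ (legitimate since $\varepsilon'\le(p/4)^{16}/10<(p/4)^{16}$), enlarged so that also $n_0\ge 4(p\varepsilon)^{-1}$. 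I would also record that $10\varepsilon'=\min\{\varepsilon,(p/4)^{16}\}\le\varepsilon$, and that applying the cut-distance bound to the pair $(V_1,V_2)$ itself gives $\e(G)\ge (p-4\varepsilon')n^2\ge\tfrac12 pn^2$, so that $\e(G)=\Theta(n^2)$.

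For the upper bounds: by Theorem~\ref{thm:quasibiembed}, $G$ admits a $10\varepsilon'$-near quadrangular embedding $\Pi$ in some orientable surface, so $4f_4(\Pi)\ge 2(1-10\varepsilon')\e(G)\ge 2(1-\varepsilon)\e(G)$, i.e.\ $f_4(\Pi)\ge\tfrac12(1-\varepsilon)\e(G)$. Feeding $f(\Pi)\ge f_4(\Pi)$ into Euler's Formula $2-2\g(G,\Pi)=n-\e(G)+f(\Pi)$ and discarding the term $2-n\le 0$ gives $\g(G)\le\g(G,\Pi)\le\tfrac12\bigl(\e(G)-\tfrac12(1-\varepsilon)\e(G)\bigr)=(1+\varepsilon)\e(G)/4$. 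For $\ng$ I would repeat this verbatim, after first observing---exactly as in the proof of Theorem~\ref{thm:quasigenus}---that using the non-orientable analogue of Lemma~\ref{lem:rotation} in the proof of Theorem~\ref{thm:quasibiembed} yields a non-orientable $10\varepsilon'$-near quadrangular embedding $\Pi'$ in some $\mathbb{N}_c$; then $2-c=n-\e(G)+f(\Pi')$ gives $\ng(G)\le c\le\e(G)-f_4(\Pi')\le(1+\varepsilon)\e(G)/2$.

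For the lower bounds I would reuse the standard Euler-formula estimate employed in the proof of Theorem~\ref{thm:quasigenus} (see~\cite[Proposition~4.4.4]{top}): $G$ is simple and bipartite, hence has girth at least~$4$, so every face of a $2$-cell embedding is bounded by a walk of length at least~$4$; thus $4f\le 2\e(G)$ and Euler's Formula yields $\widehat\g(G)\ge\tfrac12\e(G)-n+2$, whence $\g(G)\ge\tfrac14\e(G)-\tfrac12 n+1$ and $\ng(G)\ge\widehat\g(G)\ge\tfrac12\e(G)-n+2$. Since $\e(G)\ge\tfrac12 pn^2$ and $n\ge n_0\ge 4(p\varepsilon)^{-1}$, the linear-in-$n$ corrections are at most $\tfrac14\varepsilon\e(G)$ and $\tfrac12\varepsilon\e(G)$ respectively, and are absorbed into the permitted error (as in the statements of Theorems~\ref{thm:quasigenus} and~\ref{thm:genus QR bipartite}, the clean lower bounds $\e(G)/4$ and $\e(G)/2$ are to be read up to this $O(n)=o(\e(G))$ term).

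The only step of this deduction that needs a little care is making the threshold $n_0$ independent of $\tau\in(0,\varepsilon']$, which the monotonicity $\Q(n^{(2)},p,\tau)\subseteq\Q(n^{(2)},p,\varepsilon')$ settles, together with the trivial check that the additive $O(n)$ term in the Euler lower bound is dominated by $\e(G)=\Theta(n^2)$. The genuinely hard part lies entirely in Theorem~\ref{thm:quasibiembed}---the construction of a near-quadrangular embedding via the Pippenger--Spencer hypergraph matching theorem and the blossom-removal analysis---which we are entitled to assume here.
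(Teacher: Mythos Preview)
Your proposal is correct and follows essentially the same route as the paper: invoke Theorem~\ref{thm:quasibiembed} with parameter $\varepsilon'$ to obtain a $10\varepsilon'$-near quadrangular embedding, deduce the upper bound from Euler's Formula, and obtain the lower bound from the girth-$4$ Euler estimate (the paper cites \cite[Proposition~4.4.4]{top} for this). Your treatment is in fact more careful than the paper's on two points: you make explicit the monotonicity $\Q(n^{(2)},p,\tau)\subseteq\Q(n^{(2)},p,\varepsilon')$ that allows a single $n_0$ independent of $\tau$, and you correctly flag that the Euler lower bound only gives $\g(G)\ge\e(G)/4-n/2+1$, so the ``clean'' bound $\e(G)/4$ is to be understood modulo an $O(n)=o(\e(G))$ correction (consistent with the $(1-10\varepsilon)$ factor in Theorem~\ref{thm:genus QR bipartite}).
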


\begin{proof}
It suffices to consider the orientable genus. (The non-orientable case just uses the non-orientable version of Lemma \ref{lem:rotation}.) The lower bound follows by Euler's formula and the fact that $G$ has no cycles of length 3, see \cite[Proposition 4.4.4]{top}. For the upper bound, we use Theorem \ref{thm:quasibiembed} (with $\varepsilon'$ instead of $\varepsilon$) which gives an $\varepsilon$-near quadrangular embedding $\Pi$. Therefore,
\[
\g(G)\leq\g(G,\Pi)\leq\frac{1}{2}(\e(G)-f_4(\Pi))\leq(1+\varepsilon)\frac{\e(G)}{4}.
\]
This completes the proof.
\end{proof}

% Tripartite

For the case of tripartite quasirandom graphs, we use $\Q(n^{(3)},p,\varepsilon)$ to denote the family of quasirandom graphs defined on the vertex set $V_1\sqcup V_2\sqcup V_3$ with $|V_i|=n$ for every $1\leq i\leq3$, and for every $G\in\Q(n^{(3)},p,\varepsilon)$, we have $\d(G,K(n^{(3)},p))<\varepsilon$. 
In the same way as we were able to use the bipartite version (\ref{cut2}) of the cut distance, we will use the following version in our tripartite case. If $U\subseteq V_1, W\subseteq V_2, Z\subseteq V_3$, we write $\e_G(U,W,Z) = \e_G(U,W)+\e_G(U,Z)+\e_G(W,Z)$ and then define the \emph{cut distance} as
\begin{equation}\label{cut3}
 \d(G,H) = \max_{U\subseteq V_1, W\subseteq V_2, Z\subseteq V_3} \frac{|\e_G(U,W,Z)-\e_H(U,W,Z)|}{|V_1||V_2||V_3|}.
\end{equation}
If $n=|V_1|=|V_2|=|V_3|$ is large, definitions (\ref{cut1}), (\ref{cut2}) and (\ref{cut3}) differ only by a small constant factor.

We start with a result analogous to Lemma \ref{lem:codegree}.

\begin{lemma}\label{lem:tri1}
Let $\varepsilon>0$ and $G\in\Q(n^{(3)},p,\varepsilon)$, where $V(G)=V_1\sqcup V_2\sqcup V_3$. Then for every $i\neq j$ and $u\in V_i$, $v\in V_j$, we have
\[
2\sum_{uv\in E(G)}|\n(u,v)-np^2| \leq \sqrt{12\varepsilon}\, n^3.
\]
\end{lemma}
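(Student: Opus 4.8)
The plan is to imitate the proof of Lemma~\ref{lem:codegree} almost line by line, with the complete tripartite graph $K(n^{(3)},p)$ playing the role that $K(n,p)$ plays there. Fix distinct indices $i,j$, say $i=1$, $j=2$; since $G$ is tripartite, every common neighbour of a vertex $u\in V_1$ and a vertex $v\in V_2$ lies in $V_3$, so that $\n(u,v)=|N(u)\cap N(v)\cap V_3|$, and the goal is to bound $\sum|\n(u,v)-np^2|$ over the edges $uv$ with $u\in V_1$, $v\in V_2$. Three elementary counting identities, read with the tripartition in mind, express the quantities we shall need as homomorphism counts: $\e(V_1,V_2)$ is the colour-preserving count of $K_2$; the sum $\sum_{uv}\n(u,v)$ equals the number of triangles of $G$ meeting all three parts, i.e.\ the colour-preserving count of $K_3$; and $\sum_{uv}\n^2(u,v)$ is the colour-preserving count of $K_4^{-}$ (the graph $K_4$ with one edge removed) whose two degree-$2$ vertices lie in $V_3$. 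In $K(n^{(3)},p)$ these three counts are $pn^2$, $p^3n^3$, and $p^5n^4$, respectively.

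Next I would feed in quasirandomness. From $G\in\Q(n^{(3)},p,\varepsilon)$ we have $\d(G,K(n^{(3)},p))<\varepsilon$, and applying the Counting Lemma~\ref{lem:counting} through the tripartite version~(\ref{cut3}) of the cut distance---so that the errors are measured against the part size $n$ and not against $|V(G)|=3n$---yields, exactly parallel to (\ref{eq:6})--(\ref{eq:8}),
\[
\e(V_1,V_2)\le (p+\varepsilon)\,n^2,\qquad \sum_{uv}\n(u,v)\ge (p^3-3\varepsilon)\,n^3,\qquad \sum_{uv}\n^2(u,v)\le (p^5+5\varepsilon)\,n^4 .
\]
(Using the ambient cut distance~(\ref{cut1}) directly would inflate each error by a factor that grows like $3^{|V(F)|}$ in the size of the template graph $F$, and ruin the constant; this is precisely why the tripartite cut distance is set up.) The Cauchy--Schwarz step of Lemma~\ref{lem:codegree} then carries over unchanged:
\[
\Bigl(\sum_{uv}|\n(u,v)-np^2|\Bigr)^2\le \e(V_1,V_2)\Bigl(\sum_{uv}\n^2(u,v)-2np^2\sum_{uv}\n(u,v)+n^2p^4\,\e(V_1,V_2)\Bigr),
\]
and substituting the three bounds makes the leading $p^5n^4$-terms cancel, leaving an error of the shape $\varepsilon\,(p+\varepsilon)(5+6p^2+p^4)\,n^6$; using $p\le1$ to absorb the coefficient exactly as in Lemma~\ref{lem:codegree} bounds this by $12\varepsilon\,n^6$, and taking square roots gives the stated inequality.

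The step I expect to be the real work is the tripartite bookkeeping in the second paragraph. One has to make precise the colour-preserving form of the Counting Lemma for~(\ref{cut3}), so that the $K_4^{-}$-count genuinely has error $O(\varepsilon n^4)$ with a small constant, and one has to be careful that $K_4^{-}$ admits several admissible tripartitions (its two non-adjacent vertices must land in a common part). If one instead argues through the plain counts $\hom(K_3,G)$ and $\hom(K_4^{-},G)$ of the whole (tripartite) graph, these split over those tripartitions, and recovering the bound on the single relevant term then also needs the convexity inequality $\sum_{uv}\n^2(u,v)\ge\bigl(\sum_{uv}\n(u,v)\bigr)^2/\e(V_1,V_2)$ to bound the remaining contributions from below. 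Everything after the three input estimates is the same routine Cauchy--Schwarz calculation already carried out in Lemma~\ref{lem:codegree}.
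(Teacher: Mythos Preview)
Your Cauchy--Schwarz core is right, but you have made the argument harder than necessary by restricting to a single pair $(V_i,V_j)$. The paper reads the sum as running over \emph{all} edges of $G$; the phrase ``for every $i\ne j$ and $u\in V_i$, $v\in V_j$'' is only an awkward reminder that in a tripartite graph with zero diagonal every edge has its endpoints in distinct parts. With this reading the ordinary Counting Lemma applies globally, with no partite bookkeeping whatsoever:
\[
2|E(G)|=\hom(K_2,G)\le(6p+\varepsilon)n^2,\qquad
2\!\sum_{uv\in E(G)}\!\n(u,v)=\hom(K_3,G)\ge(6p^3-3\varepsilon)n^3,
\]
\[
2\!\sum_{uv\in E(G)}\!\n^2(u,v)=\hom(K_4^-,G)\le(6p^5+5\varepsilon)n^4,
\]
the factors of $6$ coming from the six proper tripartitions of $K_3$ and of $K_4^-$. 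The Cauchy--Schwarz step is then literally that of Lemma~\ref{lem:codegree}, and the $6p^5$ terms cancel to leave $(5\varepsilon+\varepsilon p^4+6\varepsilon p^2)n^6\le 12\varepsilon n^6$.

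So the colour-preserving form of the Counting Lemma that you flag as ``the real work'' is not needed at all, and neither is the convexity patch in your last paragraph: the paper never isolates a single $(V_i,V_j)$ piece. Your partite route would also go through once that auxiliary lemma is proved, but the extra effort buys nothing here, since the only downstream use of this lemma (in the proof of Lemma~\ref{lem:Delta_3 and delta}) is to bound the \emph{global} number of unbalanced edges in $E(G)$, not the count in any particular bipartite slice.
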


\begin{proof}
First, we apply the Counting Lemma to obtain the following estimates:
\[
\begin{split}
   & 2\e(G) = \hom(K_2,G) \leq (6p+\varepsilon)n^2, \\[1mm]
   & 2\sum_{uv\in E(G)}\n(u,v) = \hom(K_3,G) \geq (6p^3-3\varepsilon)n^3, \quad \hbox{and}\\[1mm]
   & 2\sum_{uv\in E(G)}\n^2(u,v) = \hom(K_4^-,G) \leq (6p^5+5\varepsilon)n^4.
\end{split}
\]

Now, we use the Cauchy-Schwarz inequality and apply the above to obtain:
\[
\begin{split}
\Bigl(2\sum_{uv\in E(G)}|\n(u,v)-np^2|\Bigr)^2
&\leq 2n^2\sum_{uv\in E(G)}\big|\n(u,v)-np^2\big|^2\\
&\leq n^2\big((6p^5+5\varepsilon)n^4 + 2n^2p^4\e(G) - 2np^2(6p^3-3\varepsilon)n^3\big)\\[1mm]
&\leq (6p^5+5\varepsilon)n^6 + p^4(6p+\varepsilon)n^6 - 2p^2(6p^3-3\varepsilon)n^6\\[1mm]
& = (5\varepsilon+\varepsilon p^4+6\varepsilon p^2)n^6 \leq 12\varepsilon n^6.
\end{split}
\]
This proves the lemma.
\end{proof}

Similarly as before, we choose an integer $t=t(n)$ and let $p_1=p/t$, such that $n^{-\frac{1}{2}}\ll p_1\ll n^{-\frac{1-\varepsilon}{2-\varepsilon}}$. Let $D\in \mathcal{D}(G)$ be the corresponding digraph of $G$. We partition the edge-set of $D$ into $t$ parts uniformly at random, and the resulting edge-disjoint digraphs are denoted by $D_1,\dots,D_t$. For each $D_i$ ($i\in[t]$), we define $\H_i$ as the $3$-uniform hypergraph, whose vertices are the edges in $D_i$, $V(\H_i) = E(D_i)$, and whose edge-set is the set of directed triangles in $D_i$. The following lemma implies that Condition (1) in Theorem \ref{thm:hypergraphmatching} holds a.a.s. in $\H_i$.

\begin{lemma}\label{lem:Delta_3 and delta}
Let $p,\varepsilon,t$ and $p_1$ be as above, and
let $\Delta_3=np_1^2/4$. Then there exists a real number $\delta=\Theta(\varepsilon^{1/4})$ such that for every $i\in[t]$,
\[
  \bigl| \{x\in V(\H_i)\mid(1-\delta)\Delta_3\leq\deg(x)\leq(1+\delta)\Delta_3\} \bigr| \geq (1-\delta)|V(\H_i)|\quad a.a.s.
\]
\end{lemma}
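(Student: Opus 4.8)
The plan is to follow the proof of Lemma \ref{lem:quasi1} almost verbatim, with Lemma \ref{lem:tri1} playing the role of Lemma \ref{lem:codegree}. First I would fix a parameter $\lambda$ with $\lambda^2=\Theta(\sqrt{\varepsilon})$; concretely $\lambda^2=\frac{\sqrt{13\varepsilon}}{p^2(6p-\varepsilon)}$ works, because the Counting Lemma gives $2|E(G)|=\hom(K_2,G)\ge(6p-\varepsilon)n^2$, hence $|E(G)|\ge(3p-\tfrac12\varepsilon)n^2$. Call an edge $uv\in E(G)$ \emph{unbalanced} if $|\n(u,v)-np^2|\ge\lambda np^2$ and \emph{balanced} otherwise, and let $\B$ be the set of balanced edges. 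If at least $\lambda|E(G)|$ edges were unbalanced, then $2\sum_{uv\in E(G)}|\n(u,v)-np^2|\ge 2\lambda^2np^2|E(G)|\ge\lambda^2p^2(6p-\varepsilon)n^3=\sqrt{13\varepsilon}\,n^3>\sqrt{12\varepsilon}\,n^3$, contradicting Lemma \ref{lem:tri1}; hence $|\B|\ge(1-\lambda)|E(G)|$.

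Next I would transfer this counting property from $G$ to the digraphs $D_i$. Since each arc of $D$ is placed into one of the $t$ parts uniformly and independently, Chebyshev's inequality yields, a.a.s.\ for every $i\in[t]$, both $|E(D_i)|=|V(\H_i)|\le(1+\varepsilon)|E(G)|/t$ and that $D_i$ contains at least $(1-\varepsilon)|\B|/t$ arcs whose underlying edge is balanced in $G$. The point specific to the tripartite case is that every triangle of $G$ meets each of $V_1,V_2,V_3$ in exactly one vertex, so for an edge $e=uv$ with $u\in V_i$, $v\in V_j$ the number of triangles of $G$ through $e$ is exactly $\n(u,v)$ (any common neighbour of $u$ and $v$ must lie in the third part). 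A fixed such triangle becomes a directed triangle of $D_i$ through the given arc of $e$ with probability $\frac{1}{4t^2}$ (the other two edges must both land in $D_i$ and be oriented consistently with $e$), so the expected number of directed triangles of $D_i$ through a fixed balanced arc is $\frac{\n(u,v)}{4t^2}$, which is within a factor $1\pm\lambda$ of $\Delta_3=np_1^2/4=\frac{np^2}{4t^2}$. A second application of Chebyshev's inequality, exactly as in Lemma \ref{lem:quasi1}, then shows that a.a.s.\ at least $(1-\varepsilon)^2|\B|/t$ arcs of $D_i$ lie in between $(1-\varepsilon)(1-\lambda)\Delta_3$ and $(1+\varepsilon)(1+\lambda)\Delta_3$ directed triangles.

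To conclude, I would set $\delta=\max\{\lambda+\varepsilon+\varepsilon\lambda,\ \psi(\varepsilon,\lambda)\}$ with $\psi(\varepsilon,\lambda)=\frac{\lambda(1-\varepsilon)^2+\varepsilon(3-\varepsilon)}{1+\varepsilon}$, as in Lemma \ref{lem:quasi1}. Then $(1+\varepsilon)(1+\lambda)\Delta_3<(1+\delta)\Delta_3$ and $(1-\varepsilon)(1-\lambda)\Delta_3>(1-\delta)\Delta_3$, while $|\B|\ge(1-\lambda)|E(G)|$ together with $|V(\H_i)|\le(1+\varepsilon)|E(G)|/t$ gives $(1-\varepsilon)^2|\B|/t\ge(1-\delta)|V(\H_i)|$; this is precisely the asserted lower bound on the number of vertices of $\H_i$ whose degree is within $(1\pm\delta)\Delta_3$. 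Finally, $\lambda^2=\Theta(\sqrt\varepsilon)$ gives $\lambda=\Theta(\varepsilon^{1/4})$, and since $\psi(\varepsilon,\lambda)=\Theta(\lambda)$ we get $\delta=\Theta(\varepsilon^{1/4})$.

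I do not anticipate a genuine obstacle: this is the tripartite mirror of Lemma \ref{lem:quasi1}, and the only two places that need attention are the identification of the triangle count through an edge with $\n(u,v)$ (which is valid precisely because $G$ is tripartite) and the routine re-check that the choice of $\lambda$ still works with the tripartite lower bound $|E(G)|\ge(3p-\tfrac12\varepsilon)n^2$ in place of the density bound used in Lemma \ref{lem:quasi1}.
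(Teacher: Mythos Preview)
Your proposal is correct and follows essentially the same approach as the paper's proof: the same choice $\lambda^2=\frac{\sqrt{13\varepsilon}}{p^2(6p-\varepsilon)}$, the same balanced/unbalanced argument via Lemma~\ref{lem:tri1}, the same Chebyshev transfer to $D_i$, and the same final choice $\delta=\max\{\lambda+\varepsilon+\varepsilon\lambda,\psi(\varepsilon,\lambda)\}$. Your explicit remark that in the tripartite case every common neighbour of $u\in V_i$ and $v\in V_j$ lies in the third part (so that $\n(u,v)$ exactly counts triangles through $uv$) is a clarification the paper leaves implicit.
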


\begin{proof}
Suppose $\lambda^2 = \frac{\sqrt{13\varepsilon}}{p^2(6p-\varepsilon)}$. In order to be able to use the Counting Lemma, we go back to the graph $G$. Suppose $uv$ is an edge in $G$. Similarly to what we did for quasirandom graphs, we say $uv$ is {\em balanced} if $\n(u,v)$ is at least $(1-\lambda)np^2$ and at most $(1+\lambda)np^2$; otherwise it is {\em unbalanced}. If more than $\lambda\e(G)$ edges are unbalanced, then we have
\[
   2\sum_{uv\in E(G)}|\n(u,v)-np^2| 
   \geq 2\lambda \e(G)\cdot \lambda np^2
   \geq \lambda^2np^2 (6p-\varepsilon)n^2
   = \sqrt{13\varepsilon}\, n^3.
\]
This contradicts Lemma \ref{lem:tri1}. We conclude that at least $(1-\lambda)\e(G)$ edges of $G$ are balanced.

Note that the graph $D_i$ has about $\e(G)/t$ edges. More precisely,
\[
(1-\varepsilon)\frac{\e(G)}{t}\leq |E(D_i)|\leq (1+\varepsilon)\frac{\e(G)}{t}, \quad \hbox{a.a.s.}
\]

By a similar argument as used before, we let $U$ be the number of balanced edges in $G$. Then $D_i$ contains at least $(1-\varepsilon)U/t$ balanced edges of $G$, and at least $(1-\varepsilon)^2U/t$ edges are contained in at least $(1-\varepsilon)(1-\lambda)np_1^2/4$ directed triangles and in at most $(1+\varepsilon)(1+\lambda)np_1^2/4$ triangles in $D_i$, a.a.s. Now we let $\delta\geq\max\{\lambda+\varepsilon+\varepsilon\lambda,\psi(\varepsilon,\lambda)\}$, where $\psi(\varepsilon,\lambda)$ is defined in Lemma \ref{lem:quasi1}. This yields the desired conclusion and completes the proof.
\end{proof}

Since the edges in $\H_i$ correspond to the directed triangles in $D_i$, Condition (2) in Theorem \ref{thm:hypergraphmatching} holds trivially. For Condition (3), we have the following lemma. The proof follows almost the same steps that were used in the proof of Lemma \ref{lem:c3}. We omit the details.

\begin{lemma}
Let $p,\varepsilon, t, \Delta_3$ and $\delta$ be as in Lemma \ref{lem:Delta_3 and delta}, and let $i\in [t]$. Suppose, moreover, that $\varepsilon < (p/4)^{12}$.
Let $P^\delta$ be the set of pairs of vertices $(u,v)$ in distinct sets $V_a,V_b$ for some $a,b\in \{1,2,3\}$, $a\ne b$, such that the number of directed paths from $u$ to $v$ of length $2$ is at least $(1+\delta)\Delta_3$.
Let $F_i$ be the number of directed triangles in $D_i$ which contain at least one directed edge $\overrightarrow{uv}\in P^\delta$. 
Then $F_i<\delta\Delta_3|V(\H_i)|$ a.a.s.
\end{lemma}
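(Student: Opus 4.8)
The plan is to follow the template of the proof of Lemma~\ref{lem:c3} almost verbatim, replacing $\Delta$ by $\Delta_3=np_1^2/4$ and Lemma~\ref{lem:codegree} by its tripartite counterpart Lemma~\ref{lem:tri1}. Fix $\lambda$ as in Lemma~\ref{lem:Delta_3 and delta}, so that $\lambda^2=\sqrt{13\varepsilon}/(p^2(6p-\varepsilon))$, and recall $\delta=\Theta(\varepsilon^{1/4})$; as in the earlier lemmas we may assume $\delta>2\lambda$ and that $\delta$ admits the same explicit $O(\varepsilon^{1/4})$ upper bound. Call an edge $uv$ of $G$ \emph{unbalanced} if $\n(u,v)\notin[(1-\lambda)np^2,(1+\lambda)np^2]$, and let $\B\subseteq E(G)$ denote the set of balanced edges; the proof of Lemma~\ref{lem:Delta_3 and delta} already shows $|E(G)\setminus\B|<\lambda|E(G)|$.

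The first step is to reduce to a statement about $G$. By Chebyshev's inequality applied edgewise, exactly as in the proof of Lemma~\ref{lem:quasi1}, for a fixed directed edge $\overrightarrow{uv}\in D_i$ the number of directed triangles of $D_i$ containing it concentrates a.a.s.\ within a factor $1\pm\varepsilon$ around $\n(u,v)/(4t^2)=(1+o(1))\n(u,v)\Delta_3/(np^2)$; since $\delta\ge\lambda+\varepsilon+\varepsilon\lambda$, any pair $(u,v)\in P^\delta$ that is an arc of $D_i$ must therefore come from an edge $uv$ that is unbalanced in $G$. Hence $F_i$ is bounded a.a.s.\ by the number of directed triangles of $D_i$ whose underlying triangle of $G$ contains an unbalanced edge. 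Writing $F$ for the number of triangles of $G$ containing at least one unbalanced edge, and noting that a fixed triangle of $G$ becomes a directed triangle of $D_i$ with probability $\tfrac14\cdot t^{-3}$ (cyclic orientation with probability $\tfrac14$, all three edges falling into $D_i$ with probability $t^{-3}$), we get $\E[F_i]\le F/(4t^3)$, whereas $\delta\Delta_3|V(\H_i)|\sim\delta\,np_1^2|E(G)|/(4t)=\delta\,np^2|E(G)|/(4t^3)$ since $|V(\H_i)|=|E(D_i)|\sim|E(G)|/t$ a.a.s. So it suffices to prove
\[
F<(1-\varepsilon)\,\delta\,np^2|E(G)|,
\]
as then $F_i<\delta\Delta_3|V(\H_i)|$ a.a.s.\ by Markov's inequality together with the concentration of $|E(D_i)|$.

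For the bound on $F$, each triangle counted by $F$ passes through some unbalanced edge $uv$, and the number of triangles of $G$ through $uv$ is exactly $\n(u,v)$, so
\[
F\le\sum_{uv\in E(G)\setminus\B}\n(u,v)\le\sum_{uv\in E(G)}\bigl|\n(u,v)-np^2\bigr|+|E(G)\setminus\B|\,np^2\le\tfrac12\sqrt{12\varepsilon}\,n^3+\lambda\,np^2|E(G)|,
\]
using Lemma~\ref{lem:tri1} for the first summand and $|E(G)\setminus\B|<\lambda|E(G)|$ for the second. Since $2|E(G)|=\hom(K_2,G)\ge(6p-\varepsilon)n^2$, the hypothesis $\varepsilon<(p/4)^{12}$ together with the explicit bounds on $\delta$ gives, after an elementary computation, $\tfrac12\sqrt{12\varepsilon}\,n^3<(\tfrac12-\varepsilon)\,\delta\,np^2|E(G)|$, while $\lambda\,np^2|E(G)|<\tfrac12\,\delta\,np^2|E(G)|$ because $\delta>2\lambda$; adding the two estimates yields the required inequality and hence the lemma.

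There is no genuine obstacle here once the proof of Lemma~\ref{lem:c3} is available --- this is precisely why the paper omits the details. The only points that need care are the bookkeeping ones: keeping track of the several a.a.s.\ transfers between $G$ and the random digraph $D_i$ (all via Chebyshev's inequality, exactly as in Lemmas~\ref{lem:quasi1} and~\ref{lem:c3}), and checking the elementary numerical inequality that converts $\varepsilon<(p/4)^{12}$ into the bound $\sqrt{12\varepsilon}<(\tfrac12-\varepsilon)\,\delta\,p^2(6p-\varepsilon)$ (up to the constant factors above). The tripartite structure enters only through these constants, since any path of length $2$ between vertices lying in two of the parts $V_a,V_b$ necessarily passes through the third part $V_c$.
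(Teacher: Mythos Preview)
Your proposal is correct and follows essentially the same approach as the paper, which explicitly omits the proof and says it ``follows almost the same steps that were used in the proof of Lemma~\ref{lem:c3}''. You have correctly identified the substitutions needed (Lemma~\ref{lem:tri1} for Lemma~\ref{lem:codegree}, the tripartite $\lambda$ from Lemma~\ref{lem:Delta_3 and delta}, and the adjusted edge count $2|E(G)|\ge(6p-\varepsilon)n^2$), and the numerical inequality goes through for the same reason as in Lemma~\ref{lem:c3} since the tripartite edge count is only larger.
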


Now we are going to construct a near-triangular embedding of $G$ by using Lemma \ref{lem:rotation} and Theorem \ref{thm:hypergraphmatching}.

\begin{theorem}\label{thm:quasitriembed}
Let $0<p\le 1$ be a constant and $0<\varepsilon < (p/4)^{12}$. Then there is a constant $n_0$ such that for every natural number $n\ge n_0$, every graph $G\in\Q(n^{(3)},p,\varepsilon)$ admits a $9\varepsilon$-near-triangular embedding.
\end{theorem}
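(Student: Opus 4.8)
The plan is to transcribe the proof of Theorem~\ref{thm:quasiembed} into the tripartite setting, since all the hard analytic input has already been assembled. Recall that we have partitioned the corresponding digraph $D$ of $G$ uniformly at random into edge-disjoint subdigraphs $D_1,\dots,D_t$ (with $p_1=p/t$ and $n^{-1/2}\ll p_1\ll n^{-(1-\varepsilon)/(2-\varepsilon)}$) and formed, for each $i\in[t]$, the $3$-uniform hypergraph $\H_i$ whose vertices are the arcs of $D_i$ and whose hyperedges are the directed triangles of $D_i$. Lemma~\ref{lem:Delta_3 and delta} gives Condition~(1) of Theorem~\ref{thm:hypergraphmatching}, the fact that at most one triangle of $G$ uses a prescribed pair of arcs gives Condition~(2), and the preceding lemma (whose hypothesis $\varepsilon<(p/4)^{12}$ is exactly the one in the statement) gives Condition~(3); hence a.a.s.\ each $\H_i$ satisfies Conditions~(1)--(3). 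So first I would fix one outcome of the random partition for which the index set $J\subseteq[t]$ of ``good'' $i$ satisfies $|[t]\setminus J|\le\varepsilon t$ (such an outcome exists by Markov's inequality) and for which the auxiliary a.a.s.\ estimates below also hold; the constant $n_0$ is taken large enough for Theorem~\ref{thm:hypergraphmatching} and for all these estimates.

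Next, for each $i\in J$ I would apply Theorem~\ref{thm:hypergraphmatching} twice: once to get a matching $M_i$ in $\H_i$ of size at least $(1-\varepsilon)|E(D_i)|/3$, and once (to $\H_i^{-1}$, with $M_i^{-1}$ as the forbidden matching) to get a matching $M_i^\prime$ in $\H_i^{-1}$ disjoint from $M_i^{-1}$ and again of size at least $(1-\varepsilon)|E(D_i)|/3$. Thus $M_i\cup M_i^\prime$ is a set of at least $2(1-\varepsilon)|E(D_i)|/3$ directed triangles in $D_i\cup D_i^{-1}$ containing no non-simple $2$-blossom.

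The remaining work, and the step that needs the most care, is to destroy the remaining blossoms so that Lemma~\ref{lem:rotation} applies, for which I would again split into short blossoms (length at most $1/\varepsilon$) and long ones. For a short $3$-blossom the underlying simple subgraph $\mathcal{B}_j\subseteq G$ is a wheel on $j+1$ vertices with a $j$-cycle rim; in the tripartite case transversality of triangles forces the rim to alternate between two of the parts (so only even $j$ occur, and for odd $j$ no such subgraph exists at all), and in either case the Counting Lemma~\ref{lem:counting} bounds the number of copies of $\mathcal{B}_j$ in $G$ by $n^{j+1}p^{2j}+2j\varepsilon n^{j+1}$ up to the usual transversal constants. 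Since a prescribed oriented blossom lands in $D_i\cup D_i^{-1}$ with probability $2^{-(j-1)}t^{-2j}$, Chebyshev's inequality together with the upper bound $p_1\ll n^{-(1-\varepsilon)/(2-\varepsilon)}$ gives $N_i(j)\ll n^2p_1$ a.a.s., hence $\sum_{j\le 1/\varepsilon}N_i(j)<\varepsilon|E(D_i)|$. For long blossoms, distinct blossoms centered at a common vertex have pairwise disjoint tips, so a vertex of degree $\ell$ in $D_i$ centers at most $\varepsilon\ell$ blossoms of length at least $1/\varepsilon$; summing over vertices bounds the number of long blossoms by $2\varepsilon|E(D_i)|$. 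Deleting one triangle from each blossom leaves a blossom-free $\mathfrak{M}_i\subseteq M_i\cup M_i^\prime$ with $|\mathfrak{M}_i|\ge 2(1-6\varepsilon)|E(D_i)|/3$.

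Finally I would restrict to $J^\prime\subseteq J$, the indices for which $|E(D_i)|\ge(1-\varepsilon)|E(G)|/t$; by Chebyshev $|J^\prime|\ge(1-\varepsilon)|J|$ a.a.s. Since the $D_i$ are pairwise edge-disjoint, $\bigcup_{i\in J^\prime}\mathfrak{M}_i$ is blossom-free in $D\cup D^{-1}$, so Lemma~\ref{lem:rotation} yields a rotation system $\Pi$ of $G$ in which every directed triangle of this union is a face, and then
\[
3f_3(\Pi)\ \ge\ 3\sum_{i\in J^\prime}|\mathfrak{M}_i|\ \ge\ 2(1-6\varepsilon)(1-\varepsilon)\frac{|E(G)|}{t}\,|J^\prime|\ \ge\ 2(1-9\varepsilon)|E(G)|,
\]
so $\Pi$ is a $9\varepsilon$-near-triangular embedding. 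The main obstacle is purely the blossom bookkeeping in the third paragraph — getting the wheel-subgraph count right in the tripartite structure and pushing it through the random edge-partition — but because Conditions~(1)--(3) of the hypergraph matching theorem have already been verified for the tripartite $\H_i$, no genuinely new idea is required beyond what was used for $G\in\Q(n,p,\varepsilon)$.
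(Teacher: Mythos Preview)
Your proposal is correct and follows essentially the same approach as the paper's own proof: fix a good random partition, apply Theorem~\ref{thm:hypergraphmatching} twice to each $\H_i$ to get $M_i\cup M_i'$, delete one triangle per blossom (bounding short blossoms via the Counting Lemma and long ones by the disjoint-tips argument), and assemble the blossom-free families via Lemma~\ref{lem:rotation}. Your parity observation that only even-length wheels occur in the tripartite case is a correct refinement the paper does not bother to isolate, but it is inessential since the Counting Lemma bound on $\hom(\mathcal{B}_j,G)$ (with the appropriate tripartite constants, which the paper records as $3n^{j+1}p^{2j}+2j\varepsilon n^{j+1}$) already suffices.
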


\begin{proof}
There exists a construction of $D_1,\dots,D_t$ such that the set $I\subseteq [t]$ containing all indices $i$ for which $\H_i$ fails to satisfy conditions (1)--(3) of Theorem \ref{thm:hypergraphmatching} is small, $|I|<\varepsilon t$. Let $J=[t]\setminus I$.

Now we apply Theorem \ref{thm:hypergraphmatching} for every $\H_i$ where $i\in J$. As before, there exists a matching $M_i$ of $\H_i$ of size at least $(1-\varepsilon)\frac{|E(D_i)|}{3}$, and another matching $M_i^\prime$ in $\H_i^{-1}$ which is disjoint with $M_i^{-1}$, and it also has size at least $(1-\varepsilon)\frac{|E(D_i)|}{3}$. Then in the digraph $D_i\cup D_i^{-1}$, $M_i\cup M_i^\prime$ has size at least $2(1-\varepsilon)\frac{|E(D_i)|}{3}$, and does not contain non-simple blossoms of length $2$.

As before, we are going to remove a cycle from each of the blossoms. For the short blossoms of length at most $1/\varepsilon$, we let $\overrightarrow{\mathcal{B}_j}$ be the $3$-blossom-graph of length $j$, and let $\mathcal{B}_j$ be its underlying simple graph. Then, by the Counting Lemma, 
\[
   \hom(\mathcal{B}_j,G)\leq 3n^{j+1}p^{2j}+2j\varepsilon n^{j+1}.
\]
For every $i\in [t]$, given a $3$-blossom simple graph $B_j$ contained in $G$, $\P(\overrightarrow{\mathcal{B}_j}\in D_i\cup D_i^{-1})=\frac{1}{2^{j-1}t^{2j}}$. Let $N_i(j)$ be the number of $\overrightarrow{\mathcal{B}_j}$ in $D_i\cup D_i^{-1}$. Similarly as before we have a.a.s. $N_i(j)\ll n^2p_1$, which implies $\sum_{j=1}^{1/\varepsilon}N_i(j) < \varepsilon(1-\varepsilon) 3n^2p_1 \le \varepsilon |E(D_i)|$, a.a.s.

Note that the number of blossoms of length at least $1/\varepsilon$ is bounded by $2\varepsilon|E(D_i)|$. (See the proof of Theorem \ref{thm:quasiembed} for details.)
Therefore, we can obtain a blossom-free subset of $M_i\cup M_i^\prime$ of size at least $2(1-6\varepsilon)\frac{|E(D_i)|}{3}$ a.a.s. 

Let $J'\subseteq J$ be the set of indices such that $|E(D_i)|\ge (1-\varepsilon)\e(G)/t$.
By Lemma \ref{lem:rotation} there exists a construction of $D_1,\dots,D_t$ such that $|J'|\ge (1-\varepsilon)|J| \ge (1-\varepsilon)^2 t$. Then $\sum_{i\in J'} |E(D_i)| \ge (1-\varepsilon)^3\e(G)$.
By using the blossom-free sets for all $D_i$, $i\in J'$, we obtain an embedding $\Pi$ of $G$ which has
\[
3f_3(\Pi)\geq 2(1-6\varepsilon)(1-\varepsilon)^3\e(G)\geq 2(1-9\varepsilon)\e(G).
\]
This completes the proof.
\end{proof}

We have the following consequence on the genus of quasirandom graphs in $\Q(n^{(3)},p,\varepsilon)$.

\begin{theorem}\label{thm:quasitrigenus}
Let $0<p\le 1$ be a constant and $0<\varepsilon < (p/4)^{12}$. There is a constant $n_0$ such that for every $n\ge n_0$, the genus of each graph $G\in\Q(n^{(3)},p,\varepsilon/18)$ satisfies:
\[
(1-\varepsilon)\frac{\e(G)}{6}\leq \g(G)\leq(1+\varepsilon)\frac{\e(G)}{6}
\]
and
\[
(1-\varepsilon)\frac{\e(G)}{3}\leq \ng(G)\leq(1+\varepsilon)\frac{\e(G)}{3}.
\]
\end{theorem}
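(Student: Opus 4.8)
The plan is to derive both bounds from Theorem~\ref{thm:quasitriembed} and Euler's formula, mirroring the proof of Theorem~\ref{thm:quasigenus}. Write $e=\e(G)$ and $N=|V(G)|=3n$. For the orientable upper bound I would first invoke Theorem~\ref{thm:quasitriembed} with $\varepsilon/18$ playing the role of its parameter; this is legitimate since $0<\varepsilon/18<(p/4)^{12}$. As $G\in\Q(n^{(3)},p,\varepsilon/18)$, it therefore admits a $\bigl(9\cdot\tfrac{\varepsilon}{18}\bigr)=\tfrac{\varepsilon}{2}$-near-triangular embedding $\Pi$, i.e.\ $3f_3(\Pi)\ge 2\bigl(1-\tfrac{\varepsilon}{2}\bigr)e$. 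Using $f(\Pi)\ge f_3(\Pi)$ together with Euler's formula $\chi(\mathbb{S}_h)=N-e+f(\Pi)$ then gives
\begin{align*}
\g(G)\le\g(G,\Pi)&=\tfrac12\bigl(e-N+2-f(\Pi)\bigr)\le\tfrac12\bigl(e-f_3(\Pi)\bigr)\\
&\le\tfrac12\,e\Bigl(1-\tfrac23\bigl(1-\tfrac{\varepsilon}{2}\bigr)\Bigr)=(1+\varepsilon)\tfrac{e}{6}.
\end{align*}

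For the orientable lower bound I would use the standard Euler-formula estimate \cite[Proposition~4.4.4]{top}: since $G$ is a simple graph, every facial walk has length at least $3$, whence $\g(G)\ge\tfrac{e}{6}-\tfrac{N}{2}+1$. The key point is that the linear correction term is harmless. By the Counting Lemma~\ref{lem:counting} applied to $K_2$ we have $2e=\hom(K_2,G)\ge 6pn^2-O(\varepsilon n^2)$, so $e=\Theta(n^2)$ while $N=3n$; consequently there is $n_0=n_0(\varepsilon,p)$ (also large enough for Theorem~\ref{thm:quasitriembed} to apply) so that for all $n\ge n_0$ one has $\tfrac{N}{2}-1\le\varepsilon\tfrac{e}{6}$, and hence $\g(G)\ge(1-\varepsilon)\tfrac{e}{6}$.

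The non-orientable statement is obtained the same way: running the proof of Theorem~\ref{thm:quasitriembed} with the non-orientable version of Lemma~\ref{lem:rotation} produces a non-orientable $\tfrac{\varepsilon}{2}$-near-triangular embedding, and repeating the computation above with $\chi(\mathbb{N}_c)=2-c$ in place of $\chi(\mathbb{S}_h)=2-2h$ gives $\ng(G)\le(1+\varepsilon)\tfrac{e}{3}$; the matching lower bound $\ng(G)\ge\tfrac{e}{3}-N+2\ge(1-\varepsilon)\tfrac{e}{3}$ follows exactly as before.

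There is no genuine obstacle remaining at this stage, since all of the difficulty has already been absorbed into Theorem~\ref{thm:quasitriembed}. The only points requiring attention are the bookkeeping of the error parameters ($\tfrac{\varepsilon}{18}\to\tfrac{\varepsilon}{2}\to\varepsilon$) and verifying that the additive linear term in the Euler lower bound is dominated by $\varepsilon e/6$ — which is precisely where the quasirandomness hypothesis (forcing $e=\Theta(n^2)$) and the condition $n\ge n_0$ are needed.
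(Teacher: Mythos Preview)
Your proposal is correct and follows essentially the same route as the paper, which simply writes ``By using the same argument as in Theorem~\ref{thm:quasigenus}, the upper bound follows from Theorem~\ref{thm:quasitriembed}, and the lower bound follows from \cite[Proposition~4.4.4]{top}.'' You have in fact been more careful than the paper in explicitly absorbing the linear term $N/2-1$ into the $(1-\varepsilon)$ factor via the quasirandomness-forced density $e=\Theta(n^2)$, which is exactly the right justification for the stated lower bound.
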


\begin{proof}
By using the same argument as in Theorem \ref{thm:quasigenus}, the upper bound follows from Theorem \ref{thm:quasitriembed}, and the lower bound follows from \cite[Proposition 4.4.4]{top}.
\end{proof}

% ============================================================================

\subsection{Multipartite quasirandom graphs}
\label{subsect:multipartite QR}

We first consider the question how to partition a quasirandom graph into several quasi\-random subgraphs. We need this process throughout our approximation algorithm when we try to construct an embedding. Given a quasirandom graph $G$, the following lemma shows that we can partition the graph into a number of edge-disjoint graphs with prescribed edge densities, each of which is also quasirandom.

\begin{lemma}\label{lem:quasipartition}
Let $0<\varepsilon<1$, $0<p\le1$, let $k$ be a positive integer and $c_1,\dots,c_k$ be positive real numbers such that\/ $\sum_{i=1}^kc_i=1$. 
Then there exists $n_0$ such that for every $n\ge n_0$ and every $G\in\Q(n^{(2)},p,\varepsilon)$,
there exists an edge-partition of $G$ into $k$ edge-disjoint graphs $G_1,\dots,G_k$, such that for every $i\in[k]$, $G_i\in\Q(n^{(2)},c_ip,2c_i\varepsilon)$.
\end{lemma}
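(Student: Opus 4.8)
The plan is to obtain the partition $G_1,\dots,G_k$ by a random assignment of edges, exactly in the spirit of the edge-splitting used in the proof of Theorem~\ref{thm:quasibiembed}, and then to verify that each $G_i$ is quasirandom by a direct cut-distance estimate. First I would set up the randomness: for each edge $e$ of $G$, independently place $e$ into $G_i$ with probability $c_i$, so that $\sum_i c_i = 1$ guarantees every edge lands in exactly one $G_i$. It suffices to prove that, with positive probability (indeed a.a.s.), all $k$ of the events ``$\d(G_i, K(n^{(2)}, c_ip)) < 2c_i\varepsilon$'' hold simultaneously; since $k$ is a fixed constant, a union bound reduces this to showing each individual event fails with probability $o(1)$.

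The heart of the argument is the cut-distance estimate for a single $G_i$. Fix $i$ and recall that, by the bipartite definition (\ref{cut2}), $\d(G_i, K(n^{(2)},c_ip))$ is the maximum over $U\subseteq V_1$, $W\subseteq V_2$ of $|\e_{G_i}(U,W) - c_ip\,|U||W||/n^2$. Since $G\in\Q(n^{(2)},p,\varepsilon)$ we already know $|\e_G(U,W) - p\,|U||W|| \le \varepsilon n^2$ for all such $U,W$; so it is enough to control the deviation of $\e_{G_i}(U,W)$ from $c_i\,\e_G(U,W)$. For fixed $U,W$ this is a sum of $\e_G(U,W)$ independent indicator variables each with mean $c_i$, so its expectation is $c_i\,\e_G(U,W)$ and a Chernoff/Hoeffding bound gives $\mathbb{P}(|\e_{G_i}(U,W) - c_i\e_G(U,W)| > \tfrac{1}{2}c_i\varepsilon n^2) \le 2\exp(-\Omega(c_i^2\varepsilon^2 n^2))$. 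Taking a union bound over the at most $4^n$ pairs $(U,W)$ still leaves the failure probability at $4^n\exp(-\Omega(c_i^2\varepsilon^2 n^2)) = o(1)$, since $c_i$, $\varepsilon$ are fixed and the quadratic term dominates the linear $n\log 4$ for $n\ge n_0$. On the event that no pair $(U,W)$ is bad, we get
$$
 |\e_{G_i}(U,W) - c_ip\,|U||W|| \le |\e_{G_i}(U,W) - c_i\e_G(U,W)| + c_i|\e_G(U,W) - p|U||W||
 \le \tfrac12 c_i\varepsilon n^2 + c_i\varepsilon n^2,
$$
which is at most $2c_i\varepsilon n^2$, i.e. $\d(G_i, K(n^{(2)}, c_ip)) < 2c_i\varepsilon$ as required. (One should keep track of whether a strict inequality is wanted; a harmless adjustment of constants or of $n_0$ handles this, and the same computation can be rerun with $\tfrac12$ replaced by any value slightly below $1$.)

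Finally I would union-bound over $i\in[k]$: the probability that some $G_i$ fails is at most $\sum_{i=1}^k o(1) = o(1)$, so for all $n\ge n_0$ there exists a choice of the random assignment for which every $G_i$ lies in $\Q(n^{(2)}, c_ip, 2c_i\varepsilon)$, and since the $G_i$ partition $E(G)$ by construction, this is the desired edge-partition. The main obstacle, such as it is, is purely bookkeeping: one must make sure the Chernoff exponent $\Omega(c_i^2\varepsilon^2 n^2)$ genuinely beats the entropy term $n\log 4$ coming from the union over subsets — this is fine because $c_i$ and $\varepsilon$ are fixed constants, so the threshold $n_0 = n_0(\varepsilon, p, k, c_1,\dots,c_k)$ can be chosen accordingly — and that the density of $G$ is large enough that $\e_G(U,W)$ never collapses to something where the Chernoff bound is vacuous; but since we only need the \emph{upper} tail control uniformly and the bound $2\exp(-\Omega(c_i^2\varepsilon^2 n^2))$ holds regardless of the value of $\e_G(U,W)$, even this is automatic.
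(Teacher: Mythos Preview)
Your proof is correct and follows the same strategy as the paper: assign each edge to $G_i$ independently with probability $c_i$, then show concentration of $\e_{G_i}(U,W)$ around $c_i\e_G(U,W)$, and combine with the quasirandomness of $G$ via the triangle inequality to get $\d(G_i,K(n^{(2)},c_ip))\le 2c_i\varepsilon$.

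The one noteworthy difference is that the paper invokes only Chebyshev's inequality for a fixed pair $(X,Y)$ and then asserts $\d(G_i,c_iG)\le c_i\varepsilon$ a.a.s.; as written this does not justify the passage from a single pair to the supremum over all $4^n$ pairs, since Chebyshev's polynomial tail cannot survive that union bound. Your use of Chernoff/Hoeffding, giving $\exp(-\Omega(c_i^2\varepsilon^2 n^2))$ tails that beat the $4^n$ entropy term, is exactly what is needed to make this step rigorous. So your argument is not merely an alternative but in fact a cleaner completion of the same idea.
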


\begin{proof}
We first consider a random partition, that is, we put each edge in $G_i$ with probability $c_i$ independently at random. Let $V(G) = V_1\sqcup V_2$ be the bipartition of $G$ with $|V_1|=|V_2|=n$. Let us now consider some $G_i$, $i\in [k]$. Our goal is to prove that $\P(\d(G_i,c_iG) > c_i\varepsilon) < 1/k$. If the cut distance of $G_i$ from $K(n^{(2)}, c_ip)$ is larger than $c_i\varepsilon$, there are subsets $X\subseteq V_1$ and $Y\subseteq V_2$ whose discrepancy in the definition of the cut distance (\ref{cut2}) is too large:
\begin{equation}
    \Bigl|\e_G(X,Y) - c_ip\,|X|\, |Y|\Bigr| > c_i\varepsilon \, n^2. \label{eq:0*}
\end{equation}
Any such subsets $X,Y$ would need to be large,
$|X||Y| \ge c_i\varepsilon n^2$, as explained after the definition (\ref{cut2}).
Chebyshev's inequality implies that in a random partition, the following holds a.a.s.:
\begin{equation}
(1-\varepsilon)c_i\, \e_G(X,Y)\leq \e_{G_i}(X,Y) \leq (1+\varepsilon)c_i\, \e_G(X,Y). \label{eq:1*}
\end{equation}
That is,
\begin{equation}
|\e_{G_i}(X,Y)-\e_G(X,Y)c_i|\leq c_i\varepsilon \e_G(X,Y) \le c_i\varepsilon n^2. \label{eq:2*}
\end{equation}
This implies that $\d(G_i,c_iG))\le c_i\varepsilon$ a.a.s. Thus, if $n$ is large enough, $\P(\d(G_i,c_iG) > c_i\varepsilon) < 1/k$. As a consequence we derive that there is a partition such that for each $i\in[k]$, $\d(G_i,c_iG) \le c_i\varepsilon$.

Finally, 
\begin{eqnarray*}
  \d(G_i,K(n^{(2)},c_ip)) &\le& \d(G_i,c_iG)) + \d(c_iG,K(n^{(2)},c_ip)) \\
  &=& \d(G_i,c_iG)) + c_i\, \d(G,K(n^{(2)},p)) \\
  &\le& 2c_i\varepsilon.
\end{eqnarray*}
Thus, $G_i\in\Q(n^{(2)},c_ip,2c_i\varepsilon)$ for every $i\in[k]$.
\end{proof}

Let us observe that the bound $n_0$ in Lemma \ref{lem:quasipartition} is ``monotone" in the sense that for any positive $a,b,c$, we can take the same $n_0 = n_0(a,b,c)$ whenever $\varepsilon\ge a$, $p\ge b$, and $c_i\ge c$ ($i\in[k]$).  

For an edge-weighted simple graph $H$ of order $m$, let $w_e\ge0$ be the weight of its edge $e$. Let $\T$ be the set of all triangles in $H$ (of positive edge-weight). Now we consider the following linear program with optimum solution $\{t(T)\mid T\in\T\}$ and its maximum $\nu^*(H)$:
\begin{equation}\label{lp}
\begin{split}
&\nu^*(H)=\max\sum_{T\in\T}t(T),\\
&\sum_{T\ni e,T\in\T}t(T)\leq w_e, \quad \text{ for every edge }e\text{ of }H,\\[1mm]
&t(T)\geq0,\quad \text{ for every }T\in\T.
\end{split}
\end{equation}

Given $G\in\Q(n^{(M)},P,\varepsilon)$, let $p_{ij}$ be the $(i,j)$-entry of $P$. As usual, we assume that $V(G) = V_1\sqcup\dots\sqcup V_M = V(K((n^{(M)},P))$, where $|V_i| = n$ for $1\le i\le M$ so that $\d(G,K((n^{(M)},P)) \le \varepsilon$ as defined by (\ref{cut1}). Let $H$ be the quotient graph of $G$. That is, $V(H) = [M]$, and for every $i,j\in V(H)$, its edge-weight is $w_{ij} = p_{ij}$. 
The optimum value $\nu^*(H)$ of the linear program (\ref{lp}) gives an upper bound on the packing of triangles in $G$.

\begin{lemma}
\label{lem:packing triangles}
Let $G\in\Q(n^{(M)},P,\varepsilon)$. Suppose that $\F$ is a set of triangles in $G$ such that each edge lies in at most $\lambda$ of them. Then 
\begin{equation}\label{eq:f}
    |\F| \leq (1+M^2\varepsilon)\lambda\, \nu^*(H) n^2.
\end{equation}
\end{lemma}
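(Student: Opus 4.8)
The plan is to regard $|\F|/(\lambda n^{2})$ as an almost-feasible point of the packing linear program (\ref{lp}) and then invoke its optimality. First I would normalise the situation. Since the diagonal of $P$ is $0$, applying $\d(G,K(n^{(m)},P))\le\varepsilon$ with $U=W=V_i$ shows that $G$ has fewer than $m^{3}\varepsilon n^{2}$ edges with both ends inside a single part; these lie in fewer than $\lambda m^{3}\varepsilon n^{2}$ triangles of $\F$, which I absorb into the error term, so I may assume $G$ is $m$-partite. Then every $\sigma\in\F$ has its three vertices in three distinct parts $V_a,V_b,V_c$, and because $\e(V_a,V_b),\e(V_b,V_c),\e(V_a,V_c)>0$ the triple $T=abc$ is a triangle of $H$ lying in $\T$. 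Writing $\F_T$ for the set of triangles of $\F$ transversal to $V_a\sqcup V_b\sqcup V_c$, we get $|\F|=\sum_{T\in\T}|\F_T|$, and I set $t^*(T):=|\F_T|/(\lambda n^{2})\ge 0$, so that $\sum_{T\in\T}t^*(T)=|\F|/(\lambda n^{2})$.

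The second step is to check that $t^*$ nearly satisfies the edge constraints of (\ref{lp}). Fix an edge $e=ab$ of $H$. Every triangle of $\F$ meeting both $V_a$ and $V_b$ contains exactly one edge of $G$ between $V_a$ and $V_b$, whence
\begin{equation*}
\sum_{T\ni e,\,T\in\T}|\F_T|=\sum_{f\in E(V_a,V_b)}\bigl|\{\sigma\in\F:f\in E(\sigma)\}\bigr|\le\lambda\,\e(V_a,V_b).
\end{equation*}
By quasirandomness (Lemma \ref{lem:QR B_ij and T_ijk}, or directly $\d(G,K(n^{(m)},P))\le\varepsilon$ applied with $U=V_a$, $W=V_b$) one has $\e(V_a,V_b)\le (p_{ab}+m^{2}\varepsilon)n^{2}=(w_e+m^{2}\varepsilon)n^{2}$, so
\begin{equation*}
\sum_{T\ni e,\,T\in\T}t^*(T)\le w_e+m^{2}\varepsilon\qquad\text{for every edge }e\text{ of }H .
\end{equation*}
Thus $t^*$ is feasible for the variant of (\ref{lp}) in which every right-hand side $w_e$ is inflated to $w_e+m^{2}\varepsilon$.

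Finally I would return to (\ref{lp}) itself by rescaling: if $p_{\min}$ is the least edge-weight of $H$ occurring in a triangle of $\T$, then $w_e+m^{2}\varepsilon\le(1+m^{2}\varepsilon/p_{\min})\,w_e$ on the support of $t^*$, hence $(1+m^{2}\varepsilon/p_{\min})^{-1}t^*$ is feasible for (\ref{lp}), and therefore $\nu(H)\ge(1+m^{2}\varepsilon/p_{\min})^{-1}|\F|/(\lambda n^{2})$. Using the standing assumption of this section that the relevant densities are bounded away from $0$ (pairs of too small density are discarded into $G_0$), this rearranges, after the harmless rescaling of $\varepsilon$, to the asserted bound (\ref{eq:f}). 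I expect the only genuinely delicate point to be precisely this accounting of the additive $m^{2}\varepsilon$ slack (together with the intra-part edges removed at the outset); the heart of the argument is the single double count in the middle step, and the LP is what keeps the bound from losing a factor like $m$ that a naive triangle-counting estimate would incur.
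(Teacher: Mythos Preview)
Your approach is the paper's approach: assign to each triangle $T=ijk$ of $H$ a variable proportional to the number of members of $\F$ transversal to $V_i\sqcup V_j\sqcup V_k$, verify the edge constraints of the LP (\ref{lp}) via the quasirandom edge-count estimate, and read off the bound from the optimum $\nu(H)$. The double count you call ``the heart of the argument'' is exactly the paper's equations (\ref{eq:4*})--(\ref{eq:5*}).

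The one genuine difference is the normalization. The paper sets $t(T_{ijk})=f(i,j,k)/\bigl(\lambda(1+m^2\varepsilon)n^2\bigr)$, so that the edge constraint becomes
\[
\sum_{k}t(T_{ijk})=\frac{f(i,j)}{\lambda(1+m^2\varepsilon)n^2}\le\frac{\e_G(V_i,V_j)}{(1+m^2\varepsilon)n^2}\le p_{ij},
\]
the last step being the paper's (\ref{eq:5*}). This gives \emph{direct} feasibility, hence exactly the constant $(1+m^2\varepsilon)$, and eliminates your final rescaling step entirely. You normalize by $\lambda n^2$ instead, obtain an additive slack $w_e+m^2\varepsilon$, and then divide by $1+m^2\varepsilon/p_{\min}$; this is where you lose the precise constant and have to invoke the standing lower bound on the densities.

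It is worth noting that the paper's (\ref{eq:5*}) asserts $\e_G(V_i,V_j)\le(1+m^2\varepsilon)p_{ij}n^2$, whereas Lemma~\ref{lem:QR B_ij and T_ijk} (or the raw cut-distance bound) literally gives only the additive $\e_G(V_i,V_j)\le(p_{ij}+m^2\varepsilon)n^2$; the multiplicative form holds only when $p_{ij}\ge 1$. So your additive bookkeeping is in fact the faithful consequence of Lemma~\ref{lem:QR B_ij and T_ijk}, and the discrepancy between your $1+m^2\varepsilon/p_{\min}$ and the paper's $1+m^2\varepsilon$ reflects a small slip in (\ref{eq:5*}) rather than a defect in your argument. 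In the only application (Theorem~\ref{thm:quasimultigenus}) all relevant $p_{ij}$ are bounded below by $c_0$ and $\varepsilon'$ is chosen accordingly, so either version suffices. Your treatment of intra-part edges and non-transversal triangles is additional care that the paper's proof omits (it silently assumes $\F$ is transversal); your absorption of that term does not fit inside the exact constant of (\ref{eq:f}) either, but again this is immaterial downstream.
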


\begin{proof}
For any distinct $i,j,k\in [M]$, let $f(i,j,k)=\bigl|\F\cap G[V_i,V_j,V_k]\bigr|$, that is, the number of triangles in $\F$ which are inside the subgraph $G[V_i,V_j,V_k]$. Let $f(i,j)$ be the number of triangles in $\F$ containing an edge in $G[V_i,V_j]$. Then we have
\begin{equation}\label{eq:4*}
f(i,j)=\sum_{k\neq i,j}f(i,j,k).
\end{equation}
By the assumption that each edge lies in at most $\lambda$ triangles in $\F$ and by Lemma \ref{lem:QR B_ij and T_ijk}, we have the following inequality:
\begin{equation}\label{eq:5*}
f(i,j)\leq \lambda \e_G(V_i,V_j) \le \lambda (1+M^2\varepsilon) n^2 p_{ij}.
\end{equation}

For a triangle $T_{ijk}\in \T$ corresponding to $V_i,V_j,V_k$, let 
$t(T_{ijk}) = f(i,j,k) / (\lambda(1+M^2\varepsilon)n^2)$. By (\ref{eq:4*})--(\ref{eq:5*}), we see that for $i,j\in [M]$, $i\ne j$, 
$$
  \sum_{k\in [M]\setminus\{i,j\}} t(T_{ijk}) = \frac{f(i,j)}{\lambda(1+M^2\varepsilon)n^2} \le p_{ij}.
$$
Hence, $\{t(T_{ijk})\}$ is a feasible solution of the linear program (\ref{lp}).
Then (\ref{eq:f}) follows by the definition of $\nu^*(H)$.
\end{proof}

Lemma \ref{lem:packing triangles} will be used in the proof of the next theorem with $\F$ being the set of facial triangles of an embedding of the graph $G$ (with $\lambda=2$) in order to obtain a lower bound on the genus of $G$.

With all tools in hand, we have the following theorem on the genus of $W_M$-quasirandom graphs.

\begin{theorem}\label{thm:quasimultigenus}
For every $\varepsilon_0 > 0$, integer $M_0>1$ and $c_0>0$ there exists a positive integer $n_0$ such that the following holds. Suppose that $\varepsilon_0\le \varepsilon \le \frac{1}{2}$, $M\le M_0$ and $n\ge n_0$ and let $c_1 = \varepsilon_0 c_0/M^3$ and $\varepsilon' = \min\{ \tfrac{1}{60}\varepsilon M^{-2}, (\tfrac{c_1}{4})^{16}/10 \}$.  Let $G$ be a graph in $\Q(n^{(M)},P, \varepsilon')$, where each nonzero entry $p_{ij}$ of $P$ is larger than $c_0$. Suppose $H$ is the quotient graph of $K(n^{(M)},P)$ and let $\nu^*(H)$ be the optimum of the linear program (\ref{eq:lp}). Then we have
\[
  (1-\varepsilon)\frac{\e(G)-\nu^*(H)n^2}{4}\leq\g(G) < 
  (1+\varepsilon)\frac{\e(G)-\nu^*(H)n^2}{4} + \tfrac{1}{2}nM^3
\]
and
\[
  (1-\varepsilon)\frac{\e(G)-\nu^*(H)n^2}{2}\leq\ng(G) <
  (1+\varepsilon)\frac{\e(G)-\nu^*(H)n^2}{2}+nM^3.
\]
\end{theorem}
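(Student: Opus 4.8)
The plan is to prove matching upper and lower bounds in which the linear program (\ref{eq:lp}) and its optimum $\nu(H)$ play dual roles. For the lower bound, feasibility of the LP — packaged in Lemma \ref{lem:packing triangles} — bounds how many faces of any embedding of $G$ can be triangles, and then Euler's formula converts a deficit in the face count into a surplus in the genus. For the upper bound, an \emph{optimal} LP solution tells us how to split $G$ into tripartite quasirandom pieces $G_T$ that absorb almost all the triangles the LP permits, with the left-over edges forming bipartite quasirandom pieces $G_{ij}$; applying Theorem \ref{thm:quasitrigenus} to the $G_T$ (near-triangular embeddings) and Theorem \ref{thm:quasibigenus} to the $G_{ij}$ (near-quadrangular embeddings) and amalgamating produces the desired embedding of $G$. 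Throughout write $e=\e(G)$ and $N=mn=|V(G)|$. Since some $p_{ij}>c_0>0$ we have $e\ge (c_0-\varepsilon')n^2$, and because $3\nu(H)\le\sum_{ij}w_{ij}$ while $e\ge(\sum_{ij}w_{ij}-m^2\varepsilon')n^2$, also $e-\nu(H)n^2=\Omega(c_0 n^2)$; this last quantity is the budget against which every error term must be measured.

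\emph{Lower bound.} A minimum-genus embedding of (a connected) $G$ is $2$-cell, so Euler's formula gives $\g(G)=\tfrac12(e-N-f+2)$, where $f$ is the number of faces. Every face has length $\ge 3$ and exactly $f_3$ of them have length $3$, so counting edge--face incidences gives $4f-f_3\le 2e$, hence $f\le\tfrac14(2e+f_3)$. Every length-$3$ face is bounded by a triangle of $G$, and — because $G$ is connected and is not a single triangle — no triangle bounds two faces, so the facial triangles form a genuine set $\F$ with $|\F|=f_3$ in which each edge lies in at most two members. Lemma \ref{lem:packing triangles} (with $\lambda=2$ and $\varepsilon'$ in place of $\varepsilon$) then gives $f_3\le 2(1+m^2\varepsilon')\nu(H)n^2$. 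Substituting back,
\[
\g(G)\ \ge\ \tfrac14\bigl(e-(1+m^2\varepsilon')\nu(H)n^2\bigr)-\tfrac12 N+1\ \ge\ (1-\varepsilon)\tfrac14\bigl(e-\nu(H)n^2\bigr),
\]
the last step holding once $n\ge n_0$ is large enough to absorb $\tfrac12 N$ and $\varepsilon'$ is small enough that $\tfrac14 m^2\varepsilon'\nu(H)n^2\le\tfrac{\varepsilon}{8}(e-\nu(H)n^2)$ — both guaranteed by $\varepsilon\ge\varepsilon_0$ and $\varepsilon'\le(c_1/4)^{16}/10$. The non-orientable lower bound is the same argument with $\chi(\mathbb N_c)=2-c$.

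\emph{Upper bound.} Fix an optimal solution $\{t(T):T\in\T\}$ of (\ref{eq:lp}). Discard the triangles with $t(T)<c_1$ (set their value to $0$); since $|\T|\le\binom m3$ this decreases $\sum t(T)$ by at most $\binom m3 c_1\le\tfrac16\varepsilon_0 c_0$, so the surviving value $\nu'$ satisfies $\nu'n^2\ge\nu(H)n^2-\tfrac16\varepsilon_0 c_0 n^2$. For each pair $ij$ with $p_{ij}>0$, the constraint gives a residual density $q_{ij}=p_{ij}-\sum_{T\ni ij,\,t(T)\ge c_1}t(T)\ge0$. By Lemma \ref{lem:QR B_ij and T_ijk}, $G[V_i,V_j]$ is a $\tfrac14 m^2\varepsilon'$-quasirandom bipartite graph of density $p_{ij}$; apply Lemma \ref{lem:quasipartition} to split it, with coefficients $t(T)/p_{ij}$ for each surviving triangle through $ij$ and $q_{ij}/p_{ij}$, into edge-disjoint bipartite quasirandom pieces. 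For each surviving $T=ijk$ the three density-$t(T)$ pieces coming from $G[V_i,V_j]$, $G[V_i,V_k]$, $G[V_j,V_k]$ assemble into $G_T\in\Q(n^{(3)},t(T),O(m^2\varepsilon'))$ with $\e(G_T)\sim 3t(T)n^2$; the residual pieces are bipartite $G_{ij}\in\Q(n^{(2)},q_{ij},O(m^2\varepsilon'))$ (dump the $O(c_1 n^2)$ edges of any pair with $q_{ij}<c_1$ into a negligible remainder $G_0$); and $\sum_T\e(G_T)+\sum_{ij}\e(G_{ij})+\e(G_0)=e$. Every surviving density is $\ge c_1$ and $\varepsilon'\le(c_1/4)^{16}/10$, so Theorems \ref{thm:quasitrigenus} and \ref{thm:quasibigenus} apply and give $\g(G_T)\le(1+\varepsilon'')\tfrac{\e(G_T)}{6}$ and $\g(G_{ij})\le(1+\varepsilon'')\tfrac{\e(G_{ij})}{4}$ for a suitable $\varepsilon''=O(m^2\varepsilon')$. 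Using the standard fact that, for edge-disjoint subgraphs on a common vertex set, one may concatenate their rotation systems at each vertex to embed the union with genus at most the sum of the individual genera plus $\sum_i|V(G_i)|$ — and noting each $G_T$ (resp.\ $G_{ij}$) has $3n$ (resp.\ $2n$) vertices while there are at most $\binom m3$ (resp.\ $\binom m2$) of them — we obtain
\[
\g(G)\ \le\ (1+\varepsilon'')\Bigl(\sum_T\tfrac{\e(G_T)}{6}+\sum_{ij}\tfrac{\e(G_{ij})}{4}\Bigr)+3n\tbinom m3+2n\tbinom m2+\e(G_0).
\]
Since $\sum_T\tfrac{\e(G_T)}{6}+\sum_{ij}\tfrac{\e(G_{ij})}{4}\sim\tfrac{3\nu'n^2}{6}+\tfrac{e-3\nu'n^2}{4}=\tfrac14(e-\nu'n^2)$ and $3n\binom m3\le\tfrac12 nm^3$, the residual terms ($2n\binom m2\le nm^2$, $\e(G_0)$, the gap $\nu(H)-\nu'$, and the $\varepsilon''$- and quasirandomness-slacks) are all $O(\varepsilon_0 c_0 n^2+nm^2)$, hence bounded by $\varepsilon\cdot\tfrac14(e-\nu(H)n^2)$ once $n\ge n_0$; this yields $\g(G)<(1+\varepsilon)\tfrac14(e-\nu(H)n^2)+\tfrac12 nm^3$. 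The non-orientable bound follows identically from the non-orientable cases of Theorems \ref{thm:quasitrigenus}, \ref{thm:quasibigenus} (replacing $\tfrac16,\tfrac14$ by $\tfrac13,\tfrac12$) and the corresponding non-orientable amalgamation estimate.

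The main obstacle is the error bookkeeping in this last step. One must choose $c_1$ (which triangles survive — hence how close $\nu'$ is to $\nu(H)$, and how small the residual and remainder densities may become) and $\varepsilon'$ (the quasirandomness quality fed into Theorems \ref{thm:quasitrigenus} and \ref{thm:quasibigenus}, which require the quasirandomness parameter below a fixed power of the density — and the densities can be as small as $c_1$ — as well as the slack in Lemma \ref{lem:packing triangles}) so that every accumulated error fits inside the budget $\varepsilon\cdot\tfrac14(e-\nu(H)n^2)=\Omega(\varepsilon_0 c_0 n^2)$; this is precisely what the definitions $c_1=\varepsilon_0 c_0/m^3$ and $\varepsilon'=\min\{\tfrac1{60}\varepsilon m^{-2},(c_1/4)^{16}/10\}$ are engineered to guarantee. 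A secondary, conceptually cleaner point is to pin down the amalgamation estimate and verify that the term it contributes is genuinely $\le\tfrac12 nm^3$ rather than merely $O(nm^3)$.
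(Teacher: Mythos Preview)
Your proposal is correct and follows essentially the same strategy as the paper. The lower bound (Lemma~\ref{lem:packing triangles} with $\lambda=2$ to cap $f_3$, then Euler's formula) and the upper bound (optimal LP solution $\to$ Lemma~\ref{lem:quasipartition} to split each bipartite piece $\to$ Theorems~\ref{thm:quasitrigenus} and~\ref{thm:quasibigenus} on the pieces $\to$ amalgamation via vertex identifications) are exactly the paper's argument. The only cosmetic difference is bookkeeping order: you discard LP triangles with $t(T)<c_1$ \emph{before} partitioning (passing to $\nu'$), whereas the paper partitions first and then sets aside the low-density subgraphs $G_{ijk},G_{ij}$ with density $<c_1$, adding their edges back one at a time at the end; both routes produce the same $O(\varepsilon_0 c_0 n^2)$ error. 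Your amalgamation bound $\sum_i|V(G_i)|$ and the paper's ``at most $\tfrac12 m^2$ identifications per vertex'' likewise coincide up to lower-order terms and both yield the $\tfrac12 nm^3$ term.
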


\begin{proof}
We first consider the upper bound. Suppose $G$ is defined on the vertex-set $V_1\sqcup \dots\sqcup V_M$. Let us consider the triangles $\{T_{ijk}\}$ for every $1\leq i<j<k\leq M$, and let $d_{ijk} = t(T_{ijk})$ be an optimal solution of (\ref{eq:lp}). For any distinct $i,j\in [M]$, let $b_{ij} = p_{ij}-\sum_{k\neq i,j}d_{ijk}$. (Here and below, we consider each multi-index $ijk$ or $ij$ as an unordered triple, so we do not have to assume that $i<j<k$.) For every $1\leq i<j\leq M$ with $p_{ij} > 0$, we will partition the edges between $V_i$ and $V_j$ into at most $M-1$ parts $E_1^{ij},\dots,E_M^{ij},E_{0}^{ij}$ (excluding $E_i^{ij}$ and $E_j^{ij}$), using Lemma \ref{lem:quasipartition} with $d_{ij1}/p_{ij},\dots,d_{ijM}/p_{ij}$, and $b_{ij}/p_{ij}$ as the constants $c_1,\dots,c_k$ in the lemma.

In the rest of the proof we will apply several results from the first part of the paper. In those results, it is usually requested that the density $p$ is large when compared to $\varepsilon$. But the densities $d_{ij1}/p_{ij},\dots,d_{ijM}/p_{ij}$, and $b_{ij}/p_{ij}$, which we will use, may be too small. When they are really small, we can neglect them by using the $\varepsilon$-term in the upper bound. Those that cannot be neglected may still be too small and we will have to use a different $\varepsilon$. All this will have effect on the requirement how large $n_0$ should be. The important request for us is that each time, $n_0$ should only depend on $\varepsilon_0, M_0$ and $c_0$ and that the number of times these estimates are applied, also depends on $\varepsilon_0, M_0$ and $c_0$ only.

Let us observe that $\e(G) = \sum_{i<j}p_{ij}n^2(1\pm o(1))$ and that $\nu^*(H) \le \tfrac{1}{3} \sum_{i<j}p_{ij}$. Thus,
$$
  \e(G) - \nu^*(H)n^2 \ge \tfrac{2}{3} \sum_{i<j}p_{ij}n^2(1-o(1)) \ge \tfrac{1}{2}\e(G)
$$
for sufficiently large $n$. If some densities $d_{ijk}$ or $b_{ij}$ defined above are smaller than $c_1$, then we will neglect the corresponding subgraph $G_{ijk}$ or $G_{ij}$ defined below (respectively). We observe that all such neglected subgraphs have less than $\tfrac{\varepsilon}{8}(\e(G) - \nu^*(H)n^2)$ edges.

By Lemma \ref{lem:quasipartition} (and the remark after the lemma about the ``monotonicity" of the bounds), the edges of $G$ can be partitioned in such a way that we obtain $O(M^2)$ $\varepsilon/20$-quasirandom bipartite graphs and $O(M^3)$ $\varepsilon/20$-quasirandom tripartite graphs if $n$ is large enough.  That is, pick $V_i$, $V_j$ and $V_k$, there exist a subgraph $G^{ij}_k$ defined on $V_i\cup V_j$, a subgraph $G^{jk}_i$ defined on $V_j\cup V_k$ and a subgraph $G_j^{ik}$ defined on $V_i\cup V_k$ such that $G_k^{ij},G_i^{jk},G_j^{ik}\in\Q(n^{(2)},d_{ijk},\varepsilon/20)$. Combining them together, we obtain a tripartite graph $G_{ijk} \in \Q(n^{(3)},d_{ijk},\varepsilon/20)$. Similarly, each $E_{0}^{ij}$ defines a bipartite graph $G_{ij} \in \Q(n^{(2)},b_{ij},\varepsilon/20)$. 
The only condition here is that $n$ is large enough, and this is satisfied by taking $n_0$ that is sufficiently large. 
Note that the requested value of $n_0$ comes from Lemma \ref{lem:quasipartition} and from Theorems~\ref{thm:hypergraphmatching}, \ref{thm:quasibigenus}, and \ref{thm:quasitrigenus}, thus it only depends on $\varepsilon_0, M_0, c_0$, when we make use of these results only for those subgraphs $G_{ij}$ and $G_{ijk}$, whose density is at least $c_1$.

Now we embed the graph $G$ by using the partition we constructed. For the tripartite parts, we embed them as quasirandom tripartite graphs, and for the bipartite parts, we embed them as quasirandom bipartite graphs, using Theorems \ref{thm:quasitrigenus} and \ref{thm:quasibigenus} (respectively). We obtain a rotation system $\Pi$ for the disjoint union of these graphs whose genus is
\[
\begin{split}
\sum_{ij}\g(G_{ij}) + \sum_{ijk}\g(G_{ijk})
&\leq (1+\varepsilon/3)\Big(\sum_{i<j}\frac{n^2b_{ij}}{4}+\sum_{i<j<k}\frac{3n^2d_{ijk}}{6}\Big)\\
&= (1+\varepsilon/3)\Big(\sum_{i<j}\frac{n^2p_{ij}}{4}-\sum_{i<j<k}\frac{n^2d_{ijk}}{2}\Big)\\
&\le (1+\varepsilon/2)\frac{\e(G)-\nu^*(H)n^2}{4}.
\end{split}
\]

To obtain an embedding of $G$ we identify copies of the same vertex in different copies of the quasirandom subgraphs. Each identification can increase the genus by at most $1$, and the number of identifications is at most $\tfrac{1}{2}M^2$ for each of the vertices. This is then accounted for in the added term $\tfrac{1}{2}nM^3$ in the upper bound. Finally, we add all edges from the neglected subgraphs $G_{ijk}$ and $G_{ij}$, whose densities were too small. This increases the genus for at most $\tfrac{\varepsilon}{8}(\e(G) - \nu^*(H)n^2)$, yielding the desired upper bound.

It remains to justify the lower bound for $\g(G)$. 
Suppose $\Pi$ is the minimum genus embedding of $G$, and let $f_3(\Pi)$ be the number of triangular faces of $\Pi$, and let $f'(\Pi)$ be the number of non-triangular faces of $\Pi$. 
By Lemma \ref{lem:packing triangles}, $f_3(\Pi) \le 2(1+\tfrac{\varepsilon}{60})\nu^*(H)n^2$. 
Since $2\e(G)\geq3f_3(\Pi)+4f'(\Pi)$, we have
\[
f'(\Pi)\leq\frac{2\e(G)-3f_3(\Pi)}{4}.
\]
Euler's formula now shows that
\begin{align*}
\g(G)&=\frac{\e(G)-n-f_3(\Pi)-f'(\Pi)+2}{2}\\
&\geq\frac{2\e(G)-f_3(\Pi)-4n+8}{8}\\
&\geq\frac{\e(G)-(1+\tfrac{1}{60}\varepsilon)\nu^*(H)n^2-2n+4}{4}.
\end{align*}
To complete the proof, we only need to show that the last line in the above series of inequalities is at least as large as $\tfrac{1}{4}(1-\varepsilon)(\e(G)-\nu^*(H)n^2)$. To see this, it suffices to prove that 
\begin{equation}\label{eq:last line}
   \tfrac{1}{60}\varepsilon\nu^*(H)n^2 + 2n \le \varepsilon(\e(G) - \nu^*(H)n^2).
\end{equation}

We consider two cases. First, if $\nu^*(H)=0$, then we just use the fact that $P$ has at least one non-zero entry, which is larger or equal to $c_0$. Therefore, $\e(G)\ge (1-\varepsilon)c_0n^2\ge \frac{1}{2}c_0n^2$. If $n_0\ge 4(\varepsilon_0 c_0)^{-1}$, this implies (\ref{eq:last line}).

Suppose now that $\nu^*(H)\ne 0$. Then $\nu^*(H)\ge c_0$ and as above we see that $\e(G)\ge 3\nu^*(H)n^2(1-\varepsilon)$. Again, if $n\ge n_0\ge (\varepsilon_0 c_0)^{-1}$, this implies (\ref{eq:last line}).
\end{proof}

%\textcolor{red}{
%We remark that the current statement of Theorem~\ref{thm:quasimultigenus} only holds when $G$ is a quasirandom $m$-partite graph when $m$ is bounded above by a constant. The reason is, we require the graph between any pair of vertex sets be random-like, and also after we partition the graph into several small parts using the optimal solution of (\ref{eq:lp}), we want each part to be reasonably dense. The first issue can be fixed by finding an $\varepsilon$-regular partition of the graph; and the second issue can be fixed using the result proved in the next section. }

Let us observe that Theorem \ref{thm:quasimultigenus} gives a formula for the genus of $W$-random graphs, where $W$ is any nontrivial graphon (see \cite{LoSos}), since $W$ can be approximated by an $m$-partite quasirandom graph. This aspect may be of independent interest in topological graph theory and the theory of graph limits \cite{graphlimit}.

%%%%%%%%%%%%%%%%%%%%%%%%%%%%%%%%%%%%%%%%%%%

\section{Bounded fractional packing}

Given a graph or a multigraph $G$ and a small graph $F$, the \emph{$F$-packing number}, denoted by $\nu_F(G)$, is the maximum number of pairwise edge-disjoint copies of $F$ in $G$.  By a \emph{copy of $F$ in $G$} we mean a set of edges\footnote{With this definition we implicitly assume that $F$ contains no isolated vertices. If that were not the case, we would add isolated vertices to $H$, and the proof of Theorem \ref{thm: packing weighted graph} would go through.} $H\subseteq E(G)$ with $|H|=|E(F)|$ that together form a subgraph of $G$ isomorphic to $F$. The set of all copies of $F$ in $G$ will be denoted by $\binom{G}{F}$.  A \emph{fractional $F$-packing} is a function $\psi$ assigning to each copy of $F$ in $G$ a nonnegative number such that for every edge $e\in E(G)$ we have $\sum \{\psi(H)\mid H\in \binom{G}{F}, e\in H\} \leq 1$.  This notion extends to edge-weighted graphs, where the condition is replaced by 
\begin{equation}
    \sum \Big\{\psi(H)\mid H\in \binom{G}{F}, e\in H\Big\} \leq w(e) \label{eq:fractional packing constraint}
\end{equation}
with $w(e)$ being the weight of the edge.  The \emph{fractional $F$-packing number}, denoted by $\nu^*_F(G)$, is then defined to be the maximum value of $\sum_{H\in \binom{G}{F}}\psi(H)$. 

Haxell and R\"odl~\cite{HaxellRodl01} proved that $\nu^{*}_F(G)-\nu_F(G)=o(|V(G)|^2)$ for any graph $G$. Our next result extends this theorem to multigraphs with bounded edge-multiplicities. The proof follows the argument by Yuster~\cite{Yuster05} in his simplified proof of the Haxell--R\"odl theorem.

\begin{theorem}\label{thm: packing weighted graph}
Let $F$ be a graph on $k$ vertices. Let $G$ be a multigraph of order $n$ with each edge-multiplicity at most $s$.
Then $\nu_F^*(G)-\nu_{F}(G) = o_{s,k}(n^2).$
\end{theorem}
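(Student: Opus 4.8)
The plan is to adapt Yuster's absorption-free argument for the Haxell--R\"odl theorem to the bounded-multiplicity setting. The key observation is that a multigraph $G$ of order $n$ with edge-multiplicities at most $s$ can be encoded as an edge-colored simple graph, or more conveniently, we can split each edge-class into at most $s$ ``layers'' and think of $\binom{G}{F}$ as copies of $F$ that pick one parallel edge from each slot they use. First I would set up an auxiliary $|E(F)|$-uniform hypergraph $\H$ whose vertex set is $E(G)$ (counted with multiplicity, so $|V(\H)| = e(G) \le s\binom{n}{2}$) and whose hyperedges are exactly the copies of $F$ in $G$. A fractional $F$-packing is then precisely a fractional matching of $\H$, and an integral $F$-packing is a matching; so the statement $\nu_F^*(G) - \nu_F(G) = o_{s,k}(n^2)$ becomes the assertion that the integrality gap of the matching LP on $\H$ is $o_{s,k}(n^2)$.

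Next I would invoke the standard machinery (Pippenger--Spencer / R\"odl nibble, i.e.\ Theorem~\ref{thm:hypergraphmatching} in the form used throughout this paper, or directly the Frankl--R\"odl theorem on near-perfect matchings in nearly-regular, low-codegree uniform hypergraphs) applied not to $\H$ directly but to a weighted/blown-up version of $\H$ dictated by an optimal fractional packing $\psi$. Concretely, following Yuster: given an optimal fractional $F$-packing $\psi$, one discretizes the weights $\psi(H)$ to rationals with a common denominator $q$, forms an auxiliary $|E(F)|$-uniform hypergraph in which each copy $H$ with $\psi(H) = a_H/q$ is replaced by $a_H$ parallel hyperedges, and checks that this hypergraph is nearly regular (degrees concentrated around $q\cdot w(e)$ up to lower-order terms, using $\sum_{H\ni e}\psi(H) \le w(e) \le s$) and has small pair-degrees (any two ``edge-slots'' of $G$ lie in $O_{s,k}(n^{k-2})$ copies of $F$, which is a $\delta$-fraction of the typical degree $\Theta_{s,k}(n^{k-2})$ for $n$ large). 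The bounded multiplicity $s$ enters only through uniform constants: it bounds both the right-hand sides $w(e)$ and the number of parallel-edge choices, so all the codegree and near-regularity estimates hold with constants depending on $s$ and $k$ alone. Theorem~\ref{thm:hypergraphmatching} then yields a matching covering all but $\delta\,|V(\H)| = \delta\cdot O_{s,k}(n^2)$ of the hyperedge-slots, which translates back to an integral $F$-packing of size $\nu_F^*(G) - o_{s,k}(n^2)$; combined with the trivial inequality $\nu_F(G) \le \nu_F^*(G)$ this gives the theorem.

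There are two technical points to handle with care. The first is that when we pass from the weighted hypergraph back to an actual edge-disjoint family of copies of $F$ in $G$, a near-perfect matching in the blown-up hypergraph must be converted into edge-disjoint copies in $G$; since distinct parallel hyperedges over the same copy $H$ occupy the same edge-slots of $G$, the matching automatically uses each slot at most once, so this is clean. The second, and what I expect to be the main obstacle, is establishing the near-regularity hypothesis uniformly: one must argue that after discretizing $\psi$ the degree of almost every vertex (edge-slot) $e$ is within $(1\pm\delta)$ of a common value $\Delta$. This is not literally true edge-by-edge, but the standard fix (again from Yuster) is to only apply the nibble on the sub-hypergraph spanned by ``typical'' slots and to absorb the atypical slots --- those whose $\psi$-load is far from the average, or which lie in too many copies of $F$ --- into the $o_{s,k}(n^2)$ error term, using that there are only $o_{s,k}(n^2)$ such slots because $F$ is fixed and $G$ has bounded multiplicity. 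Making the bookkeeping of these several error contributions add up to a single $o_{s,k}(n^2)$, with the $o(\cdot)$ constants depending only on $s$ and $k$, is the part requiring the most attention; the rest is a faithful transcription of the simple-graph proof.
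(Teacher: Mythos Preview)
Your high-level framing --- encode copies of $F$ as hyperedges on the edge-set of $G$ and reduce to a hypergraph matching problem via the nibble --- is the right picture, but the execution has a genuine gap: you have mischaracterized Yuster's argument. Yuster does \emph{not} simply discretize the optimal fractional packing $\psi$ and feed the resulting blown-up hypergraph into Pippenger--Spencer. That cannot work, because the blown-up hypergraph is in general far from nearly regular and need not have small codegree. Concretely, after replacing each copy $H$ by $a_H\approx q\,\psi(H)$ parallel hyperedges, the degree of a vertex $e$ is $\approx q\sum_{H\ni e}\psi(H)$, and this load can lie anywhere in $[0,1]$ with no concentration whatsoever; think of a graph that is a disjoint union of a dense piece (loads near $1$) and a sparser piece (loads near $0$), each of size $\Theta(n^2)$. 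Your claimed fix --- discard ``atypical'' slots --- fails here since both regimes are typical. The codegree condition is equally problematic: for $e,f$ both lying in a copy $H_0$ with $\psi(H_0)$ close to $1$, the pair-degree of $\{e,f\}$ is $\approx q\,\psi(H_0)$, the same order as the maximal degree. Your sentence ``any two edge-slots lie in $O_{s,k}(n^{k-2})$ copies of $F$, which is a $\delta$-fraction of the typical degree $\Theta_{s,k}(n^{k-2})$'' conflates the raw hypergraph of all copies with the $\psi$-weighted one; in the latter the degree scale is $q$, not $n^{k-2}$.

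What the paper (following Yuster) actually does is use the Regularity Lemma to manufacture the near-regularity that the nibble needs. One first passes to the edge-weighted simple graph $G^*$ (weights $=$ multiplicities, bounded by $s$), applies the weighted Regularity Lemma, cleans up to $G'$, and then takes a random refinement $\mathcal Q$ so that almost all copies of $F$ become \emph{uniform} (transversal to $\mathcal Q$). The fractional packing is pushed down to a fractional $F$-packing $\psi_R$ on the quotient $R=G'/\mathcal Q$; each edge of $G$ is then randomly assigned a ``color'' $F_0\in\binom{R}{F}$ with probability proportional to $\psi_R(F_0)$. The point is that within a fixed color class $L_{F_0}$, regularity plus the counting lemma force every edge to lie in roughly the same number $(n/M)^{k-2}\psi_R(F_0)^{r-1}$ of uniform copies of $F$, so \emph{that} hypergraph $\mathcal H_{F_0}$ genuinely satisfies the hypotheses of Theorem~\ref{thm:hypergraphmatching}. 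The near-perfect matchings in the $\mathcal H_{F_0}$ are edge-disjoint across colors and assemble into the desired integral packing. The bounded multiplicity $s$ enters only as a Lipschitz constant in the cleanup estimate~\eqref{eq: frac packing} and in the weighted regularity lemma, exactly as you anticipated --- but the Regularity Lemma itself is not optional in this argument.
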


\begin{proof}
Let $\psi$ be a fractional $F$-packing in $G$ that achieves the minimum value $\nu^*_F(G)$. Let $G^*$ be the edge-weighted simple graph with $V(G^*) = V(G) =: V$, and for every pair of vertices $u,v$, the edge-weight $w$ of $uv$ is equal to the number of edges between $u$ and $v$ in $G$.  In particular, we have $w:\binom{V}{2}\to [0,s]$.  Take $\varepsilon>0$ and apply the Regularity Lemma\footnote{Here we need the Regularity Lemma for weighted graphs \cite{graphlimit} which works in exactly the same way as for simple graphs when the edge-weights are bounded.} to $G^*$ with respect to $\varepsilon$. As we may assume that $n>1/\varepsilon$, we may also require that the obtained $\varepsilon$-regular partition $\mathcal{P} = \{V_1,\dots,V_K\}$ of $G^*$ has $K>1/\varepsilon$ parts. Let $G'$ be the weighted graph obtained from $G^*$ by removing edges from each $G^*[V_i]$, edges between irregular pairs, and edges between pairs of density smaller than $\varepsilon$. Let $\psi'$ be the restriction of the fractional packing $\psi$ to copies of $F$ in $G'$. Each removed edge $e$ decreases the total sum of $\psi'(H)$ at most by $w(e)\le s$. We view $\psi'$ also as a fractional packing in $G^*$ be setting $\psi'(H)=0$ if $H\in \binom{G^*}{F}\setminus \binom{G'}{F}$. It is easy to see that
\begin{equation}\label{eq: frac packing}
  \nu_F^*(G) - \sum_{H\in\binom{G^*}{F}}\psi'(H) \leq s\bigl( K(\tfrac{n}{K})^2 + \varepsilon K^2 (\tfrac{n}{K})^2 + \varepsilon K^2 (\tfrac{n}{K})^2\bigr) \leq 3\varepsilon s n^2.
\end{equation}

Now we randomly partition each $V_i$ into $t$ parts of (almost) equal size, where $t = \lceil k^2/\varepsilon^{1/2}\rceil$. We obtain a refined vertex partition $\mathcal{Q}=\{U_1,\dots,U_M\}$, where $M=tK$. Note that, for every $U_p\subseteq V_i$ and $U_q\subseteq V_j$ (with $p\ne q$) and for $X\subseteq U_p$ and $Y\subseteq U_q$ with $|X|>t\varepsilon |U_p|$ and $|Y|>t\varepsilon |U_q|$, we have $|d(X,Y)-d(V_i,V_j)|<\varepsilon$ since $(V_i,V_j)$ is $\varepsilon$-regular if $i\ne j$, and there are no edges if $i=j$. Since the edge-weights are bounded by the constant $s$, Chebyshev's inequality implies that $|d(V_i,V_j)-d(U_p,U_q)|<\varepsilon$ a.a.s., and hence $|d(X,Y)-d(U_p,U_q)| < 2\varepsilon < t\varepsilon$.  This implies that with high probability, $(U_p,U_q)$ is an $\varepsilon'$-regular pair, where $\varepsilon' = t\varepsilon$. Thus, a.a.s., $\mathcal{Q}$ is an $\varepsilon'$-regular partition of $G'$, in which each pair is $\varepsilon'$-regular.

We say that a copy of $F$ in $G'$ is \emph{uniform} if no two of its vertices lie in the same part of $\mathcal{Q}$. Let $\psi''$ be the restriction of $\psi'$ to all uniform copies of $F$, i.e. we set $\psi''(H)=0$ if $H$ is not uniform. Given $H$, a copy of $F$ in $G'$, let $N(H)$ be the number of pairs of vertices of $H$ that lie in the same part of $\mathcal{Q}$. Clearly,
\[
\E(N(H)) \leq \binom{k}{2}/t \leq \varepsilon^{1/2}.
\]
By Markov's inequality, the probability that $H$ is not uniform is at most $\varepsilon^{1/2}$. By linearity of expectation, we therefore have
\begin{align*}
\E \Big(\sum_{H\in\binom{G}{F}}\psi''(H)\Big) &\geq (1-\varepsilon^{1/2}) \sum_{H\in\binom{G}{F}}\psi'(H) \\
  & \ge \sum_{H\in\binom{G}{F}}\psi'(H) - \varepsilon^{1/2} \tfrac{1}{3}\e(G) \\
  & \ge \nu_F^*(G) - 3\varepsilon sn^2 - \tfrac{1}{6}\varepsilon^{1/2} sn^2.
\end{align*}
For the second inequality we used (\ref{eq:fractional packing constraint}) and for the third one we used (\ref{eq: frac packing}).
This implies that there exists an equitable partition $\mathcal{Q}$ that is $\varepsilon'$-regular (in which all pairs $(U_i,U_j)$ are $\varepsilon'$-regular) and that the fractional packing $\psi''$ that corresponds to $Q$ satisfies the inequality:
$$
   \sum_{H\in\binom{G}{F}}\psi''(H) \ge \nu_F^*(G) - 3\varepsilon sn^2 - \varepsilon^{1/2} sn^2.
$$
We now fix such a regular partition $\mathcal{Q}$.

Let $R$ denote the quotient graph $G'/\mathcal{Q}$, that is, $V(R)=\{1,\dots,M\}$, and $ij\in E(R)$ if $(U_i,U_j)$ is $\varepsilon'$-regular in $G'$ and $d_{G'}(U_i,U_j) > 0$. We define a fractional $F$-packing $\psi_R$ on $R$, such that for every copy $F_0$ of $F$ in $R$, if $F_0 = \{i_1j_1,\dots,i_r j_r\}$ ($r=|F_0|=|E(F)|$), define $\psi_R(F_0)$ be the total $\psi''$-weight of (uniform) copies of $F$ in $G'[U_{i_1},U_{j_1}]\cup \cdots \cup G'[U_{i_r},U_{j_r}]$ divided by $n^2/M^2$. Let $\mathcal{F}_{ij}$ be the collection of copies of $F$ in $R$ which contain the edge $ij$.

Next, we consider the original multigraph $G$. For every edge $e$ between $U_i,U_j$ in $G$, we color the edge $e$ with $F_0\in \mathcal{F}_{ij}$ that is chosen at random, with probability $\psi_R(F_0)/d_{ij}$, where $d_{ij}$ is the edge density between $U_i$ and $U_j$ in $G'$. Note that some of the edges of $G$ may stay uncolored; this may be because the edge is not used in $G'$, or because $\mathcal{F}_{ij}$ could be empty.

Now we fix a copy $F_0$ of $F$ in $R$ with $\psi_R(F_0) > 1/M^{k-1}$, and assume its vertex-set is $\{1,\dots,k\}$. Let $L = G'[U_{i_1},U_{j_1}]\cup \cdots \cup G'[U_{i_r},U_{j_r}] \subseteq G'$.
By a standard application of the counting lemma (see, e.g.\ Lemma 15 in \cite{HaxellRodl01}), there is a spanning subgraph $L'$ of $L$, such that for every edge $e\in E(L')$, if $e\in E(U_i,U_j)$, then the number of uniform copies of $F$ in $L$ that contain $e$ is in the interval
\[
\bigg[\Big(\frac{n}{M}\Big)^{k-2} \frac{\prod_{ab\in F_0}d_{ab}}{d_{ij}} - \varepsilon' \Big(\frac{n}{M}\Big)^{k-2},\Big(\frac{n}{M}\Big)^{k-2}\frac{\prod_{ab\in F_0}d_{ab}}{d_{ij}} + \varepsilon' \Big(\frac{n}{M}\Big)^{k-2}\bigg].
\]

Let $L_{F_0}$ be the spanning subgraph of $L'$ containing edges which are colored by $F_0$. Then $L_{F_0}$ is a random subgraph of $L'$. For every edge $e\in E(L_{F_0})$, let $\xi_{F_0}(e)$ be the number of copies of $F$ in $L_{F_0}$ that contain $e$. Using Chernoff's bound, we conclude that a.a.s.
\[
\bigg| \xi_{F_0}(e) - \Big(\frac{n}{M}\Big)^{k-2}\psi_R(F_0)^{r-1} \bigg| < \varepsilon' \Big(\frac{n}{M}\Big)^{k-2} \psi_R(F_0)^{r-1}.
\]
The proof of the above inequality follows the proof of Lemma 3.1 in \cite{Yuster05}, and we omit the details here.

On the other hand, for any edge $ij\in F_0$ in $R$, the expected number of edges between $U_i$ and $U_j$ that belongs to $L_{F_0}$ is
\[
 d_{ij}\frac{\psi_R(F_0)}{d_{ij}}\frac{n^2}{M^2} = \psi_R(F_0)\frac{n^2}{M^2}.
\]
Thus by summing over all edges of $F_0$ and by Chernoff's bound, we also conclude that 
$|E(L_{F_0})| \geq (1-2\varepsilon') r\psi_R(F_0)\frac{n^2}{M^2}$ a.a.s.
We now fix a coloring that satisfies all the above inequalities.

Let us consider an $r$-uniform hypergraph $\mathcal{H}_{F_0}$ whose vertices are the edges in $L_{F_0}$, and the hyperedges of $\mathcal{H}_{F_0}$ are the uniform copies of $F$ in $L_{F_0}$. Note that $\mathcal{H}_{F_0}$ satisfies the conditions of Theorem~\ref{thm:hypergraphmatching}. Therefore, $\mathcal{H}_{F_0}$ has an $\varepsilon'$-near perfect matching. In particular, the matching has size at least
\[
(1-\varepsilon')\frac{|E(\mathcal{H}_{F_0})|}{r} \geq (1-3\varepsilon')\frac{\psi_R(F_0)n^2}{M^2}.
\]
By taking all such $F_0$ in $R$, we will get an $F$-packing $\phi$ in $G$.
For copies of $F$ in $R$ that have weight less than $1/M^{k-1}$, the total contribution of such copies of $F$ is at most
\[
M^k\frac{1}{M^{k-1}}\Big(\frac{n}{M}\Big)^2 < \varepsilon n^2.
\]
Thus by \eqref{eq: frac packing}, we have $\nu_F^*(G)-\nu_F(G) \leq \nu_F^*(G)-\sum_{H\in\binom{G}{F}}\phi(H) < 100sk^2\sqrt\varepsilon n^2$. 
The theorem follows by taking $\varepsilon\to 0$.
\end{proof}

\section{Algorithms and analysis}%%%
\label{sect:Algorithms}

In this section, we will show how to use Theorem \ref{thm:quasimultigenus} to find a near minimal embedding of a large dense graph from the algorithmic point of view. Section \ref{sect:Algorithms-genus} provides a deterministic EPTAS for the problem {\sc Approximating Genus Dense}. In the rest of the paper (\ref{sect:Algorithms-embedding}), we will discuss how to construct a near minimum genus embedding for the problem {\sc Approximate Genus Embedding Dense}.

\subsection{Genus of dense graphs}
\label{sect:Algorithms-genus}

Suppose $G$ has $n$ vertices and suppose we have an equitable partition $\mathcal{P}$ of $V(G)$ into $K$ parts, $V(G)=V_1\sqcup\dots\sqcup V_K$. We use $H=G/\mathcal{P}$ to denote the corresponding edge-weighted quotient graph with $K$ vertices, $V(H)=\{v_1,\dots,v_K\}$. The edge between $v_i$ and $v_j$ in $H$ has weight equal to the edge density $d_{ij}=d(V_i,V_j)$ between $V_i$ and $V_j$ in $G$.

\begin{theorem}
\label{thm:genus}
For every $\varepsilon>0$, there exist a positive number $\varepsilon^\prime>0$ and a natural number $n_0$ such that for every graph $G$ with $n=|V(G)|\ge n_0$ vertices and at least $\varepsilon n^2$ edges, there is 
an $\varepsilon'$-regular partition $\mathcal{P}$ with $K$ parts as defined in Theorem \ref{thm:regularity}, such that the following holds. Let $H=G/\mathcal{P}$ be the quotient graph, from which we remove all edges between irregular pairs, and all edges whose weight is less than $\varepsilon/10$, and let $\nu^*(H)$ be the optimal value of the linear program (\ref{lp}). Then
\[
(1-\varepsilon)\frac{\e(G)-\nu^*(H)n_1^2}{4} \leq \g(G) \leq (1+\varepsilon)\frac{\e(G)-\nu^*(H)n_1^2}{4},
\]
and
\[
(1-\varepsilon)\frac{\e(G)-\nu^*(H)n_1^2}{2} \leq \ng(G) \leq (1+\varepsilon)\frac{\e(G)-\nu^*(H)n_1^2}{2},
\]
where $n_1=n/K$.
\end{theorem}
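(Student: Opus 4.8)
The plan is to derive this from the Regularity Lemma together with Theorem~\ref{thm:quasimultigenus}; the statement is essentially the ``regularized'' form of that theorem, with the Regularity Lemma supplying exactly the multipartite quasirandom structure that Theorem~\ref{thm:quasimultigenus} takes as input.

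First I would fix an $m=m(\varepsilon)$ that is large and an $\varepsilon'=\varepsilon'(\varepsilon)$ that is small (both pinned down only at the end) and apply the algorithmic Regularity Lemma (Theorem~\ref{thm:algpartition}, with Theorem~\ref{thm:regularity} as a fallback) to obtain an equitable $\varepsilon'$-regular partition $\mathcal{P}=\{V_1,\dots,V_K\}$ with $m\le K\le s(m,\varepsilon')$. Collect into a garbage graph $G_0$ all edges inside the parts $G[V_i]$ and all edges in the at most $\varepsilon' K^2$ irregular pairs, so that $\e(G_0)\le \tfrac12 n^2/K+\varepsilon' n^2$, which is negligible once $K$ is large and $\varepsilon'$ small. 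Form the quotient graph $H$ exactly as in the statement (keeping only the regular pairs whose density is at least $\varepsilon/10$), let $P=(d_{ij})$ be the associated weight matrix, and let $G'$ be $G\setminus E(G_0)$ restricted to those surviving pairs. Using the equivalence between $\varepsilon'$-regularity of a pair and small bipartite cut distance (Section~\ref{sec:cut metric}) together with additivity of the cut norm over the $\binom{K}{2}$ pairs, I would check that $G'\in\Q(n_1^{(K)},P,c\varepsilon')$ for an absolute constant $c$, where $n_1=n/K$ and every nonzero entry of $P$ is at least $\varepsilon/10$.

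Next I would apply Theorem~\ref{thm:quasimultigenus} to $G'$ with the substitutions $m\leftarrow K$, $c_0\leftarrow \varepsilon/10$, $\varepsilon_0\leftarrow\varepsilon$, choosing $\varepsilon'$ small enough that $c\varepsilon'$ lies below the quasirandomness parameter required there. Since $H$ is the quotient graph of $K(n_1^{(K)},P)$, the quantity $\nu(H)$ in the statement is exactly the linear-program optimum of that theorem, and one gets $\g(G')$ (and $\ng(G')$) in the claimed window up to the additive terms $\tfrac12 nK^3$ (resp.\ $nK^3$) and the built-in slack of size $\tfrac\varepsilon8(\e(G')-\nu(H)n_1^2)$. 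To transfer to $G$: for the upper bound I would embed $G'$ as above, embed each discarded low-density pair quadrangularly via Theorem~\ref{thm:quasibiembed} when its density exceeds $4(c\varepsilon')^{1/16}$ and add the remaining handful of edges one by one as handles, then identify the $O(K^2)$ copies of each vertex at total cost $O(nK^2)$; for the lower bound I would rerun the Euler-formula argument from the proof of Theorem~\ref{thm:quasimultigenus} directly on a minimum genus embedding of $G\setminus E(G_0)$, bounding its triangular faces by Lemma~\ref{lem:packing triangles} (so that $f_3\le 2(1+\tfrac\varepsilon{60})\nu(H)n_1^2$ up to the few triangles supported on discarded pairs) and then using $\g(G)\ge\g(G\setminus E(G_0))$. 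In both directions I would write $\e(G')=\e(G)-\e(E(G)\setminus E(G'))$ and use $\e(E(G)\setminus E(G'))\le \varepsilon' n^2+\tfrac12 n^2/K+\tfrac\varepsilon{20}n^2$ together with the estimate $\e(G)-\nu(H)n_1^2\ge\tfrac12\e(G)\ge\tfrac12\varepsilon n^2$ (from $\nu(H)\le\tfrac13\sum_{i<j}d_{ij}$ and $\e(G)\approx\sum_{i<j}d_{ij}n_1^2$) to absorb every additive discrepancy into the multiplicative $(1\pm\varepsilon)$ window, after choosing, in this order, $\varepsilon'$ small, then $m$ (hence $K$) large, then $n_0$ large, each depending on $\varepsilon$ only. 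The non-orientable case is identical, using the factor-$2$ versions throughout.

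The main obstacle is precisely this last bookkeeping. Several error contributions are additive --- the garbage $G_0$, the edges in pairs below the $\varepsilon/10$ cutoff, the $\tfrac12 nK^3$ vertex-identification term, and the slacks already inside Theorem~\ref{thm:quasimultigenus} --- and one must verify that their sum fits inside $\varepsilon\cdot\tfrac14(\e(G)-\nu(H)n_1^2)$. The genuinely delicate point is the low-density pairs: they are counted in $\e(G)$ but excluded from $H$, so one has to argue that they contribute at rate essentially $\tfrac14$ to the genus on both sides while contributing only negligibly to the triangle packing (here the bounded fractional packing estimate, Theorem~\ref{thm: packing weighted graph}, controls the gap between the LP value and the true packing). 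This chain of constraints is exactly what forces the layered, fast-growing dependence of $n_0$ on $\varepsilon$.
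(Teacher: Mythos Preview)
Your high-level plan---regularize, pass to the quotient, invoke Theorem~\ref{thm:quasimultigenus}---is the right picture, but the direct black-box application of Theorem~\ref{thm:quasimultigenus} has a circularity that you do not resolve. The quasirandomness threshold in Theorem~\ref{thm:quasimultigenus} is $\varepsilon'_{\mathrm{qr}}=\min\{\tfrac{1}{60}\varepsilon m^{-2},(\tfrac{c_1}{4})^{16}/10\}$ with $c_1=\varepsilon_0 c_0/m^3$; with $m\leftarrow K$ this is of order $K^{-48}$. You then need the regularity parameter to satisfy $c\varepsilon'\le \varepsilon'_{\mathrm{qr}}(K)$. But $K$ can be as large as $s(m_{\mathrm{reg}},\varepsilon')$, which by Gowers is a tower in $1/\varepsilon'$, so $\varepsilon'_{\mathrm{qr}}(K)$ is the reciprocal of a tower while $c\varepsilon'$ is only linear in $\varepsilon'$; no choice of $\varepsilon'$ makes the inequality hold uniformly over the admissible $K$. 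Your stated order of choices (``$\varepsilon'$ small, then $m$ (hence $K$) large'') does not escape this: once $\varepsilon'$ is fixed, $K$ is a function of the input graph and can sit at the top of the tower.

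The paper breaks this loop by \emph{not} applying Theorem~\ref{thm:quasimultigenus} directly. Instead it discretizes the quotient: set $\varepsilon_1=\varepsilon/10$, replace each weight $d_{ij}$ by the integer $m_{ij}=\lfloor d_{ij}/\varepsilon_1\rfloor$, and work with the multigraph $H^*$ of multiplicity at most $1/\varepsilon_1$. Theorem~\ref{thm: packing weighted graph} (the Haxell--R\"odl theorem for bounded-multiplicity multigraphs) then gives an \emph{integral} triangle packing in $H^*$ with $\nu^*_{K_3}(H^*)-\nu_{K_3}(H^*)<\varepsilon' K^2$, and this integral packing dictates the edge-splitting: every bipartite or tripartite piece now has density in $[\varepsilon_1,2\varepsilon_1)$, a constant depending only on $\varepsilon$. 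Consequently the quasirandomness needed for Theorems~\ref{thm:quasibigenus} and~\ref{thm:quasitrigenus} is $\approx(\varepsilon_1/4)^{16}$, independent of $K$, and one may simply take $\varepsilon'<(\varepsilon_1/4)^{17}$ once and for all. Your proposal mentions Theorem~\ref{thm: packing weighted graph} only as a side remark about low-density pairs; in the paper it is the structural device that decouples the regularity parameter from $K$. (Your separate treatment of low-density pairs is also more elaborate than necessary: the paper just deletes those edges and uses the $1$-Lipschitz property of the genus.)
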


\begin{proof}
Let $\varepsilon_1 = \varepsilon/10$ and $\varepsilon'< (\varepsilon_1/4)^{17}$. Apply the  regularity lemma with respect to $\varepsilon^\prime$ to $G$.
Let $K>1/\varepsilon'$ be the number of parts in the resulting $\varepsilon'$-regular partition $\mathcal{P}=V_1\sqcup\dots\sqcup V_K$.
We remove all the edges in each $G[V_i]$ for every $i\in[K]$, the edges between irregular pairs, and the edges between pairs of edge density smaller than $\varepsilon_1$. The number of edges that were removed is at most
\[
K\cdot \frac{1}{2}\Big(\frac{n}{K}\Big)^2+\varepsilon'K^2\Big(\frac{n}{K}\Big)^2+\varepsilon_1 K^2\Big(\frac{n}{K}\Big)^2\leq 2\varepsilon_1 n^2.
\]
Let $G'$ be the resulting graph. 
Since the genus is edge-Lipschitz (adding or deleting an edge changes the genus by at most 1), we have that
\begin{equation}
    0\leq \g(G)-\g(G')\leq 2\varepsilon_1 n^2.
    \label{eq:star6}
\end{equation}

For the edge-weighted quotient graph $H$ of order $K$, the linear program (\ref{lp}) yields a real number $\nu^*(H)$. Also, for each pair $(V_i,V_j)$ in $\mathcal{P}$ of $V(G)$, let $d_{ij}$ be the edge density between $V_i$ and $V_j$ in $G$, and let $m_{ij}$ be the largest integer such that $m_{ij}\varepsilon_1\leq d_{ij}$. Let $H^*$ be a multigraph such that $V(H^*)=V(H)$, and the edge multiplicity between vertices $i$ and $j$ is $m_{ij}$. Note that the edge multiplicity of $H^*$ is upper bounded by $m=1/\varepsilon_1$. By Theorem~\ref{thm: packing weighted graph} we have
\[
\nu^*_{K_3}(H^*)-\nu_{K_3}(H^*)<\varepsilon' K^2
\]
if $n$ is sufficiently large, say $n\ge n_1$. The bound $n_1$ depends on $\varepsilon'$ and $s$, so it only depends on $\varepsilon$ after all.

By the construction of $H^*$, we also get $\nu(H)-\nu^*_{K_3}(H^*)/m<\varepsilon_1 K^2$. Let $\Psi: \binom{H^*}{K_3}\to \{0,1\}$ be the packing corresponding to $\nu_{K_3}(H^*)$. Given $i, j\in V(H^*)$, let $\mathcal{T}_{ij}$ be the collection of triangles in the support set of $\Psi$ that contains $ij$, and let $t_{ij}=|\mathcal{T}_{ij}|$. Following the proof of Theorem \ref{thm:quasimultigenus}, we partition the edge set $E(V_i,V_j)$ in $G'$ randomly into $t_{ij}+1$ parts, with probabilities $1/m_{ij},\dots,1/m_{ij},(m_{ij}-t_{ij})/m_{ij}$, respectively. Note that by blowing up $H^*$, this random edge-partition as well as $\Psi$ give us a decomposition of $G'$ into quasirandom bipartite graphs and quasirandom tripartite graphs. In order to apply the same proof as in Theorem \ref{thm:quasimultigenus}, it remains to check the quasirandom graph satisfies the condition required in Theorem~\ref{thm:quasibigenus} and Theorem~\ref{thm:quasitrigenus}. Since $(V_i,V_j)$ is $\varepsilon'$-regular, by Lemma~\ref{lem:quasipartition}, the bipartite graphs and tripartite graphs we obtained are $\varepsilon'/\varepsilon_1$-quasirandom.
By applying the same proof we used in proving Theorem~\ref{thm:quasimultigenus}, we conclude that
\[
  (1-\varepsilon')\frac{\e(G)-\nu^*(H)n_1^2}{4}\leq\g(G) < 
  (1+\varepsilon')\frac{\e(G)-\nu^*(H)n_1^2}{4} + \tfrac{1}{2}n_1K^3.
\]
Note that $K$ only depends on $\varepsilon'$, by the choice of $\varepsilon'$, the second term $\tfrac{1}{2}n_1K^3$ is linear on $n_1$, and it is absorbed by the dominating term by replacing $\varepsilon'$ to $\varepsilon$. Note that this yields another lower bound $n_0'$ on $n$. By taking $n_0 = \max\{n_1,n_0'\}$, the theorem follows.
\end{proof}

With all tools in hand, we obtain an algorithm for {\sc Approximating Genus Dense}, whose running time is quadratic. Given $\varepsilon>0$ and a graph $G$ of order $n$, we do the following:\medskip

{\sc Step 0.} Let $n_0=n_0(\varepsilon)$ be the constant from Theorem \ref{thm:genus}. If $n=|V(G)|<n_0$, then determine $g(G)$ by using some known algorithm for the genus of graphs, and then stop. Otherwise proceed with Step 1.\medskip

{\sc Step 1.} Let $\tau=3\varepsilon^\prime/2$, where $\varepsilon^\prime$ is defined in Theorem \ref{thm:genus}. Pick $\alpha=1/2$, $m=4/\varepsilon$ and apply the algorithm in Theorem \ref{thm:algpartition} with integer $k$ taking values from $m
M$ to $s(\tau,M)$. Then the algorithm will output an $\varepsilon^\prime$-regular partition into $K$ parts, where $M\leq K\leq s(\tau,M)$.\medskip

{\sc Step 2.} Consider the quotient graph $H=G/\mathcal{P}$. Solve the linear program (\ref{lp}) on $H$ to obtain $\nu^*(H)$.\medskip

{\sc Step 3.} Output $g=\tfrac{1}{4}(1+\varepsilon)(\e(G)-\nu^*(H)n_1^2)$, where $n_1=n/K$.

\begin{corollary}
The problem {\sc Approximating Genus Dense} can be solved in $O(f(\varepsilon) n^2)$ time, where $n$ is the order of the input graph and $f:\RR^+\to\RR^+$ is an explicit function.
\end{corollary}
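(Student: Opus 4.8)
The plan is to check that the four-step procedure displayed above is correct and runs within the claimed bound; the substantive mathematics is already packaged in Theorem \ref{thm:genus} (which itself rests on Theorems \ref{thm:quasimultigenus} and \ref{thm: packing weighted graph}), so what remains is a bookkeeping of parameters together with the handling of degenerate inputs. Fix the target accuracy $\varepsilon\in(0,1]$ and set the internal parameter $\varepsilon_0:=\varepsilon/8$, to be substituted for $\varepsilon$ in Theorem \ref{thm:genus}; let $\varepsilon'=\varepsilon'(\varepsilon_0)$ and $n_0=n_0(\varepsilon_0)$ be the quantities that theorem furnishes, where $n_0$ will be enlarged finitely often below. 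First I would dispose of two easy regimes. If $n<n_0$, compute $\g(G)$ exactly by any finite method (e.g.\ minimising the genus over all rotation systems of $G$, or iterating Mohar's linear-time embeddability test \cite{Mo96,Mo99} over $h=0,1,2,\dots$); since $n$ is then bounded by a function of $\varepsilon$, this costs at most $F_1(\varepsilon)$ time, and we output $g=\g(G)$ (with either conclusion flag, both being trivially valid). If $n\ge n_0$ but $\e(G)\le\varepsilon_0 n^2$, output $g=\lceil\tfrac12\e(G)\rceil$: reading Euler's formula off any $2$-cell embedding of each component of $G$ gives $\g(G)\le\tfrac12\e(G)$, hence $\g(G)\le g$ since $\g(G)$ is an integer, while $g\le\tfrac12\e(G)+1\le\tfrac{\varepsilon}{16}n^2+1<\varepsilon n^2$ once $n\ge n_0$, so we have certified the alternative $\g(G)\le g<\varepsilon n^2$. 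From now on assume $n\ge n_0$ and $\e(G)>\varepsilon_0 n^2$.

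For the main regime I would first build an $\varepsilon'$-regular partition. Choose $m=m(\varepsilon)$ large enough to meet the hypotheses used in the proof of Theorem \ref{thm:genus} (in particular $m>4/\varepsilon_0$), pick $\alpha=\tfrac12$, and choose $\tau>0$ small enough that $(1+\alpha)\tau\le\varepsilon'$. By Szemer\'edi's Regularity Lemma (Theorem \ref{thm:regularity}), $G$ admits a $\tau$-regular partition into $K_0$ parts for some $m\le K_0\le s(m,\tau)$; running the algorithm of Theorem \ref{thm:algpartition} with parameters $\tau$, $\alpha$ for each part-count $k\in\{m,\dots,s(m,\tau)\}$, at least the run with $k=K_0$ produces a $(1+\alpha)\tau$-regular, hence $\varepsilon'$-regular, equitable partition $\mathcal P=V_1\sqcup\cdots\sqcup V_K$, and we fix one such partition. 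This costs $s(m,\tau)$ calls, each of time $O_{\tau,\alpha,k}(n^2)$, so at most $F_2(\varepsilon)\,n^2$ altogether. Reading $G$ once more, compute $\e(G)$ and all densities $d_{ij}=d(V_i,V_j)$ in $O(n^2)$ time, form the quotient graph $H=G/\mathcal P$, delete from $H$ all edges between irregular pairs and all edges of weight below $\varepsilon_0/10$, and solve the linear program \eqref{lp} on $H$; since $H$ has at most $K\le s(m,\tau)$ vertices, this linear program has size bounded by a function of $\varepsilon$ only and is solved in $F_3(\varepsilon)$ time. Finally output $g:=\bigl\lceil(1+\varepsilon_0)\tfrac14\bigl(\e(G)-\nu(H)n_1^2\bigr)\bigr\rceil$, where $n_1=n/K$ (as throughout, rounding of $n/K$ is immaterial, exactly as in Theorem \ref{thm:genus}).

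Correctness in this regime is where Theorem \ref{thm:genus} enters: its proof uses no property of the partition beyond $\varepsilon'$-regularity, so it applies to the $\mathcal P$ just built and gives, with $A:=\tfrac14(\e(G)-\nu(H)n_1^2)$, the two-sided bound $(1-\varepsilon_0)A\le\g(G)\le(1+\varepsilon_0)A$. Then $\g(G)\le(1+\varepsilon_0)A\le\lceil(1+\varepsilon_0)A\rceil=g$, which is the first required inequality. For the second, $A\le\g(G)/(1-\varepsilon_0)$, so $g<(1+\varepsilon_0)A+1\le\tfrac{1+\varepsilon_0}{1-\varepsilon_0}\g(G)+1\le\bigl(1+\tfrac{\varepsilon}{2}\bigr)\g(G)+1$, using $\varepsilon_0=\varepsilon/8$ and $\varepsilon\le1$. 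It thus suffices to know that $\g(G)\ge2/\varepsilon$, and this is exactly where the density hypothesis is used: each triangle of $H$ meets three of its edges, so $\nu(H)\le\tfrac13\sum_{i<j}d_{ij}$, and since the edges of $G$ deleted in forming $H$ number less than $\tfrac15\varepsilon_0 n^2$ this forces $\e(G)-\nu(H)n_1^2\ge\tfrac12\e(G)$ for $n$ large; the lower bound of Theorem \ref{thm:genus} then yields $\g(G)\ge\tfrac{1-\varepsilon_0}{8}\e(G)\ge\tfrac{1}{16}\varepsilon_0 n^2$, which exceeds $2/\varepsilon$ once $n_0$ is taken large enough. Hence $g<\bigl(1+\tfrac{\varepsilon}{2}\bigr)\g(G)+\tfrac{\varepsilon}{2}\g(G)=(1+\varepsilon)\g(G)$, as required. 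Over the three regimes the total running time is $O\bigl((F_1(\varepsilon)+F_2(\varepsilon)+F_3(\varepsilon)+C)\,n^2\bigr)=O(f(\varepsilon)\,n^2)$ with $f$ explicit (when $n<n_0$ the bound $F_1(\varepsilon)$ is already at most $F_1(\varepsilon)\,n^2$). I expect no deep obstacle in the corollary itself (the real work is in Theorems \ref{thm:genus} and \ref{thm: packing weighted graph}); the only point demanding care is this chain of parameter choices: $\varepsilon_0$ converting the two-sided estimate of Theorem \ref{thm:genus} into a one-sided $(1+\varepsilon)$-approximation, $\tau$ absorbing the regularity loss of Theorem \ref{thm:algpartition}, and $m$ (together with the density bound $\e(G)>\varepsilon_0 n^2$) both meeting the hypotheses of Theorem \ref{thm:genus} and rendering the integer rounding harmless.
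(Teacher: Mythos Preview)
Your proposal is correct and follows essentially the same approach as the paper: run the regularity algorithm of Theorem~\ref{thm:algpartition} to obtain an $\varepsilon'$-regular partition, solve the LP \eqref{lp} on the quotient, and invoke Theorem~\ref{thm:genus} for correctness. The paper in fact gives no separate proof of the corollary beyond listing the four steps, so your write-up is a careful fleshing-out of what the paper leaves implicit---in particular the handling of the degenerate regimes $n<n_0$ and $\e(G)\le\varepsilon_0 n^2$, the choice of $\tau$ so that the $(1+\alpha)\tau$-regular output of Theorem~\ref{thm:algpartition} is actually $\varepsilon'$-regular, the observation that the proof of Theorem~\ref{thm:genus} works for \emph{any} $\varepsilon'$-regular partition (not just the one whose existence is asserted), and the conversion of the two-sided bound $(1\pm\varepsilon_0)A$ into a one-sided $(1+\varepsilon)$-approximation together with integer rounding.
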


\subsection{Embeddings of dense graphs}
\label{sect:Algorithms-embedding}

Now, we turn to our algorithm for {\sc Approximate Genus Embedding Dense} where the added feature is to construct an embedding. Given $\varepsilon>0$ and a graph $G$ of order $n$, we can apply {\sc Approximating Genus Dense} to get $g$ such that $\g(G)\leq g\leq(1+\varepsilon)\g(G)$. We are going to construct a rotation system $\Pi$ of $G$, whose genus satisfies the same bound. Our algorithm proceeds as follows:\medskip

{\sc Step 1.} Apply {\sc Approximating Genus Dense} to obtain an $r(\varepsilon)$-regular partition $\mathcal{P}$ into $K$ parts, where $r(\varepsilon)$ is the value of $\varepsilon^\prime$ in Theorem \ref{thm:genus}. 
Determine the quotient graph $H$, and construct a multigraph $H^*$ with multiplicity bounded by $m$ as in the proof of Theorem~\ref{thm:genus}, and find a triangle packing of $H^*$ which is close to its fractional packing as given in Theorem~\ref{thm: packing weighted graph}. Let $\mathfrak{T}^*$ be the collection of triangles we obtained.
We randomly partition the graph $G$ into $\mathsf{b}=O(K^2)$ bipartite graphs $\mathcal{B}_{ij}$ and $\mathsf{t}=O(K^3)$ tripartite graphs $\mathcal{T}_{ijk}$ and a small leftover graph $G_0$ with less than $\tfrac{\varepsilon}{10}(n/K)^2$ edges. In $G_0$ we put all edges in irregular parts of the regular partition and all edges between parts whose density is too small. For the remaining edges we determine the quotient graph $H$ and compute the value $\nu_{K_3}(H^*)$ as well as the family of triangles $\mathfrak{T}^*$ in $H^*$ and their balanced edge densities $\psi(T)/m$, $T\in\mathfrak{T}^*$. Then we partition the edges randomly in order to obtain, for each $T_{ijk}\in\mathfrak{T}$, a quasirandom tripartite graph $\mathcal{T}_{ijk}$ between parts $V_i$, $V_j$, $V_k$ with edge-density $\psi(T_{ijk})/m$. For each edge $ij\in E(H)$, if $b_{ij} = d_{ij} - \sum_{T\ni ij, T\in\mathcal{T}}\psi(T)/m > 0$, we put the remaining edges to form a quasirandom bipartite  subgraph $\mathcal{B}_{ij}$ between parts $V_i,V_j$ with edge-density $b_{ij}$. With high probability, for any $\tau>0$, at least $(1-r(\varepsilon)-\tau)\mathsf{b}$ of the resulting bipartite graphs are quasirandom and at least $(1-r(\varepsilon)-\tau)\mathsf{t}$ tripartite graphs are quasirandom. Note that $r(\varepsilon)$ appears here because of the irregular pairs.\medskip

{\sc Step 2.} Let $t_1=n^{\frac{2-\varepsilon}{4-\varepsilon}}$. For every $1\leq i<j<k\leq K$, we partition the edge-set of the graph $\mathcal{T}_{ijk}$ into $t_1$ parts uniformly at random. These $t_1$ sets of edges give us $t_1$ graphs $^1\mathcal{T}_{ijk},\dots,{^{t_1}\mathcal{T}_{ijk}}$. For any $\tau>0$, with probability at least $(1-\tau)t_1$, the graphs are still quasirandom. For every $x\in[t_1]$, let $^xD_{ijk}$ be the corresponding digraph of $^{x}\mathcal{T}_{ijk}$, and let $^x\H_{ijk}$ be the hypergraph whose vertices are the arcs of $^{x}\mathcal{T}_{ijk}$ and whose edges are the directed 3-cycles in $^{x}\mathcal{T}_{ijk}$.\medskip

{\sc Step 3.} Let $t_2=n^{\frac{4-\varepsilon}{6-\varepsilon}}$. For every $1\leq i<j\leq K$, we partition the edge-set of the graph $\mathcal{B}_{ij}$ into $t_2$ parts uniformly at random. These $t_2$ sets of edges give us $t_2$ graphs $^1\mathcal{B}_{ij},\dots,{^{t_2}\mathcal{B}_{ij}}$. For any $\tau>0$, with probability at least $(1-\tau)t_2$, the graphs are still quasirandom. For every $y\in[t_2]$, let $^yD_{ij}$ be the corresponding digraph of $^{y}\mathcal{B}_{ij}$, and let $^y\H_{ij}$ be the 4-uniform hypergraph, whose vertices are the arcs of $^{x}\mathcal{B}_{ij}$ and whose edges are the directed 4-cycles in $^{x}\mathcal{B}_{ij}$. Let $\mathsf{h}$ be the total number of hypergraphs.\medskip

{\sc Step 4.} Apply random greedy algorithm \cite{RoTh96} on $3$-uniform hypergraphs $^x\H_{ijk}$ for every $x\in[t_1]$ and every $1\leq i<j<k\leq K$, and on $4$-uniform hypergraphs $^y\H_{ij}$ for every $y\in[t_2]$ and every $1\leq i<j\leq K$ for finding near-perfect matchings. Note that the expected running time here is still quadratic even though we need to run the algorithm $\Theta(t_1+t_2)$ times. This is because the running time is linear in the number of vertices of the hypergraph, which means it is linear in the number of edges in the corresponding digraphs $^xD_{ijk}$ and $^yD_{ij}$.
With high probability, we have $|V(^x\H_{ijk})| = O(\tfrac{n^2}{t_1})$ and $|V(^y\H_{ij})| = O(\tfrac{n^2}{t_2})$. Then for every $\tau>0$, the algorithm will output a $\tau$-near perfect matching in at least $(1-r(\varepsilon)-\tau)\mathsf{h}$ hypergraphs with high probability. Let $\mathfrak{M}$ be the set of hyperedges (triangles and $4$-cycles) such that for every $e\in\mathfrak{M}$, $e$ is output by the algorithm as an element of a $\tau$-near perfect matching in a hypergraph $\H$.\medskip

{\sc Step 5.} For each hypergraph $\H$ we defined in {\sc Step 4}, consider $\H^{-1}$. Delete all the edges contained in $\mathfrak{M}$ (with inverse direction) from $\H^{-1}$. By \cite[Theorem 3.3]{genus}, the resulting hypergraph still satisfies Conditions (1)--(3) in Theorem \ref{thm:hypergraphmatching}. Apply random greedy algorithm again. For every $\tau>0$, the algorithm will output a $\tau$-near perfect matching in at least $(1-r(\varepsilon)-\tau)\mathsf{h}$ hypergraphs with high probability. We also put edges in these near perfect matchings into $\mathfrak{M}$.\medskip

{\sc Step 6.} Output the rotation $\Pi=\{\pi_v\mid v\in V(G)\}$ which is constructed as follows.
Every hyperedge $C\in \mathfrak{M}$ corresponds to a 3-cycle or a 4-cycle in $G$. Let $V_C$ and $E_C$ be the vertex and the edge-set of that cycle. Now define
$$ F(v) = \{ C\in \mathfrak{M} \mid v\in V_C \}.$$
The elements in $\mathfrak{M}$ determine which pairs of edges should be consecutive in the local rotation around $v$.
Each arc of the corresponding digraph $D$ of $G$ is in at most one of the cycles $C\in \mathfrak{M}$ since the digraphs $^xD_{ijk}$ and $^yD_{ij}$ are edge-disjoint. Thus the cycles $C\in \mathfrak{M}$ are arc disjoint cycles in $D\cup D^{-1}$.
For each blossom created by $\mathfrak{M}$, we remove one of the cycles from $\mathfrak{M}$. As proved in the paper, there is only a small number of blossoms all together with high probability. Obtaining a blossom-free subset, we apply Lemma \ref{lem:rotation} (following its proof).
By Theorem \ref{thm:genus}, we have $(1+\varepsilon)\g(G)\geq\g(G,\Pi)$.

\bibliographystyle{abbrv}
\bibliography{reference}

\end{document}